\documentclass[reqno,11pt]{amsart}
\usepackage{amsmath,amssymb,mathrsfs,color}
\usepackage{graphicx}
\usepackage{epstopdf}
\usepackage{soul}
\usepackage{cases}
\usepackage{hyperref}
\hypersetup{hypertex=true,
	colorlinks=true,
	linkcolor=blue,
	anchorcolor=blue,
	citecolor=blue}
\usepackage[margin=1in]{geometry}
\usepackage{
indentfirst, latexsym, bm, enumerate,
wrapfig, adjustbox, subfigure, appendix,
float, bbding, xcolor, multirow, framed, lipsum, epsdice,
url, amsthm, booktabs, makecell, caption, algpseudocode, pifont,
balance, dcolumn, fontenc, extarrows, arydshln % els, titlesec, mathptmx, algorithm
}

\allowdisplaybreaks[4]
\usepackage{tikz}
\usetikzlibrary{arrows}
\usepackage{esint}%%%  $\fint$
\usepackage{mathtools}

\newtheorem{definition}{Definition}[section]
\newtheorem{theorem}{Theorem}[section]
\newtheorem{lemma}{Lemma}[section]
\newtheorem{proposition}{Proposition}[section]

\newtheorem{remark}{\bf{Remark}}[section]

\makeatletter
\newcommand{\rmnum}[1]{\romannumeral #1}
\newcommand{\Rmnum}[1]{\expandafter\@slowromancap\romannumeral #1@}
\makeatother

\numberwithin{equation}{section}

\renewcommand{\hl}[1]{#1}

\begin{document}

\title[Dissipative weak solutions to the 3D stochastic Euler equations]{Dissipative weak solutions to the three-dimensional stochastic Euler equations}

\author{Guanggan Chen}
\address{G. Chen: School of Mathematical Sciences,
Sichuan Normal University, Chengdu 610068, P. R. China}
\email{chenguanggan@hotmail.com}

\author{Wenhu Zhong}
\address{W. Zhong (Corresponding author): School of Mathematical Sciences,
Dalian University of Technology, Dalian 116024, P. R. China}
\email{zhongwenhu@dlut.edu.cn}

%\author{Zhenxin Liu}
%\address{Z. Liu (Corresponding author): School of Mathematical Sciences,
%Dalian University of Technology, Dalian 116024, P. R. China}
%\email{zxliu@dlut.edu.cn}

\date{\today}
%\date{August 31, 2021}
\subjclass[2020]{35Q31, 60H15, 76B03, 76F05}
\keywords{\hl{stochastic Euler equation, Kolmogorov 4/5 law, dissipative solutions}}

\begin{abstract}
This paper is concerned with the dissipative weak solutions to the three-dimensional stochastic Euler equations. For any $\gamma \in \left[ {0,\frac{1}{5}} \right)$, we establish the existence of $\gamma$-H\"{o}lder continuous weak solutions that exhibit a strict energy dissipation in expectation. For any $\gamma > \frac{1}{3}$, we prove that the three-dimensional stochastic Euler equations do not possess dissipative solutions satisfying the corresponding spatial $\gamma$-H\"{o}lder regularity. It is well known that the stochastic forcing inevitably gives rise to additional stochastic effects to the Reynolds stresses, which cannot be treated by the differential estimates in the classical convex integration scheme introduced by Buckmaster, De Lellis, Isett and Sz\'{e}kelyhidi ({\it{Ann. Math.}} \textbf{182}(1):127--172, 2015). To overcome this difficulty, we develop a novel material integration technique into the stochastic convex integration scheme, so that those stochastic Reynolds stresses are quantified.
\end{abstract}

\maketitle
\tableofcontents  % 在此处生成目录

\section{Introduction}
In this paper, we investigate the three-dimensional stochastic incompressible Euler equations
\begin{equation}
\left\{ {\begin{array}{*{20}{l}}
\mathrm{d} {u}  + \left( {{u} \cdot \nabla } \right){u} \ \mathrm{d} t + \nabla p \ \mathrm{d} t =  \mathrm{d} {\mathcal{W}},   & \mbox{on}\ \left[ {0,T} \right] \times {\mathbb{T}^3}, \\
\mathrm{div}\, {u} = 0 ,
\end{array}} \right.  \label{BouE1}
\end{equation}
where $T$ is a positive real number, and $\mathbb{T}^3$ is the three-dimensional torus. The velocity ${u}\left( {t,x} \right)$ is a $\mathbb{R}^3$-valued function on $\left[ {0,T} \right] \times {\mathbb{T}^3}$, and the pressure $p \left( {t , x} \right)$ is a $\mathbb{R}$-valued function on $ \left[ {0, T } \right] \times \mathbb{T}^3$. The stochastic Euler equations \eqref{BouE1} characterize the dynamic behavior of turbulent flows, where the Wiener process $\mathcal{W}\left( {t} \right)$ describes the stochastic effects providing an additional energy injection (see \cite{Zbl1537.35273,Zbl1555.60124}).
\par
An fundamental mechanism of turbulent flows is anomalous dissipation, which captures a striking phenomenon that the energy dissipation rate of flows remains non-zero even in the inviscid regime (see \cite{MR4595604,arXiv:2207.06833,Zbl1508.35069,Zbl1009.35062}). To understand the anomalous dissipation, Onsager \cite{MR0036116} investigated the mechanism of energy transfer across scales and heuristically proposed the celebrated conjecture: the three-dimensional deterministic Euler equations exhibit anomalous dissipation when the H\"{o}lder exponent of solutions is less than 1/3. The Onsager's conjecture identifies the critical regularity threshold for anomalous dissipation and reveals a connection between the spatial regularity of Euler flows and the persistence of energy transfer across small scales in turbulence (see \cite{arXiv:2212.08413,Zbl0818.35085}). This small-scale energy transfer is quantitatively characterized by the Kolmogorov 4/5 law.
\par
To construct dissipative weak solutions of the deterministic Euler equations, Shnirelman \cite{ShnirelmanA-1,ShnirelmanA-2} introduced a classical convex integration scheme, which was developed into the Nash iterative scheme by De Lellis and Székelyhidi \cite{Zbl1350.35146,Zbl1280.35103,Zbl1307.35205}. Combining the Nash iterative scheme with the Beltrami flows, Buckmaster, De Lellis, Isett and Sz\'{e}kelyhidi \cite{MR3302631,MR3374958} established the existence of dissipative weak solutions to the three-dimensional deterministic Euler equations. Furthermore, Buckmaster, De Lellis, Sz\'{e}kelyhidi, Vicol \cite{Zbl1480.35317}, Daneri \cite{Zbl1372.35221}, Isett \cite{Zbl1416.35194} and Novack, Vicol \cite{MR4601999} constructed the dissipative Onsager-subcritical solutions to the three-dimensional deterministic Euler equations. Together with the results of Constantin, E, Titi \cite{Zbl0818.35085} and Giri, Radu \cite{arXiv:2305.18105}, the Onsager's conjecture has been resolved for the deterministic Euler equations.
\par
However, the three-dimensional stochastic Euler equations \eqref{BouE1} are driven by the stochastic forcing whose temporal irregularity makes the material derivative of Reynolds stress ill-defined, so that the classical Nash iteration scheme is not directly applicable to Eq.\eqref {BouE1}. To overcome this difficulty, Hofmanov\'{a}, Lange, Pappalettera \cite{Zbl1537.35273}, L\"{u}, L\"{u}, Zhu \cite{arXiv:2505.06915} and Pappalettera, Triggiano \cite{arXiv:2511.21616} regularized the stochastic forcing through a Wong–Zakai approximation, so that their gluing procedure admits the key differential estimates analogous to those in the classical Nash iteration scheme. Nevertheless, their construction requires additional weighted trace-class conditions to control the stochastic contributions in the iterative procedure. This stochastic convex integration scheme does not directly address the challenges arising from the temporal irregularity and low spatial regularity of the stochastic forcing. To date, the existence of dissipative weak solutions to Eq.\eqref{BouE1} without altering the fundamental properties of the stochastic forcing remains an open problem.
\par
In this paper, our purpose is to construct dissipative weak solutions to the three-dimensional stochastic Euler equations \eqref{BouE1} without altering the fundamental properties of the Wiener process $\mathcal{W}\left( {t} \right)$. To overcome the difficulties of temporal irregularity, we develop a novel material integration technique into the stochastic convex integration scheme. Then, the stochastic contributions of $\mathcal{W}\left( {t} \right)$ to the Reynolds stress are quantified by It\^{o} stochastic integrals. Our principal result is presented as follows.
\par
\begin{theorem}\label{CTSE-O-C}
Let the Wiener process ${\mathcal{W}}\left( {t} \right)$ be in the collection $\mathscr{W}$ and satisfy
\begin{equation}\label{ansjf-SFFGG}
\mathrm{Tr}\, Q + \mathbb{E}\mathop {\sup }\limits_{t \in \left[ {0,T} \right]} \left\| {\mathcal{W}\left( t \right)} \right\|_{{\mathcal{C}^{\frac{2}{3}} }}^2 < + \infty ,
\end{equation}
where $Q$ is the covariance operator of ${\mathcal{W}}\left( {t} \right)$. Then, the following results hold.
\par
\vspace{1em}
\noindent$(\rmnum{1})$. For any $\gamma  \in \left[ {0,\frac{1}{5}} \right)$, there exists a stopping time $\mathfrak{t}$ such that Eq.\eqref{BouE1} possesses a probabilistically strong and analytically weak solution ${u }\left( t ,x \right) \in {L^\infty }\left( {\left[ {0, T \wedge {\mathfrak{t}}} \right];{\mathcal{H}}} \right)$ satisfying 
\begin{equation}
\mathop {\sup }\limits_{t \in \left[ {0,T \wedge \mathfrak{t}} \right]} {\left\| {u\left( t \right)} \right\|_{{\mathcal{C}^\gamma }}} \leqslant C\left( {\gamma } \right)   , \ \ \mathbb{P}\mbox{-a.s.} ,  \label{djfkshhgss}
\end{equation}
and the strict energy inequality
\begin{equation}\label{SHUDJ-NENH-B-SIDIS}
\mathbb{E} \left\| {u\left( t \wedge  \mathfrak{t} \right)} \right\|_\mathcal{H}^2 <  \mathbb{E}  \left\| {u\left( s  \wedge  \mathfrak{t} \right)} \right\|_\mathcal{H}^2  + \mathrm{Tr}\, Q \cdot \mathbb{E} \left( {t \wedge  \mathfrak{t} - s \wedge  \mathfrak{t}} \right)  , \ \ \forall \, 0 \leqslant s < t \leqslant T ;
\end{equation}
\par
\par
\noindent$(\rmnum{2})$. For any $\gamma  > \frac{1}{3}$, Eq.\eqref{BouE1} does not possess analytically weak solutions satisfying the regularity \eqref{djfkshhgss} and the strict energy inequality \eqref{SHUDJ-NENH-B-SIDIS}.
\end{theorem}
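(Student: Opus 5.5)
The plan is to prove the two parts separately. Part (ii) is a stochastic Constantin--E--Titi argument. Part (i) is a stochastic Nash iteration in the spirit of \cite{MR3302631,MR3374958}, adapted so that the rough forcing never has to be differentiated in time.

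\textbf{Part (ii).} Let $u$ be an analytically weak solution with $\sup_t\|u\|_{\mathcal{C}^\gamma}\leqslant C$ for some $\gamma>\frac13$. Mollify in space, $u_\ell=u*\varphi_\ell$. Then $u_\ell$ solves
\begin{equation*}
\mathrm{d}u_\ell+\mathrm{div}(u_\ell\otimes u_\ell)\,\mathrm{d}t+\nabla p_\ell\,\mathrm{d}t=\mathrm{d}\mathcal{W}_\ell+\mathrm{div}\,R_\ell\,\mathrm{d}t,
\end{equation*}
where $R_\ell=u_\ell\otimes u_\ell-(u\otimes u)_\ell$. Applying It\^o's formula to $\|u_\ell\|_{L^2}^2$ gives
\begin{equation*}
\|u_\ell(t)\|^2=\|u_\ell(s)\|^2+2\int_s^t\langle u_\ell,\mathrm{d}\mathcal{W}_\ell\rangle+\mathrm{Tr}\,Q_\ell\,(t-s)-2\int_s^t\langle R_\ell,\nabla u_\ell\rangle\,\mathrm{d}r.
\end{equation*}
The commutator estimate $\|R_\ell\|_{C^0}\lesssim \ell^{2\gamma}$ and $\|\nabla u_\ell\|_{C^0}\lesssim\ell^{\gamma-1}$ bound the last term by $C\ell^{3\gamma-1}\to0$. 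Localize with the stopping time, take expectations so the martingale term vanishes, and let $\ell\to0$. Using $\mathrm{Tr}\,Q_\ell\to\mathrm{Tr}\,Q$, this yields equality in the energy balance, which contradicts the strict inequality \eqref{SHUDJ-NENH-B-SIDIS}.

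\textbf{Part (i), setup.} First split $u=v+z$ with $z=\mathcal{W}$, or a Stokes-type linear part. The remainder $v$ then solves a random Euler system $\partial_tv+\mathrm{div}((v+z)\otimes(v+z))+\nabla p=0$ pathwise. Next define the stopping time $\mathfrak{t}$ as the first time $\|\mathcal{W}\|_{\mathcal{C}^{2/3}}$, or a suitable H\"older-in-time norm of it, exceeds a threshold $L$. Condition \eqref{ansjf-SFFGG} makes $\mathfrak{t}>0$ and keeps $\mathbb{P}(\mathfrak{t}<T)$ small.

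\textbf{Part (i), iteration.} Run an iteration $(v_q,\mathring{R}_q)$ with frequencies $\lambda_q=a^{b^q}$ and amplitudes $\delta_q=\lambda_q^{-2\beta}$. Each step consists of three operations:
\begin{enumerate}[(a)]
\item mollify $v_q$ and $z$ at scale $\ell$;
\item add a Beltrami perturbation $w_{q+1}$ with amplitude tuned to the energy profile $e(t)$;
\item estimate the new stress $\mathring{R}_{q+1}$, split into transport, Nash, oscillation and the stochastic terms $w\otimes z$, $z_\ell\otimes z-z\otimes z$.
\end{enumerate}

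\textbf{Main obstacle.} The transport error $(\partial_t+v_\ell\cdot\nabla)w$ requires a material derivative of the amplitudes. These amplitudes depend on $\mathring{R}_\ell$, which contains $z$, and $z$ is only $C^{1/2-}$ in time. The key replacement is a \emph{material integration}. Instead of differentiating, I would write the contribution of $z$ along the flow of $v_\ell$ as It\^o integrals $\int\Phi\,\mathrm{d}\mathcal{W}$ in Lagrangian coordinates. Their moments and pathwise sup bounds (via BDG and Kolmogorov continuity, under the stopping time) then replace the missing $\|D_t\mathring{R}_\ell\|$ estimate. This loss of temporal regularity forces the time scale $\tau_q$ to be chosen suboptimally, which is why the threshold drops from $\frac13$ to $\frac15$. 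The parameter bookkeeping should close for $\beta<\frac15$.

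\textbf{Conclusion of part (i).} The limit $v$ is then $\mathcal{C}^\gamma$ on $[0,T\wedge\mathfrak{t}]$, and the energy satisfies $\|v(t)\|^2$, or $\mathbb{E}\|u\|^2$, $= e(t)$. Choose $e$ to match $\mathbb{E}\|u(0)\|^2+\mathrm{Tr}\,Q\,\mathbb{E}\,t$ minus a strictly increasing function. After accounting for the cross term $\mathbb{E}\langle v,z\rangle$ via It\^o, this gives \eqref{SHUDJ-NENH-B-SIDIS}.
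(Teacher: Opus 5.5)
\textbf{Part (ii).} Your argument is correct, but it takes a different route from the paper's. The paper never proves energy equality. It derives a finite-time Kolmogorov $4/5$ law from the K\'arm\'an--Howarth--Monin relation, then plays $l^{-1}\mathbb{E}\int S_{\shortparallel}\,\mathrm{d}\tau\to-\frac45\varepsilon<0$ against the H\"older bound $\mathbb{E}\int\tilde S\,\mathrm{d}\tau\lesssim l^{3\gamma}$. Your Constantin--E--Titi route gives the energy balance with equality directly, which contradicts strictness. Its steps are It\^o on $\|u_\ell\|_{L^2}^2$, the $O(\ell^{3\gamma-1})$ commutator bound, localization at $\mathfrak{t}$ so the martingale term has zero mean, and $\mathrm{Tr}\,Q_\ell\to\mathrm{Tr}\,Q$. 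It is shorter and avoids the KHM identity and the small-scale limits of two-point correlations. The paper's route has the advantage of producing the $4/5$ law as a by-product.

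\textbf{Part (i).} Your architecture is the paper's: Beltrami perturbations on time slabs of length $\sim\mu_q^{-1}$, with a material-integration bound using pathwise H\"older control of the $\mathfrak{B}_q$ in place of $\|D_t\mathring R\|$. As written, however, the scheme does not close.

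(a) You transport only along $v_\ell$ and put $w\otimes z$ into the stress. With the full noise, $\mathcal{R}\,\mathrm{div}(w_{q-1}\otimes z+z\otimes w_{q-1})$ contains $\mathcal{R}[(z\cdot\nabla)w_{q-1}]$. This term has size comparable to $\delta_q^{1/2}\|z\|_{\mathcal{C}^0}$, far above the target $\delta_{q+1}\lambda_q^{-\alpha}$. The paper avoids this in two ways.
First, it transports $\Phi_l$ and $\bar R_l$ along $\bar u_{q-1}=\bar v_{q-1}+\bar{\mathcal{W}}_{q-1}$, so the noise already present sits inside the Nash error.
Second, it releases the noise one mode per step, $u_q=u_{q-1}+w_{q-1}+\mathcal{W}_q^{loc}$. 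The stopping times $\mathfrak{t}_q$ enforce $|\mathfrak{B}_q|\lesssim\delta_q^{1/2}\delta_{q+1}\lambda_q^{\frac23-2\alpha}$, so only this small new mode enters $R^{sto}_{q-1}$.

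(b) Your amplitudes depend on $\mathring R_\ell$ itself, so $\partial_t w$ does not even exist. The paper builds them from the transported stress $\bar R_l$, defined by $\mathfrak{D}_{t,q-1}\bar R_l=0$ and $\bar R_l(t_l)=\bar R_{q-1}(t_l)$. Then $\mathfrak{D}_{t,q-1}a_{\mathbf{k}l}=0$, and the temporal roughness is confined to $\bar R_{q-1}-\bar R_l$, which along the Lagrangian flow equals $\mathfrak{I}_{t,l,q-1}\bar R_{q-1}$. To control this you must carry $\|\mathfrak{I}_{t,l,q}\mathring R_q\|_{\mathcal{C}^0}\lesssim C(\omega)\mu_{q+1}^{-1}\delta_q^{1/2}\delta_{q+1}\lambda_q^{1-\alpha}$ as an inductive hypothesis. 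It must be re-proved for every piece of the new stress (Nash, oscillation, remainder, stochastic, compensative). That is where the It\^o integrals, and the random constant built from weighted seminorms $[\mathfrak{B}_q]_{\frac12-\alpha}$, actually appear.

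Two further points.

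Your account of $\frac15$ is wrong, and it undercuts your parameter claim. In the paper $\mu_q$ is the standard deterministic Buckmaster--De Lellis--Isett--Sz\'ekelyhidi time scale. The threshold $\frac15$ comes from the Nash error condition $\mu_q\delta_q^{1/2}\lambda_q^{2\alpha-1}\lesssim\delta_{q+1}$, which forces $c>\frac52$; the paper attributes the missing range $[\frac15,\frac13]$ to exactly this. So the noise must cost nothing in the exponent. If temporal roughness really forced a worse time scale on top of a Beltrami scheme, you would land strictly below $\frac15$.

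The energy step fails if you prescribe $\|v\|^2$ pathwise and fix the cross term ``via It\^o''. The term $2\mathbb{E}\langle v,z\rangle$ has formal drift $\mathbb{E}\langle u\otimes u,\nabla z\rangle$. This pairs $\mathcal{C}^\gamma$ with $\mathcal{C}^{-1/3}$, which has no meaning in general for $\gamma<\frac13$. Its increments are therefore uncontrolled, and strictness cannot be concluded. The paper instead puts $\|\hat{\mathcal W}_{q-1}\|_{L^2}^2$ and $2\int(\hat v_{q-1},\mathrm{d}\hat{\mathcal W}_{q-1})$ into the energy gap $\mathfrak{E}_{q-1,l}$, and uses the deterministic amplitude $\rho_{q-1,l}=\mathbb{E}\mathfrak{E}_{q-1,l}\sim\delta_q$. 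What gets prescribed is then $\mathbb{E}\|u(t\wedge\mathfrak{t})\|_{L^2}^2-\mathrm{Tr}\,Q\,\mathbb{E}(t\wedge\mathfrak{t})=\mathfrak{e}(t)$. This gives strictness as soon as $\mathfrak{e}$ is strictly decreasing.
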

\par
The proof of the first result in Theorem \ref{CTSE-O-C} is based on the stochastic convex integration scheme built upon the classical Beltrami flow. The overlap of perturbations stemming from the Beltrami flow at adjacent levels gives rise to additional cross-interaction errors. To control these interactions, we adopt the pressure compensation mechanism to absorb the gradient components and incorporate the residual terms into the Reynolds stress. However, the stochastic forcing introduces It\^{o}-integral contributions to the Reynolds stress. To preserve their stochastic structure, we control these terms through a material integration technique. Indeed, the increments induced by the temporal roughness of Brownian motions can be absorbed by the rapidly decaying inductive scale. This allows us to find a uniform, almost surely finite random constant controlling the material integration of Reynolds stress.
\par
The material integration technique provides a novel stochastic convex integration scheme that enables us to handle the temporal roughness of Brownian motions directly. In contrast to the Mikado-flow-based convex integration scheme of L\"{u}, L\"{u}, Zhu \cite{arXiv:2505.06915} and the temporal regularization of Kinra, Koley \cite{Zbl08194795}, our material integration technique avoids the sophisticated gluing procedure and Wong--Zakai approximations. As a consequence, the iterative framework permits a weaker spatial regularity assumption \eqref{ansjf-SFFGG} on the Wiener process $\mathcal{W}\left( {t} \right)$.
\par
To prove the second result in Theorem \ref{CTSE-O-C}, the key procedure is to establish the Kolmogorov 4/5 law for the three-dimensional stochastic Euler equations \eqref{BouE1}. We combine the K\'{a}rm\'{a}n-Howarth-Monin relationship and the strict energy inequality \eqref{SHUDJ-NENH-B-SIDIS} to obtain the Kolmogorov 4/5 law in the finite-time setting. In fact, the K\'{a}rm\'{a}n-Howarth-Monin relationship connects the averaged third-order longitudinal structure functions with the energy balance across scales. The strict energy inequality \eqref{SHUDJ-NENH-B-SIDIS} ensures a strictly positive dissipation rate. However, the finite-time formulation produces an additional term stemming from the two-point correlation functions at distinct times. We control this temporal term in the small-scale limit through the spatial translation continuity of the velocity. Finally, combining the Kolmogorov 4/5 law with a contradiction argument, we demonstrate that Eq.\eqref{BouE1} does not possess weak solutions with the regularity \eqref{djfkshhgss} and the dissipative property \eqref{SHUDJ-NENH-B-SIDIS}.
\par
We remark that Theorem \ref{CTSE-O-C} establishes the existence of dissipative weak solutions to Eq.\eqref{BouE1} with the spatial H\"{o}lder exponent $\gamma  \in \left[ {0,\frac{1}{5}} \right)$ and shows that no dissipative weak solutions to Eq.\eqref{BouE1} with the spatial H\"{o}lder exponent $\gamma  > \frac{1}{3}$ can be constructed. However, the range $\gamma\in \left[ {\frac{1}{5} , \frac{1}{3}} \right] $ is not attained by our construction, since the inductive estimate of Nash error fails to satisfy the parameter requirements of iteration. This limitation can be overcome by combining Mikado flows with gluing procedure, but the stochastic forcing requires additional regularity assumptions (see \cite{Zbl1537.35273,arXiv:2505.06915}). Our construction may provide insights into relaxing the regularity requirements on the stochastic forcing in the stochastic Mikado-flow convex integration scheme.
\par
The remainder of this paper is organized as follows. Section 2 introduces the functional setting, the stochastic forcing, and the definition of solutions to Eq. \eqref{BouE1}. Section 3 presents the stochastic Nash iteration scheme. Section 4 proves the existence of dissipative, probabilistically strong and analytically weak solutions in Theorem \ref{CTSE-O-C}. Section 5 establishes the Kolmogorov 4/5 law and proves the nonexistence of dissipative weak solutions in Theorem \ref{CTSE-O-C}.
%%%%%%%%%%%%%%%%%%%%%%%%%%%%%%%%%%%%%%%%%%%%%%
%%%%%%%%%%%%%%%%%%%%%%%%%%%%%%%%%%%%%%%%%%%%%%
%%%%%%%%%%%%%%%%%%%%%%%%%%%%%%%%%%%%%%%%%%%%%%
\section{Preliminaries}\label{Pre-w}
This section states basic setups of spaces, noises, stopping times and the definition of solutions.
%%%%%%%%%%%%%%%%%%%%%%%%%%%%%%%%%%%%%%%%%%%%%%
%%%%%%%%%%%%%%%%%%%%%%%%%%%%%%%%%%%%%%%%%%%%%%
%%%%%%%%%%%%%%%%%%%%%%%%%%%%%%%%%%%%%%%%%%%%%%
\subsection{Spaces and noises}\label{NOis-W}
For $m \in \mathbb{Z}^+ \cup \left\{ 0 \right\}$ and $k \geqslant 1$, let ${W^{m,k}}\left( {{\mathbb{T}^3}} \right)$ be the Sobolev space. Define the Hilbert space $\mathcal{H} \triangleq \left\{ {u\left( x \right) \in {W^{0,2}}\left( {{\mathbb{T}^3}} \right) \ \big{|}\  \nabla  \cdot u\left( x \right) = 0,\ \int_{{\mathbb{T}^3}} {u\left( {t,x} \right)} \, \mathrm{d}x = \mathbf{0}} \right\}$ with the inner product
\begin{equation*}
\left( {u,v} \right)_{\mathbb{T}^3} = \int_{{\mathbb{T}^3}} {u \left( x \right) \cdot v \left( x \right)} \, \mathrm{d}x,\ \ u\left( x \right) , v\left( x \right) \in \mathcal{H},
\end{equation*}
whose corresponding norm is denoted by $\left\|  \cdot  \right\|_{\mathcal{H}}$. Further define the space ${\mathcal{H}^1} \triangleq \mathcal{H} \cap {W^{1,2}}\left( {{\mathbb{T}^3}} \right)$ with the norm
\begin{equation*}
{\left\| u \right\|_{\mathcal{H}^1}} = {\left( {\int_{{\mathbb{T}^3}} {\sum\limits_{i = 1}^3 {{{\left| {\nabla {u^i}\left( x \right)} \right|}^2}} } \, \mathrm{d}x} \right)^{\frac{1}{2}}},\ \ u\left( x \right) \triangleq \left( {{u^1}\left( x \right),{u^2}\left( x \right),{u^3}\left( x \right)} \right) \in {\mathcal{H}^1} .
\end{equation*}
\par
Let ${\mathfrak{D}^{\sigma}}\left(  \cdot  \right)$ mean the differential derivative with order ${\sigma}$ of functions regarding the spatial variable $x$. For $N \in \mathbb{N}$, define the continuous function space ${\mathcal{C}^N}\left( {{\mathbb{T}^3}} \right)$, whose norm is
\begin{equation*}
{\left\| {u} \right\|_{{\mathcal{C}^N}}} = \sum\limits_{j = 0}^N {\mathop {\max }\limits_{\left| \sigma  \right| = j} \mathop {\sup }\limits_{x \in {\mathbb{T}^3}} {\left| {{\mathfrak{D}^{\sigma} }u\left( x \right)} \right|}},\ \ u\left( x \right) \in {\mathcal{C}^N}\left( {{\mathbb{T}^3}} \right) .
\end{equation*}
Let ${\mathcal{C}^{N + \xi }}\left( {{\mathbb{T}^3}} \right)$ with $\xi  \in \left( {0,1} \right)$ be the H\"{o}lder space consisting of continuous functions $u \left( x \right) \in  {\mathcal{C}^N}\left( {{\mathbb{T}^3}} \right)$ such that
\begin{equation*}
{\left\| {u} \right\|_{{\mathcal{C}^{N + \xi }}}} \triangleq {\left\| {u} \right\|_{{\mathcal{C}^N}}} +  {\left[ u \right]_{N + \xi }}  < + \infty ,
\end{equation*}
where ${\left[ u \right]_{N + \xi }} = \mathop {\max }\limits_{\left| \sigma  \right| = N} \mathop {\sup }\limits_{x \ne y } \frac{{\left| {{\mathfrak{D}^\sigma }u\left( x \right) - {\mathfrak{D}^\sigma }u\left( y \right)} \right|}}{{{{\left| {x - y} \right|}^\xi }}}$.
\par
\begin{lemma}[\cite{MR3302631}]\label{TSP-E-L-A-1}
Suppose $\left| {t - {t_0}} \right|{\left\| u \right\|_1} \lesssim 1$. Let $w{\left( t ,x \right)}$ be the solution of the transport equation
\begin{equation*}
\left\{ {\begin{array}{*{20}{l}}
{\partial _t}w + \left( {v \cdot \nabla } \right)w = u ,   &\mbox{on}\ \left[ {{t_0}, T } \right]  \times  \mathbb{T}^3, \\
w\left( {{t_0}} \right) = {w_0} ,
\end{array}} \right.
\end{equation*}
where $v\left( t, x \right)$ and $u\left( t, x \right)$ are the vector functions on $\left[ {{t_0}, T } \right]  \times   \mathbb{T}^3$. Then, for any $\alpha \in \left[ {0,1} \right)$ and $N \in {\mathbb{Z}^ + } $, we have
\begin{equation*}
{\left\| {w\left( t \right)} \right\|_{\mathcal{C}^ \alpha }} \lesssim {\left\| {{w_0}} \right\|_{\mathcal{C}^ \alpha } } + \int_{{t_0}}^t {{{\left\| {u\left( s  \right)} \right\|}_{\mathcal{C}^ \alpha } }} \, \mathrm{d}s  ,
\end{equation*}
and
\begin{equation*}
{\left[ {w\left( t \right)} \right]_{N + \alpha }} \lesssim  {\left[ {{w_0}} \right]_{N + \alpha }} + \left| {t - {t_0}} \right|{\left[ v \right]_{N + \alpha }}{\left[ {{w_0}} \right]_1}  + \int_{{t_0}}^t {{{\left[ {u\left( s  \right)} \right]}_{N + \alpha }} + \left| {t - s } \right|{{\left[ v \right]}_{N + \alpha }}{{\left[ {u\left( s  \right)} \right]}_1}} \, \mathrm{d}s  .
\end{equation*}
\end{lemma}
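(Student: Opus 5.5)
The plan is the method of characteristics. Let $X(t,x)$ be the flow of $v$, $\partial_t X(t,x) = v(t, X(t,x))$, $X(t_0,x)=x$. Since $v$ is divergence free, or at least Lipschitz, $X(t,\cdot)$ is a diffeomorphism of $\mathbb{T}^3$. Along characteristics we have $\frac{d}{dt} w(t,X(t,x)) = u(t,X(t,x))$, so
\begin{equation*}
w(t,X(t,x)) = w_0(x) + \int_{t_0}^t u(s,X(s,x))\,\mathrm{d}s .
\end{equation*}
Equivalently, with the back-to-labels map $\Phi_t = X(t,\cdot)^{-1}$,
\begin{equation*}
w(t) = w_0\circ\Phi_t + \int_{t_0}^t u(s)\circ X(s)\circ\Phi_t \,\mathrm{d}s .
\end{equation*}
Throughout I read the hypothesis as $|t-t_0|\,\|v\|_1\lesssim 1$; the factors $[v]_{N+\alpha}$ in the conclusion indicate that the $\|u\|_1$ written in the statement refers to the transporting field.

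First, the $\mathcal{C}^0$ bound is immediate from the representation. For the $\mathcal{C}^\alpha$ seminorm we need Lipschitz control of the flow maps. Gronwall gives
\begin{equation*}
\|\nabla X(t)\|_0,\ \|\nabla \Phi_t\|_0 \le e^{|t-t_0|[v]_1}\lesssim 1 .
\end{equation*}
More precisely, the composed map $X(s)\circ\Phi_t$ is the flow from time $t$ back to time $s$, so its Lipschitz constant is bounded by $e^{|t-s|[v]_1}\lesssim 1$. Composition then gives $[f\circ\psi]_\alpha\le [f]_\alpha\,\|\nabla\psi\|_0^\alpha$, and summing yields the first inequality.

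For the higher estimate I will prove a flow lemma by induction on $N$:
\begin{equation*}
[\nabla X(t)]_{N-1+\alpha}\lesssim |t-t_0|\,[v]_{N+\alpha},
\end{equation*}
and the same bound for $\Phi_t$, and for the flow from time $t$ to time $s$ with $|t-s|$ in place of $|t-t_0|$. To get it, differentiate $\partial_t\nabla X = (\nabla v)(X)\,\nabla X$ $N$ times in $x$. The top-order term is $(\nabla^N v)(X)(\nabla X)^{N}$, whose Hölder norm is $\lesssim [v]_{N+\alpha}$. The lower-order terms are products of derivatives of $v$ and of $X$; I will control them by interpolating between $[v]_1$ and $[v]_{N+\alpha}$, using $|t-t_0|[v]_1\lesssim1$. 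Gronwall then closes the induction. The inverse map $\Phi_t$ is handled either by the identity $\nabla\Phi_t=(\nabla X)^{-1}\circ\Phi_t$, or by noting that it solves the transport equation $\partial_t\Phi+(v\cdot\nabla)\Phi=0$.

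Next comes the composition (Faà di Bruno) estimate:
\begin{equation*}
[f\circ\psi]_{N+\alpha}\lesssim [f]_{N+\alpha}\|\nabla\psi\|_0^{N+\alpha}+[f]_1[\nabla\psi]_{N-1+\alpha}+(\text{interpolated cross terms}),
\end{equation*}
where the cross terms are absorbed into the two extremes by the standard interpolation $[g]_{j+\alpha'}\lesssim [g]_1^{1-\theta}[g]_{N+\alpha}^{\theta}$ together with Young's inequality. Applying it with $f=w_0$, $\psi=\Phi_t$ gives $[w_0]_{N+\alpha}+|t-t_0|[v]_{N+\alpha}[w_0]_1$. Applying it to each integrand with $f=u(s)$ and $\psi$ the flow from $t$ to $s$ gives $[u(s)]_{N+\alpha}+|t-s|[v]_{N+\alpha}[u(s)]_1$. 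Integrating in $s$ yields the claim.

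The main obstacle is the bookkeeping of the intermediate Faà di Bruno terms. They produce mixed products such as $[f]_{j}[\nabla\psi]_{k_1}\cdots[\nabla\psi]_{k_m}$, and it is exactly the hypothesis $|t-t_0|[v]_1\lesssim1$ together with interpolation that collapses these into the two endpoint terms. I would first do $N=1$ explicitly, then run the induction, citing the analogous computation in \cite{MR3302631}.
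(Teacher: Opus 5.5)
Your proposal is correct. Your reading of the hypothesis as $|t-t_0|\,\|v\|_1\lesssim 1$ is the right one: the $\|u\|_1$ in the statement is a misprint, and the cited source assumes a Lipschitz bound on the transporting field.

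The paper gives no proof of its own; it only quotes the lemma. So the relevant comparison is with the standard argument in the literature it cites, which is Eulerian for the higher-order bound. The $\mathcal{C}^0$ and $\mathcal{C}^\alpha$ estimates come from characteristics exactly as in your first step. For the higher bound, one then applies $\partial^\theta$, $|\theta|=N$, to the equation:
\begin{equation*}
(\partial_t+v\cdot\nabla)\partial^\theta w=\partial^\theta u+\sum_{\beta+\gamma=\theta,\ |\beta|\geqslant 1}c_{\beta\gamma}\,\partial^\beta v\cdot\nabla\partial^\gamma w .
\end{equation*}
The argument then runs as follows.
\begin{itemize}
\item Apply the $\mathcal{C}^\alpha$ bound to this transport equation.
\item Reduce every commutator term by interpolation to $[v]_1[w]_{N+\alpha}+[v]_{N+\alpha}[w]_1$.
\item Close by Gronwall, using $|t-t_0|[v]_1\lesssim 1$.
\item Insert $[w(\tau)]_1\lesssim [w_0]_1+\int_{t_0}^\tau [u(s)]_1\,\mathrm{d}s$ and exchange the order of integration; this is what produces the weight $(t-s)$.
\end{itemize}
That route needs a single product interpolation and no Fa\`a di Bruno bookkeeping. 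The flow-map bounds $[\nabla\Phi_t]_{N+\alpha}\lesssim |t-t_0|[v]_{N+1+\alpha}$ then follow as a corollary, by applying the lemma to $\Phi_t-x$ with source $-v$.

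Your route reverses the logic: you prove the flow bounds first by an ODE induction and then push them through a composition estimate. This costs more combinatorics. In both the flow induction and the cross terms of $[f\circ\psi]_{N+\alpha}$, you must check that the interpolation exponents sum to one and that each surplus factor $|t-s|$ pairs with a $[v]_1$. I checked this, and it does close. In return, your route yields the flow estimates directly and makes the $|t-s|$ weight transparent as the size of the derivatives of the $t\to s$ flow map.

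Two small points:
\begin{itemize}
\item When $N-1+\alpha=0$, state the flow bound for $\nabla X-I$, since $\|\nabla X\|_0$ itself is of order one. That case of the lemma is immediate from the Lipschitz bound.
\item For $\Phi_t$, do not use the transport-equation option $\partial_t\Phi+(v\cdot\nabla)\Phi=0$: estimating $\Phi_t$ that way invokes the lemma you are proving. Simply use that $\Phi_t$ is the flow of $v$ from time $t$ back to $t_0$, which your ODE argument already covers.
\end{itemize}
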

\par
\begin{lemma}[\cite{MR3374958}]\label{STN-HCS-A-sjdij}
Assume that $f\left( x \right)$ and $g\left( x \right)$ are in ${\mathcal{C}^{N + \alpha}}\left( {{\mathbb{T}^3}} \right)$ for $N \in {\mathbb{Z}^ + } \cup \left\{ 0 \right\}$ and $\alpha \in \left[ {0,1} \right)$. If $f\left( x \right)g\left( x \right)$ is in ${\mathcal{C}^{N + \alpha}}\left( {{\mathbb{T}^3}} \right)$, we then have
\begin{equation*}
{\left\| {fg} \right\|_{\mathcal{C}^{N + \alpha }}} \lesssim {\left\| f \right\|_{\mathcal{C}^{N + \alpha }}}{\left\| g \right\|_{\mathcal{C}^0}} + {\left\| f \right\|_{\mathcal{C}^0}}{\left\| g \right\|_{\mathcal{C}^{N + \alpha }}}  .
\end{equation*}
\end{lemma}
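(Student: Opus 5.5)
The plan is to prove the product estimate of Lemma \ref{STN-HCS-A-sjdij} directly from the definition of the H\"{o}lder norms, by combining the Leibniz rule with interpolation between $\mathcal{C}^0$ and $\mathcal{C}^{N+\alpha}$. First, for a multi-index $\sigma$ with $|\sigma| = j \leqslant N$, the Leibniz rule gives
\begin{equation*}
\mathfrak{D}^\sigma (fg) = \sum_{\beta \leqslant \sigma} \binom{\sigma}{\beta} \mathfrak{D}^\beta f \, \mathfrak{D}^{\sigma-\beta} g ,
\end{equation*}
so that $\sup |\mathfrak{D}^\sigma(fg)| \lesssim \sum_{i+k=j} \|f\|_{\mathcal{C}^i}\|g\|_{\mathcal{C}^k}$. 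For the H\"{o}lder seminorm at top order $|\sigma| = N$, write
\begin{equation*}
\mathfrak{D}^\beta f(x)\mathfrak{D}^{\sigma-\beta}g(x) - \mathfrak{D}^\beta f(y)\mathfrak{D}^{\sigma-\beta}g(y) = \left(\mathfrak{D}^\beta f(x)-\mathfrak{D}^\beta f(y)\right)\mathfrak{D}^{\sigma-\beta}g(x) + \mathfrak{D}^\beta f(y)\left(\mathfrak{D}^{\sigma-\beta}g(x)-\mathfrak{D}^{\sigma-\beta}g(y)\right).
\end{equation*}
This bounds $[fg]_{N+\alpha}$ by sums of products $\|f\|_{\mathcal{C}^{i+\alpha}}\|g\|_{\mathcal{C}^{k}}$ and $\|f\|_{\mathcal{C}^{i}}\|g\|_{\mathcal{C}^{k+\alpha}}$ with $i+k=N$. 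Here we use that on the compact torus a $\mathcal{C}^{m+1}$ function is $\mathcal{C}^{m+\alpha}$, which handles the lower-order H\"{o}lder quotients when $|\beta|<N$; the case $\alpha=0$ is just the previous sup bound.

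Second, every such mixed term is of the form $\|f\|_{\mathcal{C}^{a}}\|g\|_{\mathcal{C}^{b}}$ with $a+b = j$ or $a+b=N+\alpha$. We then invoke the standard interpolation inequality on $\mathbb{T}^3$,
\begin{equation*}
\|h\|_{\mathcal{C}^{s}} \lesssim \|h\|_{\mathcal{C}^0}^{1-\frac{s}{r}}\|h\|_{\mathcal{C}^{r}}^{\frac{s}{r}}, \qquad 0\leqslant s\leqslant r = N+\alpha .
\end{equation*}
This can be proved by the usual Taylor/finite-difference argument, or by a Littlewood--Paley decomposition with the frequency cut chosen to balance the two norms. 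Setting $\theta = a/(N+\alpha)$, which satisfies $b/(N+\alpha)\leqslant 1-\theta$, and using $\|g\|_{\mathcal{C}^b}\lesssim \|g\|_{\mathcal{C}^0}^{1-b/(N+\alpha)}\|g\|_{\mathcal{C}^{N+\alpha}}^{b/(N+\alpha)}$, we obtain
\begin{equation*}
\|f\|_{\mathcal{C}^{a}}\|g\|_{\mathcal{C}^{b}} \lesssim \left(\|f\|_{\mathcal{C}^{N+\alpha}}\|g\|_{\mathcal{C}^0}\right)^{\theta}\left(\|f\|_{\mathcal{C}^0}\|g\|_{\mathcal{C}^{N+\alpha}}\right)^{1-\theta}\cdot R ,
\end{equation*}
where $R$ collects leftover powers of $\|f\|_{\mathcal{C}^0}\|g\|_{\mathcal{C}^0}$ when $a+b<N+\alpha$. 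Since $\|f\|_{\mathcal{C}^0}\|g\|_{\mathcal{C}^0}$ is dominated by either product on the right, and Young's inequality $X^\theta Y^{1-\theta}\leqslant X+Y$ applies, the claim follows.

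I expect the main obstacle to be the bookkeeping of exponents in the lower-order terms, where $a+b<N+\alpha$, together with establishing the interpolation inequality with fractional endpoints. I would prove the interpolation first for integer $s$ by induction using $\|\mathfrak{D}h\|_{\mathcal{C}^0}^2\lesssim \|h\|_{\mathcal{C}^0}\|h\|_{\mathcal{C}^2}$-type estimates. The fractional case then follows from the elementary bound $[h]_{m+\alpha}\leqslant \min\{2\|\mathfrak{D}^m h\|_{\mathcal{C}^0}\delta^{-\alpha}, \|\mathfrak{D}^{m+1}h\|_{\mathcal{C}^0}\delta^{1-\alpha}\}$, optimized over the scale $\delta$.
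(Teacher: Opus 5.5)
Your argument (Leibniz rule, splitting the top-order H\"{o}lder difference quotient, interpolating each mixed factor between $\mathcal{C}^0$ and $\mathcal{C}^{N+\alpha}$, and closing with Young's inequality) is correct and is the standard proof of this estimate, which the paper does not prove but quotes from \cite{MR3374958}. Two points need tightening: because the top endpoint $r=N+\alpha$ is fractional, your integer induction plus the $\min$ bound must be supplemented by the reverse step $\|\mathfrak{D}^N h\|_{\mathcal{C}^0}\lesssim \|\mathfrak{D}^{N-1}h\|_{\mathcal{C}^0}^{\frac{\alpha}{1+\alpha}}[h]_{N+\alpha}^{\frac{1}{1+\alpha}}$ (from a Taylor expansion with H\"{o}lder remainder, optimized in the step length), and the exponent bookkeeping should read $\|f\|_{\mathcal{C}^a}\|g\|_{\mathcal{C}^b}\lesssim (\|f\|_{\mathcal{C}^r}\|g\|_{\mathcal{C}^0})^{a/r}(\|f\|_{\mathcal{C}^0}\|g\|_{\mathcal{C}^r})^{b/r}(\|f\|_{\mathcal{C}^0}\|g\|_{\mathcal{C}^0})^{1-(a+b)/r}$ rather than a $(1-\theta)$ power times a leftover $R$, after which Young's inequality for three factors finishes.
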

\par
\begin{lemma}[\cite{Zbl1480.35317}]\label{INSKDINEQU-afski-a-1}
Let $\Psi :{\mathbb{R}^3} \to \mathbb{R}$ and $u:{\mathbb{T}^3} \to {\mathbb{R}^3}$ be two smooth functions. Then
\begin{equation*}
{\left\| {\Psi \left( u \right)} \right\|_{{\mathcal{C}^N}}} \lesssim  {{\left\| \Psi  \right\|}_{{\mathcal{C}^1}}}{{\left\| u \right\|}_{{\mathcal{C}^N}}} + {{\left\| \Psi  \right\|}_{{\mathcal{C}^N}}}\left\| u \right\|_{{\mathcal{C}^0}}^{N - 1}{{\left\| u \right\|}_{{\mathcal{C}^N}}},\ \ \forall  \,  N \in {\mathbb{Z}^+} ,
\end{equation*}
where the relevant constant just depends on $N$.
\end{lemma}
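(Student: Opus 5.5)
The plan is to argue by induction on $N$, using the Faà di Bruno formula for the derivatives of the composition $\Psi(u)$ and then interpolation between $\mathcal{C}^0$ and $\mathcal{C}^N$. The case $N=1$ is immediate: $\mathfrak{D}(\Psi(u)) = (\nabla\Psi)(u)\,\mathfrak{D}u$, so $\|\Psi(u)\|_{\mathcal{C}^1}\lesssim \|\Psi\|_{\mathcal{C}^1}\|u\|_{\mathcal{C}^1}$ once the zeroth-order term $\sup|\Psi(u)|\le\|\Psi\|_{\mathcal{C}^0}$ is accounted for. As is usual for this estimate in \cite{Zbl1480.35317}, I read it as a bound on the derivative part, i.e.\ the seminorms of order $1,\dots,N$; the $\mathcal{C}^0$ part of $\Psi(u)$ is trivially bounded by $\|\Psi\|_{\mathcal{C}^0}$. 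For general $N$ and a multi-index $\sigma$ with $|\sigma|=j$, $1\le j\le N$, the chain rule gives
\begin{equation*}
\mathfrak{D}^\sigma \Psi(u)=\sum_{k=1}^{j}(\mathfrak{D}^k\Psi)(u)\sum_{\beta_1+\cdots+\beta_k=\sigma,\ |\beta_i|\geqslant 1} c_{\beta}\,\mathfrak{D}^{\beta_1}u\cdots\mathfrak{D}^{\beta_k}u ,
\end{equation*}
with combinatorial constants $c_\beta$ depending only on $j$.

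The key step is to bound each product $\prod_{i=1}^k\|\mathfrak{D}^{\beta_i}u\|_{\mathcal{C}^0}$ with $\sum|\beta_i|=j$. I would use the Gagliardo--Nirenberg/Kolmogorov interpolation inequality on $\mathbb{T}^3$,
\begin{equation*}
\|\mathfrak{D}^{m}u\|_{\mathcal{C}^0}\lesssim \|u\|_{\mathcal{C}^0}^{1-\frac{m}{j}}\|u\|_{\mathcal{C}^j}^{\frac{m}{j}},\qquad 0\leqslant m\leqslant j .
\end{equation*}
This yields $\prod_i\|\mathfrak{D}^{\beta_i}u\|_{\mathcal{C}^0}\lesssim \|u\|_{\mathcal{C}^0}^{k-1}\|u\|_{\mathcal{C}^j}$. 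Hence the term with index $k$ is bounded by $\|\Psi\|_{\mathcal{C}^k}\|u\|_{\mathcal{C}^0}^{k-1}\|u\|_{\mathcal{C}^j}$, and in particular the $k=1$ term gives $\|\Psi\|_{\mathcal{C}^1}\|u\|_{\mathcal{C}^j}$.

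It remains to absorb the intermediate terms $2\le k\le N-1$ into the two endpoint terms $k=1$ and $k=N$. Since $k\mapsto \log\|\Psi\|_{\mathcal{C}^k}$ is not necessarily convex, I would instead interpolate on the $\Psi$ side as well. Each seminorm $\|\mathfrak{D}^k\Psi\|_{\mathcal{C}^0}$ is controlled by $\|\mathfrak{D}\Psi\|_{\mathcal{C}^0}^{\frac{N-k}{N-1}}\|\mathfrak{D}^N\Psi\|_{\mathcal{C}^0}^{\frac{k-1}{N-1}}$; this is applied on a ball of radius comparable to $\|u\|_{\mathcal{C}^0}$, which is all that matters since $\Psi$ is only evaluated on the range of $u$. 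The corresponding term is then at most
\begin{equation*}
\left(\|\Psi\|_{\mathcal{C}^1}\|u\|_{\mathcal{C}^j}\right)^{\frac{N-k}{N-1}}\left(\|\Psi\|_{\mathcal{C}^N}\|u\|_{\mathcal{C}^0}^{N-1}\|u\|_{\mathcal{C}^j}\right)^{\frac{k-1}{N-1}} ,
\end{equation*}
and Young's inequality bounds it by the sum of the two endpoint terms. Finally, $\|u\|_{\mathcal{C}^j}\le\|u\|_{\mathcal{C}^N}$ for $j\le N$, and summing over $j$ and $\sigma$ gives the claim with a constant depending only on $N$.

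The main obstacle is this $\Psi$-side interpolation. Interpolation on a bounded ball carries a lower-order correction, roughly $\|\mathfrak{D}^k\Psi\|\lesssim \|\mathfrak{D}\Psi\|^{\theta}\|\mathfrak{D}^N\Psi\|^{1-\theta}+R^{1-k}\|\mathfrak{D}\Psi\|$, where $R$ is the radius of the ball. If this causes trouble, I would fall back on bounding $\|\Psi\|_{\mathcal{C}^k}\le\|\Psi\|_{\mathcal{C}^N}$ together with $\|u\|_{\mathcal{C}^0}^{k-1}\le 1+\|u\|_{\mathcal{C}^0}^{N-1}$; the extra contribution is then harmless in the regime where the lemma is used later in the paper.
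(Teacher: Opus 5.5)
Your argument is correct and is the standard proof of this composition estimate. The paper gives no proof of its own and only cites \cite{Zbl1480.35317}. The ingredients are: the chain rule; interpolation on $\mathbb{T}^3$ giving $\prod_i\|\mathfrak{D}^{\beta_i}u\|_{\mathcal{C}^0}\lesssim\|u\|_{\mathcal{C}^0}^{k-1}\|u\|_{\mathcal{C}^j}$; and interpolation on the $\Psi$ side followed by Young's inequality, which collapses the intermediate orders $2\leqslant k\leqslant N-1$ onto the two endpoint terms. Two points should be tightened.

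\textbf{The obstacle you flag is not one, and the fallback should be deleted.} There are two clean ways to handle the $\Psi$-side interpolation.
\begin{itemize}
\item Global version. Since $\Psi$ is defined on all of $\mathbb{R}^3$ and its norms in the statement are global, you may apply the Landau--Kolmogorov inequality to $\partial_i\Psi$ on $\mathbb{R}^3$. This has no lower-order correction at all.
\item Localized version. Take the ball $\overline{B_R(0)}$ with $R=\|u\|_{\mathcal{C}^0}$, which contains the range of $u$ (the case $u\equiv 0$ is trivial). The correction $R^{1-k}\|\mathfrak{D}\Psi\|_{\mathcal{C}^0}$ always appears multiplied by $\|u\|_{\mathcal{C}^0}^{k-1}\|u\|_{\mathcal{C}^j}$. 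The product is, up to constants, $\|\mathfrak{D}\Psi\|_{\mathcal{C}^0}\|u\|_{\mathcal{C}^j}\leqslant\|\Psi\|_{\mathcal{C}^1}\|u\|_{\mathcal{C}^N}$, i.e.\ the first term on the right, so it is absorbed. This even gives the sharper statement with the $\Psi$-norms taken only over $\overline{B_R(0)}$.
\end{itemize}
Your fallback, by contrast, produces an extra term $\|\Psi\|_{\mathcal{C}^N}\|u\|_{\mathcal{C}^N}$. This term is not dominated by the stated right-hand side when $\|u\|_{\mathcal{C}^0}\ll 1$ and $\|\Psi\|_{\mathcal{C}^N}\gg\|\Psi\|_{\mathcal{C}^1}$. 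For instance, take $\Psi(x)=\epsilon\sin(x_1/\epsilon)$ and $\|u\|_{\mathcal{C}^0}\leqslant\epsilon$. So the fallback would prove a weaker lemma, not this one.

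\textbf{The seminorm reading is forced, not a convention.} With the full $\mathcal{C}^N$ norm on the left, the inequality is false. Take $u\equiv 0$ and $\Psi\equiv 1$: the left side is $1$ and the right side is $0$. So say explicitly that what you prove, and what the lemma must mean, is the bound for $\sum_{1\leqslant|\sigma|\leqslant N}\sup_x|\mathfrak{D}^\sigma\Psi(u)(x)|$. This is the form in which such composition estimates are usually stated. Your remark that the zeroth-order part is bounded by $\|\Psi\|_{\mathcal{C}^0}$ does not place that part under the stated right-hand side.
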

\par
\begin{lemma}[\cite{arXiv:2305.18105}]\label{TSsdfP-sE-L-A-1}
Let $a \in \mathcal{C}^{N + \alpha}$ be smooth vector field. Then, for any smooth function $b \in \mathcal{C}^{N + \alpha}$, we have
\begin{equation*}
{\left\| {\left[ {a \cdot \nabla ,\nabla } \right]b} \right\|_{{\mathcal{C}^{N + \alpha }}}} \lesssim {\left\| a \right\|_{{\mathcal{C}^{N + 1 + \alpha }}}}{\left\| b \right\|_{{\mathcal{C}^1}}} + {\left\| a \right\|_{{\mathcal{C}^1}}}{\left\| b \right\|_{{\mathcal{C}^{N + 1 + \alpha }}}} ,
\end{equation*}
where $\left[ {a \cdot \nabla ,\nabla } \right]b \triangleq a \cdot \nabla \left( {\nabla b} \right) - \nabla \left( {a \cdot \nabla b} \right)$. Furthermore, let $\mathfrak{T}$ be a Calder\'{o}n-Zygmund operator. Then
\begin{equation*}
{\left\| {\left[ {a \cdot \nabla ,\mathfrak{T}} \right]b} \right\|_{{\mathcal{C}^{N + \alpha }}}} \lesssim {\left\| a \right\|_{{\mathcal{C}^{1 + \alpha }}}}{\left\| b \right\|_{{\mathcal{C}^{N + \alpha }}}} + {\left\| a \right\|_{{\mathcal{C}^\alpha }}}{\left\| b \right\|_{{\mathcal{C}^{N + 1 + \alpha }}}} .
\end{equation*}
\end{lemma}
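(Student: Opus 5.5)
The first estimate reduces to a product estimate. Writing the commutator in components,
\begin{equation*}
\left( {\left[ {a \cdot \nabla ,\nabla } \right]b} \right)_i = a_j \partial_j \partial_i b - \partial_i \left( {a_j \partial_j b} \right) = - \partial_i a_j \, \partial_j b ,
\end{equation*}
so $\left[ {a \cdot \nabla ,\nabla } \right]b = -\left( \nabla a \right)^{T} \nabla b$ has no second derivatives of $b$. Lemma \ref{STN-HCS-A-sjdij} applied to $f = \nabla a$, $g = \nabla b$ gives
\begin{equation*}
\left\| {\nabla a\, \nabla b} \right\|_{\mathcal{C}^{N+\alpha}} \lesssim \left\| a \right\|_{\mathcal{C}^{N+1+\alpha}} \left\| b \right\|_{\mathcal{C}^{1}} + \left\| a \right\|_{\mathcal{C}^{1}} \left\| b \right\|_{\mathcal{C}^{N+1+\alpha}} ,
\end{equation*}
which is exactly the first claim.

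For the Calder\'{o}n--Zygmund commutator I will first prove the base case $N = 0$, namely
\begin{equation*}
\left\| {\left[ {a \cdot \nabla ,\mathfrak{T}} \right]b} \right\|_{\mathcal{C}^{\alpha}} \lesssim \left\| a \right\|_{\mathcal{C}^{1+\alpha}} \left\| b \right\|_{\mathcal{C}^{\alpha}} .
\end{equation*}
Represent $\mathfrak{T}b = K * b$ on $\mathbb{T}^3$, where the periodized kernel is $K = K_0 + K_1$. Here $K_0$ is homogeneous of degree $-3$ with mean-zero angular part and supported near the origin, and $K_1$ is smooth. Then
\begin{equation*}
\left[ {a \cdot \nabla ,\mathfrak{T}} \right]b(x) = \int K(x-y)\left( a(x) - a(y) \right) \cdot \nabla b(y) \, \mathrm{d}y - \mathfrak{T}\left( \left( \mathrm{div}\, a \right) b \right)(x) + \left( \mathrm{div}\, a \right)\left( \mathfrak{T} b \right)(x) - \left( \mathrm{div}\, a \right)\left( \mathfrak{T} b \right)(x) ,
\end{equation*}
with the last two terms cancelling as bookkeeping.

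The next step is to integrate by parts in $y$ in the first integral. This moves the derivative onto the kernel $L(x,y) = \partial_{y_j}\left[ K(x-y)\left( a_j(x) - a_j(y) \right) \right]$. That kernel is a Calder\'{o}n-type kernel of size $|x-y|^{-3}\left\| a \right\|_{\mathcal{C}^1}$, and its $\mathcal{C}^{\alpha}$-H\"{o}lder modulus in $x$ is controlled by $\left\| a \right\|_{\mathcal{C}^{1+\alpha}}$. The remaining pieces are of the form $\mathfrak{T}\left( \mathrm{div}\, a \cdot b \right)$ and $\mathrm{div}\, a \cdot \mathfrak{T} b$; these are handled by the Schauder boundedness of $\mathfrak{T}$ on $\mathcal{C}^{\alpha}$ together with Lemma \ref{STN-HCS-A-sjdij}.

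To bound the singular-integral part, I will use the standard Calder\'{o}n commutator theory, that is, $\mathcal{C}^{\alpha}$-boundedness of the first Calder\'{o}n commutator with Lipschitz symbol. Concretely, this means splitting the integral into $|x - y| < 2|x - x'|$ and its complement and estimating directly. Cancellation comes from the mean-zero property of $K_0$ on spheres combined with a Taylor expansion $a(x) - a(y) = \nabla a(x)(x-y) + O\left( \left\| a \right\|_{\mathcal{C}^{1+\alpha}} |x-y|^{1+\alpha} \right)$. I expect this base case to be the main obstacle, because the H\"{o}lder difference of the singular kernel must be handled carefully near the diagonal.

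For $N \geqslant 1$, I differentiate $N$ times and use the Leibniz rule:
\begin{equation*}
\mathfrak{D}^{\sigma}\left[ {a \cdot \nabla ,\mathfrak{T}} \right]b = \left[ {a \cdot \nabla ,\mathfrak{T}} \right]\mathfrak{D}^{\sigma} b + \sum_{0 < \beta \leqslant \sigma} c_{\beta} \left( \mathfrak{D}^{\beta} a \cdot \nabla \mathfrak{T} \mathfrak{D}^{\sigma - \beta} b - \mathfrak{T}\left( \mathfrak{D}^{\beta} a \cdot \nabla \mathfrak{D}^{\sigma - \beta} b \right) \right) .
\end{equation*}
The first term is bounded by the base case by $\left\| a \right\|_{\mathcal{C}^{1+\alpha}} \left\| b \right\|_{\mathcal{C}^{N+\alpha}}$. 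The sum is estimated using Schauder boundedness of $\mathfrak{T}$ on $\mathcal{C}^{\alpha}$ and Lemma \ref{STN-HCS-A-sjdij}, which gives bounds of the form $\left\| a \right\|_{\mathcal{C}^{k+\alpha}} \left\| b \right\|_{\mathcal{C}^{N+1-k+\alpha}}$ for $1 \leqslant k \leqslant N$. Finally, the standard H\"{o}lder interpolation inequality
\begin{equation*}
\left\| a \right\|_{\mathcal{C}^{k+\alpha}} \left\| b \right\|_{\mathcal{C}^{N+1-k+\alpha}} \lesssim \left\| a \right\|_{\mathcal{C}^{1+\alpha}} \left\| b \right\|_{\mathcal{C}^{N+\alpha}} + \left\| a \right\|_{\mathcal{C}^{\alpha}} \left\| b \right\|_{\mathcal{C}^{N+1+\alpha}}
\end{equation*}
converts these intermediate products into the two endpoint terms, completing the estimate.
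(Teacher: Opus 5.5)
Your proof of the first estimate is correct: $[a\cdot\nabla,\nabla]b=-(\nabla a)^{\top}\nabla b$ contains no second derivatives of $b$, and Lemma~\ref{STN-HCS-A-sjdij} with $f=\nabla a$, $g=\nabla b$ gives the bound. (The paper gives no proof of this lemma, only a citation, so there is no route to compare against.) The gap is the final interpolation step for the Calder\'on--Zygmund commutator. The inequality
\[
\|a\|_{\mathcal{C}^{k+\alpha}}\|b\|_{\mathcal{C}^{N+1-k+\alpha}}\lesssim\|a\|_{\mathcal{C}^{1+\alpha}}\|b\|_{\mathcal{C}^{N+\alpha}}+\|a\|_{\mathcal{C}^{\alpha}}\|b\|_{\mathcal{C}^{N+1+\alpha}}
\]
is false for every $k\geqslant 2$. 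For fixed $b$ the right-hand side is $\lesssim\|a\|_{\mathcal{C}^{1+\alpha}}$, which cannot control $\|a\|_{\mathcal{C}^{k+\alpha}}$ when $k\geqslant 2$: take $a$ oscillating at frequency $\lambda$, giving $\lambda^{k+\alpha}$ against $\lambda^{1+\alpha}$. Such terms appear as soon as $N\geqslant 2$. No other argument can remove them, because the second inequality of the lemma is itself false for $N\geqslant 2$.

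Here is a counterexample. Let $\mathfrak{T}$ be the Fourier multiplier with symbol $\xi_1^2/|\xi|^2$ (that is, $-R_1^2$, with $R_1$ the first Riesz transform), $b=\cos x_1$, and $a=(\cos(\lambda x_2),0,0)$ with $\lambda\in\mathbb{Z}^+$. Then $\mathfrak{T}b=b$. The function $a\cdot\nabla b=-\cos(\lambda x_2)\sin x_1$ is supported on the frequencies $(\pm1,\pm\lambda,0)$, where the symbol equals $(1+\lambda^2)^{-1}$. Hence
\[
[a\cdot\nabla,\mathfrak{T}]b=-\frac{\lambda^2}{1+\lambda^2}\cos(\lambda x_2)\sin x_1 .
\]
Applying $\partial_{x_2}^N$ shows its $\mathcal{C}^{N+\alpha}$ norm is at least $\tfrac12\lambda^N$. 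The stated right-hand side is $\lesssim\lambda^{1+\alpha}$. Since $\alpha<1$, letting $\lambda\to\infty$ gives a contradiction for every $N\geqslant 2$.

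Your argument does establish the stated bound for $N\in\{0,1\}$, since for $N=1$ only $k=1$ occurs. For all $N$ it also proves the standard commutator estimate
\[
\|[a\cdot\nabla,\mathfrak{T}]b\|_{\mathcal{C}^{N+\alpha}}\lesssim\|a\|_{\mathcal{C}^{1+\alpha}}\|b\|_{\mathcal{C}^{N+\alpha}}+\|a\|_{\mathcal{C}^{N+1+\alpha}}\|b\|_{\mathcal{C}^{\alpha}},
\]
in which the roles of $a$ and $b$ in the second term are reversed relative to the lemma. There $k+\alpha\in[1+\alpha,N+1+\alpha]$, so each product $\|a\|_{\mathcal{C}^{k+\alpha}}\|b\|_{\mathcal{C}^{N+1-k+\alpha}}$ interpolates between the two endpoint terms, and your Leibniz-plus-interpolation scheme closes. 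You should have noticed that your interpolation puts more derivatives on $a$ than either endpoint allows, and flagged the statement instead of asserting the inequality. The paper's later applications use only $N=0$ of the second estimate, so they are unaffected.

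A smaller point: your displayed base-case identity is not correct as written, because the term $-\mathfrak{T}((\mathrm{div}\,a)b)$ is left uncancelled. The exact starting point is
\[
[a\cdot\nabla,\mathfrak{T}]b(x)=\mathrm{p.v.}\int K(x-y)(a(x)-a(y))\cdot\nabla b(y)\,\mathrm{d}y .
\]
The divergence term (with a plus sign) and a local term of the form $c_{ij}\,\partial_i a_j(x)\,b(x)$ appear only after integrating by parts. This is harmless for the $N=0$ bound.
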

\par
Choose two parameters $b > 1$ and $c \in \left( {\frac{5}{2},\frac{1}{\gamma }} \right]$ satisfying $2 cb \gamma < 1$. For $a > 1$, define the inductive parameter
\begin{equation*}
{\delta _q} = {a^{ - {b^q}}}  .
\end{equation*}
Let ${ \lambda _q}$ be the positive integer satisfying 
\begin{equation*}
{a^{c{b^{q + 1}}}} \leqslant { \lambda _q} \leqslant 2{a^{c{b^{q + 1}}}} .
\end{equation*}
By the exponential decay of $\delta _q$, we have
\begin{equation}
\sum\limits_{j < q} {\delta _{j + n}^{\frac{1}{2}}{{ \lambda }_j}}  \lesssim \delta _{q + n}^{\frac{1}{2}}{{ \lambda }_q}, \ \ \ \  \sum\limits_{j < q} {\delta _{j + n} {{ \lambda }_j ^{1 - m \alpha }}}  \lesssim \delta _{q + n} \lambda _q^{1 - m \alpha } , \ \ \forall \,   m ,n  \in \left\{ {0,1} \right\}. \label{OS-SBE-C-2}
\end{equation}
\par
Assume that $\mathscr{W}$ is the collection of Wiener processes $\mathcal{W} \left( {t,x} \right)$ satisfying the decomposition
\begin{equation}
{\mathcal{W}}\left( {t,x} \right) = \sum\limits_{q \in \mathbb{Z}^+} { {{b _q}{\mathfrak{B}_q}\left( t \right) \lambda _q^{ - \frac{2}{3}}  \left( {\cos \left( {{\lambda _q}{x_3}} \right),\sin \left( {{\lambda _q}{x_3}} \right),0} \right)}},\ \ {b_q} \in \mathbb{R}  ,  \label{BDHFYSJF-SD}
\end{equation}
where ${{\mathfrak{B} _q}\left( t \right)}$ are mutually independent $\mathbb{R}$-valued Brownian motions on the complete probability space $\left( {\Omega ,\mathcal{F},\mathbb{P}} \right)$. Thus, the Wiener process $\mathcal{W} \left( {t} \right)$ generates temporal white noise $\mathrm{d} {\mathcal{W}}$.
Suppose that
\begin{equation*}
\mathfrak{G} \triangleq \sum\limits_{q \in {\mathbb{Z}^ + }} {b_q^2}   <  + \infty .
\end{equation*}
We then have
\begin{equation*}
\mathbb{E}\left\| {\mathcal{W}\left( t \right)} \right\|_\mathcal{H}^2 = t \cdot \sum\limits_{q \in {\mathbb{Z}^ + }} {b_q^2}\lambda _q^{ - \frac{4}{3} }   = t \cdot \mathrm{Tr}\, Q  < + \infty  , \ \ \forall \ t \in  \left[ {0,T} \right] ,
\end{equation*}
and
\begin{equation}
\mathbb{E}\mathop {\sup }\limits_{t \in \left[ {0,T} \right]} \left\| {\mathcal{W}\left( t \right)} \right\|_{{\mathcal{C}^{\frac{2}{3}} }}^2 \leqslant 4T\mathfrak{G} <  + \infty  . \label{DJIFSTOP-A-1}
\end{equation}
For some $\mathfrak{S} > 1 $, define the stopping time
\begin{equation}\label{IDJIFHISDF-ADS-1}
{\mathfrak{t}_\mathfrak{S}} \triangleq \inf \left\{ {t \geqslant 0  \ \big{|}\  \mathop {\sup }\limits_{\tau  \in \left[ {0,t} \right]} \left\| {{\mathcal{W}}\left( \tau  \right)} \right\|_{{\mathcal{C}^{\frac{2}{3}} }}^2  > \mathfrak{S}  T \mathfrak{G}} \right\}  .
\end{equation}
It follows that $\mathbb{E} \left( { {\mathfrak{t}_\mathfrak{S}} } \right)  <  + \infty $ and $\mathbb{P}\left\{ {{\mathfrak{t}_\mathfrak{S}} > 0} \right\} = 1$, which implies that the stopping time ${\mathfrak{t}_\mathfrak{S}}$ is well-defined.
\par
\begin{remark}
\rm{The Wiener process $\mathcal{W} \left( {t} \right)$ in \eqref{BDHFYSJF-SD} should be regarded as a representative realization of white-in-time Gaussian forcing, which provides an explicit divergence-free spatial decomposition to simplify interactions among the stochastic modes. The essential requirement of our construction is the spatial regularity \eqref{DJIFSTOP-A-1} and the orthogonality $\left( {\mathcal{W} \cdot \nabla } \right)\mathcal{W} = 0$. For instance, Bedrossian, Coti Zelati, Punshon-Smith and Weber \cite{MR3943488} illustrated a similar decomposition of stochastic forcing in studying the stationary martingale solutions and the Kolmogorov 4/5 law for the three-dimensional stochastic Navier--Stokes equations.}
\end{remark}
%%%%%%%%%%%%%%%%%%%%%%%%%%%%%%%%%%%%%%%%%%%%%%
\subsection{Probabilistically strong and analytically weak solution}\label{dfsg}
%%%%%%%%%%%%%%%%%%%%%%%%%%%%%%%%%%%%%%%%%%%%%%
For $q  \in \mathbb{Z}^+$, define the stopping times
\begin{equation*}
{\mathfrak{t}_q} = \inf \left\{ {t > 0  \ \big{|}\  \mathop {\sup }\limits_{\tau  \in \left[ {0,t} \right]} \left| {{\mathfrak{B}_q}{\left( \tau  \right)}} \right| > C \delta _q^{\frac{1}{2}} {\delta _{q + 1}}\lambda _q^{\frac{2}{3} - 2\alpha }} \right\}   .
\end{equation*}
We invoke all the stopping times to define the terminal stopping time
\begin{equation*}
{\mathfrak{t}} \triangleq \mathop {\inf }\limits_{q \in {\mathbb{Z}^ + }} \left\{ {{\mathfrak{t}_q},{\mathfrak{t}_\mathfrak{S}}} \right\},
\end{equation*}
where the truncated stopping time ${\mathfrak{t}_\mathfrak{S}}$ is given by \eqref{IDJIFHISDF-ADS-1}.
\par
\begin{lemma}\label{LEM-ST-A-1-TT}
The terminal stopping time ${\mathfrak{t}}$ admits $\mathbb{P}\left( { {\mathfrak{t}} > 0} \right) = 1$ and $\left[ {0,\mathfrak{t}} \right] = \mathop  \cap \limits_{q \in {\mathbb{Z}^ + }} \left[ {0,{\mathfrak{t}_q} \wedge {\mathfrak{t}_\mathfrak{S}}} \right] $.
\end{lemma}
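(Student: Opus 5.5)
The identity $\left[ {0,\mathfrak{t}} \right] = \mathop \cap \limits_{q \in {\mathbb{Z}^ + }} \left[ {0,{\mathfrak{t}_q} \wedge {\mathfrak{t}_\mathfrak{S}}} \right]$ is pathwise and elementary, so I would dispose of it first. For fixed $\omega$ and $s \geqslant 0$, we have $s \leqslant \mathfrak{t}_q \wedge \mathfrak{t}_\mathfrak{S}$ for every $q$ if and only if $s$ is a lower bound of the set $\{\mathfrak{t}_q,\mathfrak{t}_\mathfrak{S}\}_{q}$. This holds if and only if $s \leqslant \inf_q\{\mathfrak{t}_q,\mathfrak{t}_\mathfrak{S}\} = \mathfrak{t}$, by the definition of the infimum. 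Since the intervals are closed on the right, no boundary issue arises.

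The substantive claim is $\mathbb{P}(\mathfrak{t}>0)=1$, because an infimum of countably many a.s. positive stopping times may vanish. Each single time is positive a.s. For $\mathfrak{t}_\mathfrak{S}$ this is already stated after \eqref{IDJIFHISDF-ADS-1}. For $\mathfrak{t}_q$, set $K_q \triangleq C\delta_q^{1/2}\delta_{q+1}\lambda_q^{2/3-2\alpha}$. Then $\mathfrak{B}_q(0)=0$, $\mathfrak{B}_q$ has continuous paths and $K_q>0$, so $\sup_{[0,\tau]}|\mathfrak{B}_q| < K_q$ for small $\tau$, a.s. It therefore suffices to show that, almost surely, only finitely many $q$ satisfy $\mathfrak{t}_q \leqslant T$. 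On that event $\mathfrak{t} \geqslant \min\{T, \mathfrak{t}_\mathfrak{S}, \mathfrak{t}_{q_1},\dots,\mathfrak{t}_{q_m}\}>0$.

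For the finiteness I would use Borel--Cantelli together with the reflection principle:
\begin{equation*}
\mathbb{P}\left( \mathfrak{t}_q \leqslant T \right) \leqslant \mathbb{P}\Big( \sup_{\tau\in[0,T]} |\mathfrak{B}_q(\tau)| \geqslant K_q \Big) \leqslant 4\exp\Big( -\frac{K_q^2}{2T} \Big).
\end{equation*}
Summability requires $K_q$ to grow, and this is the main point to verify from the parameter choices. Using $\delta_q = a^{-b^q}$ and $\lambda_q \geqslant a^{cb^{q+1}}$, I obtain
\begin{equation*}
K_q \geqslant C a^{\,b^q\left( c b(\frac23 - 2\alpha) - \frac12 - b \right)}.
\end{equation*}
Since $c>\frac52$, we have $cb\cdot\frac23 > \frac53 b \geqslant b + \frac23 b > b+\frac12$ (as $b>1$). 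Hence for $\alpha>0$ sufficiently small (as the scheme will impose), the exponent $\kappa \triangleq cb(\frac23-2\alpha)-\frac12-b$ is positive. Then $K_q \geqslant C a^{\kappa b^q}$ grows super-exponentially, so $\sum_q \exp(-K_q^2/2T)<\infty$.

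By Borel--Cantelli, almost surely $\mathfrak{t}_q > T$ for all but finitely many $q$, which finishes the argument. The only delicate step is the exponent check above, namely that $\alpha$ is small relative to $c-\frac52$. I would record this as a constraint $\alpha < \frac{1}{2}\left(\frac23 - \frac{1+ 1/(2b)}{c}\right)$ on $\alpha$, compatible with the later iteration.
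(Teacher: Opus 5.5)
Your proof is correct. For $\mathbb{P}(\mathfrak{t}>0)=1$ it follows the paper's route: Borel--Cantelli shows that a.s.\ $\mathfrak{t}_q$ exceeds a fixed horizon ($T$ for you, an arbitrary $T_0$ in the paper) for all but finitely many $q$, and continuity of the Brownian paths at $t=0$ handles the finitely many remaining indices.

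The only difference in that part is cosmetic. You bound the tails by the reflection principle, which gives Gaussian decay. The paper uses Chebyshev together with Doob's maximal inequality, which gives a bound of order $T/K_q^2$. Both reduce to positivity of the same exponent $cb\left(\frac{4}{3}-4\alpha\right)-1-2b$. Your exponent computation is the clean one; the paper's display carries an extra factor $c$.

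Where you genuinely diverge is the identity $[0,\mathfrak{t}]=\bigcap_q[0,\mathfrak{t}_q\wedge\mathfrak{t}_\mathfrak{S}]$. The paper runs a second Borel--Cantelli estimate on $\mathbb{P}\{\mathfrak{t}_{q+1}<\mathfrak{t}_q\}$ to show that $\mathfrak{t}_q$ is eventually nondecreasing. The infimum is then a minimum over $q\leqslant N^*+1$, and the identity is read off from that. Your observation that $s\leqslant\inf_i s_i$ if and only if $s\leqslant s_i$ for all $i$ is shorter. It holds for every $\omega$ rather than almost surely, and it needs no probabilistic input. The paper's detour only buys the extra fact that the infimum is attained at a finite random index, which the statement does not require.

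One small wording point concerns the constraint on $\alpha$. Your explicit condition $\alpha<\frac{1}{2}\left(\frac{2}{3}-\frac{1+1/(2b)}{c}\right)$ is right, but the margin is not governed by $c-\frac{5}{2}$. It stays positive at $c=\frac{5}{2}$ because $b>\frac{3}{4}$, so what is needed is an absolute smallness of $\alpha$: roughly $\alpha<\frac{1}{30}$ as $c\to\frac{5}{2}^+$ and $b\to1^+$.
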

\par
\noindent{\textbf{Proof}}. 
For any $T_0 > 0$, it follows from Chebyshev inequality and Doob maximal inequality that
\begin{equation*}
\mathbb{P}\left\{ {\mathop {\sup }\limits_{\tau  \in \left[ {0,{T_0}} \right]} \left| {{\mathfrak{B}_q}\left( \tau  \right)} \right| > C  \delta _q^{\frac{1}{2}}  {\delta _{q + 1}}\lambda _q^{\frac{2}{3} - 2\alpha }} \right\} \leqslant \frac{{2{T_0}}}{{{C^2}}}{a^  {\left( {1 + 2b + 4\alpha cb - \frac{4}{3}cb} \right)c{b^q}}} ,
\end{equation*}
which implies that
\begin{equation*}
\sum\limits_{q \in {\mathbb{Z}^ + }} \mathbb{P}\left\{ {\mathop {\sup }\limits_{\tau  \in \left[ {0,{T_0}} \right]} \left| {{\mathfrak{B}_q}\left( \tau  \right)} \right| > C \delta _q^{\frac{1}{2}} {\delta _{q + 1}}\lambda _q^{\frac{2}{3} - 2\alpha }} \right\} < + \infty .
\end{equation*}
Using Borel--Cantelli lemma, there exists a random integer ${N^*}\left( \omega  \right)$ such that
\begin{equation*}
\mathbb{P}\left\{ {{\mathfrak{t}_q} > {T_0}  \ \big{|}\  q \geqslant {N^*}} \right\} = 1 .
\end{equation*}
By the continuity of Brownian paths at $t = 0$, we have
\begin{equation*}
\mathbb{P}\left\{ {{\mathfrak{t}_q} > 0  \ \big{|}\  q < {N^*}} \right\} = 1 .
\end{equation*}
Therefore,
\begin{equation*}
\mathfrak{t} = \mathop {\inf }\limits_{q \in {\mathbb{Z}^ + }} \left\{ {{\mathfrak{t}_q},{\mathfrak{t}_\mathfrak{S}}} \right\} \geqslant \mathop {\min }\limits_{q < {N^*}} \left\{ {{\mathfrak{t}_q},{\mathfrak{t}_\mathfrak{S}},{T_0}} \right\} > 0 , \ \ \mathbb{P}\mbox{-a.s.}
\end{equation*}
\par
Employing Chebyshev inequality and Doob maximal inequality again, we obtain
\begin{equation*}
\mathbb{P}\left\{ {{\mathfrak{t}_{q + 1}} < {\mathfrak{t}_q}} \right\} = {\text{ }}\mathbb{P}\left\{ {\mathop {\sup }\limits_{\tau  \in \left[ {0,{\mathfrak{t}_q}} \right]} \left| {{\mathfrak{B}_{q + 1}}\left( \tau  \right)} \right| > C \delta _q^{\frac{1}{2}} {\delta _{q + 1}}\lambda _q^{\frac{2}{3} - 2\alpha }} \right\} \lesssim {a^{\left( {1 + 2b + 4\alpha cb - \frac{4}{3}cb} \right)c{b^q}}} ,
\end{equation*}
which implies that
\begin{equation*}
\sum\limits_{q \in {\mathbb{Z}^ + }} \mathbb{P}\left\{ {{\mathfrak{t}_{q + 1}} < {\mathfrak{t}_q}} \right\}  < + \infty , \ \ \forall \, N \in {\mathbb{Z}^ + } \cup \left\{ 0 \right\} .
\end{equation*}
Thus, there exists a sufficiently large integer ${N^*}\left( \omega  \right)$ such that
\begin{equation*}
\mathbb{P}\left\{ {{\mathfrak{t}_{q + 1}} \geqslant {\mathfrak{t}_q} \ \big{|}\  q > {N^*}} \right\} = 1 .
\end{equation*}
We then obtain
\begin{equation*}
\mathop {\inf }\limits_{q \in {\mathbb{Z}^ + }} \left\{ {{\mathfrak{t}_q} \wedge {\mathfrak{t}_\mathfrak{S}}} \right\} = \mathop {\min }\limits_{q \leqslant {N^*} + 1} \left\{ {{\mathfrak{t}_q} \wedge {\mathfrak{t}_\mathfrak{S}}} \right\}   , \ \ \mathbb{P}\mbox{-a.s.},
\end{equation*}
and
\begin{equation*}
\mathop  \cap \limits_{j \leqslant {N^* + 1}} \left[ {0,{\mathfrak{t}_j}} \right] = \mathop  \cap \limits_{j \leqslant q} \left[ {0,{\mathfrak{t}_j}} \right] , \ \ \forall \ q > {N^*} , \ \ \mathbb{P}\mbox{-a.s.}
\end{equation*}
Consequently,
\begin{align*}
\left[ {0,\mathfrak{t}} \right] = & \left[ {0,\mathop {\inf }\limits_{q \in {\mathbb{Z}^ + }} \left\{ {{\mathfrak{t}_q} \wedge {\mathfrak{t}_\mathfrak{S}}} \right\}} \right] = \left[ {0,\mathop {\min }\limits_{q{N^*} + 1} \left\{ {{\mathfrak{t}_q} \wedge {\mathfrak{t}_\mathfrak{S}}} \right\}} \right] \\
= & \mathop  \cap \limits_{q \leqslant {N^*} + 1} \left[ {0,{\mathfrak{t}_q} \wedge {\mathfrak{t}_\mathfrak{S}}} \right] = \mathop  \cap \limits_{q \in {\mathbb{Z}^ + }} \left[ {0,{\mathfrak{t}_q} \wedge {\mathfrak{t}_\mathfrak{S}}} \right] , \ \ \mathbb{P}\mbox{-a.s.}
\end{align*}
The proof is complete. \qed
\par
According to Lemma \ref{LEM-ST-A-1-TT}, it is easy to see that
\begin{equation}
\left| {{\mathfrak{B}_q}\left( t \right)} \right| \lesssim \delta _q^{\frac{1}{2}} {\delta _{q + 1}}\lambda _q^{\frac{2}{3} - 2\alpha }  , \ \  \forall \, s, t \in \left[ {0, T \wedge {\mathfrak{t}}} \right],\ \mathbb{P}\mbox{-a.s.} \label{BCBSJFHIG-SFG}
\end{equation}
Furthermore, we can choose two large parameters $C $ and $\mathfrak{S}$ such that $\mathbb{P}\left\{ {\mathfrak{t} > T} \right\} > 1 - \rho$ for any $\rho > 0$. In fact, it follows from Chebyshev inequality that 
\begin{equation*}
\sum\limits_{q \in {\mathbb{Z}^ + }} {\mathbb{P}\left\{ {{\mathfrak{t}_q} \leqslant T} \right\}}  = \sum\limits_{q \in {\mathbb{Z}^ + }} {\mathbb{P}\left\{ {\mathop {\sup }\limits_{\tau  \in \left[ {0,T} \right]} \left| {{\mathfrak{B}_q}\left( \tau  \right)} \right| > C{\delta _{q + 1}}\lambda _q^{\frac{2}{3} - 2\alpha }} \right\}}  \leqslant {C^{ - 2}}T\sum\limits_{q \in {\mathbb{Z}^ + }} {{a^{\left( {2 + 4\alpha c - \frac{4}{3}c} \right)c{b^{q + 1}}}}}  ,
\end{equation*}
and
\begin{equation*}
\mathbb{P}\left\{ {{\mathfrak{t}_\mathfrak{S}} \leqslant T} \right\} = \mathbb{P}\left\{ {\mathop {\sup }\limits_{\tau  \in \left[ {0,T} \right]} \left\| {\mathcal{W}\left( \tau  \right)} \right\|_{{C^\gamma }}^2 > \mathfrak{S}T\mathfrak{G}} \right\} \leqslant C{\mathfrak{S}^{ - 1}} .
\end{equation*}
Therefore,
\begin{equation*}
\mathbb{P}\left\{ {\mathfrak{t} > T} \right\} > 1 - \sum\limits_{q \in {\mathbb{Z}^ + }} {\mathbb{P}\left\{ {\mathfrak{t}_q^* \leqslant T} \right\}} -  \mathbb{P}\left\{ {{\mathfrak{t}_\mathfrak{S}} \leqslant T} \right\} \geqslant 1 - \rho .
\end{equation*}
From this perspective, the Kolmogorov 4/5 law established by Proposition \ref{LHS-3-4-and-5-4-L-L} also holds on the entire time interval $\left[ {0,T} \right]$ with arbitrarily high probability.
\par
\begin{definition}[\cite{Zbl1555.60124}]\label{WMStoBous}
Let $\left( {\Omega ,\mathcal{F},\mathbb{P}} \right)$ be the complete probability space with the right-continuous filtration ${{\left\{ {{\mathcal{F}_t}} \right\}}_{t \in \left[ {0,T} \right]}}$. The stochastic process $u\left( t \right)$ is said to be a probabilistically strong and analytically weak solution to the stochastic incompressible Euler equation \eqref{BouE1}, if it satisfies the following conditions. 
\par
\vspace{1em}
\noindent$(\rmnum{1})$. The process $u\left( t \right)$ is $\mathcal{F}_t$-adapted;
\par
\noindent$(\rmnum{2})$. There exists a stopping time ${\mathfrak{t}}$ such that 
\begin{equation*}
{{u }\left( {t,x} \right)} \in {L^\infty }\left( {\left[ {0, T \wedge {\mathfrak{t}}} \right];{\mathcal{H}}} \right)  , \ \ \mathbb{P}\mbox{-a.s} . ;
\end{equation*}
\par
\noindent$(\rmnum{3})$. For any $\psi \left( x \right) \in {\mathcal{C}^\infty }\left( {{\mathbb{T}^3}} \right) \cap \mathcal{H}$ and $t \in \left[ {0,T \wedge {\mathfrak{t}}} \right]$,
\begin{equation*}
{\left( {u\left( t \right),\psi } \right)_{{\mathbb{T}^3}}} + \int_0^t {{{\left( {\left( {u\left( \tau  \right) \cdot \nabla } \right)u\left( \tau  \right) + \nabla p\left( \tau  \right),\psi } \right)}_{{\mathbb{T}^3}}}} \, \mathrm{d}\tau =  {\left( {u\left( 0 \right),\psi } \right)_{{\mathbb{T}^3}}}  + {\left( {\mathcal{W}\left( t \right),\psi } \right)_{{\mathbb{T}^3}}} , \ \ \mathbb{P}\mbox{-a.s.}
\end{equation*}
\end{definition}
%%%%%%%%%%%%%%%%%%%%%%%%%%%%%%%%%%%%%%%%%%%%%%
%%%%%%%%%%%%%%%%%%%%%%%%%%%%%%%%%%%%%%%%%%%%%%
%%%%%%%%%%%%%%%%%%%%%%%%%%%%%%%%%%%%%%%%%%%%%%
\section{Stochastic Nash iteration scheme}\label{OSG-C-u}
In this section, we employ the stochastic Nash iteration scheme to construct the approximate Euler--Reynolds system and establish the main iterative estimates. The key procedure is to introduce high-frequency perturbations built from classical Beltrami flows together with the designed energy gap, thereby reducing the Reynolds stress and simultaneously preserving the prescribed energy structure. To avoid repetition, we omit the qualifier ``$\mathbb{P}\mbox{-a.s.}$" throughout some proofs whenever the almost-sure interpretation is clear from the context.
%%%%%%%%%%%%%%%%%%%%%%%%%%%%%%%%%%%%%%%%%%%%%%
\subsection{Euler--Reynolds system}\label{s3-1}
Suppose that ${\chi}\left( t \right)$ is the smooth, non-negative and decreasing function with compact support $\left[ { 0, T} \right]$. Take the integer ${\mu _q} = \left\lceil {\delta _{q - 1}^{\frac{1}{4}}\delta _q^{\frac{1}{4}}\lambda _{q - 1}^{\frac{1}{2}}\lambda _q^{\frac{1}{2} + \alpha }} \right\rceil $ with $0 < \alpha < 1$. Making $\alpha$ sufficiently small, it is easy to see that
\begin{equation}
\delta _{q - 1}^{\frac{1}{2}}{{ \lambda }_{q - 1}} \lambda _q^{\alpha } \lesssim \mu_q  \lesssim \delta _q^{\frac{1}{2}}{{ \lambda }_q} . \label{OS-SBE-C-1}
\end{equation}
Denote ${t_l} \triangleq \frac{{lT}}{{\left\lceil {{\mu _q}T} \right\rceil }}$ with the integer $l \in \left[ {0,\left\lceil {{\mu _q}T} \right\rceil } \right] $. Define another cutoff function ${\chi _l}\left( t \right)$ with the compact support $\left[ {{t_l},{t_l} + \frac{5}{4}\frac{T}{{\left\lceil {{\mu _q}T} \right\rceil }}} \right] \cap \left[ {0,T} \right]$ such that
\begin{equation*}
\sum\limits_{{l \in  \left[ {0,\left\lceil {{\mu _q}T} \right\rceil } \right]}} {{\chi ^2_l}\left( { t } \right)}  = 1 , \ \ \ \ \  {\left\| {\partial _t^m{\chi _l}} \right\|_{{L^\infty }\left( {[0,T]} \right)}} \leqslant {C}\mu _q^{ m}, \ \ m \in \mathbb{Z}^+  .
\end{equation*}
Denote ${{\rm T}_l} \triangleq \left[ {{t_l},{t_l} + \frac{5}{4}\frac{T}{{\left\lceil {{\mu _q}T} \right\rceil }}} \right] \cap \left[ {0,T \wedge \mathfrak{t}} \right]$. Define the operator $\mathcal{R}$ on ${{C}\left( {{\mathbb{T}^3}} \right)}$ by
\begin{equation*}
\mathcal{R} v \left( {x} \right) \triangleq  \frac{1}{4}\left( {\nabla \Pi u \left( {x} \right) + {{\left( {\nabla \Pi u \left( {x} \right)} \right)}^{\top}}} \right) - \frac{1}{2}\left( {\mathrm{div}\, u \left( {x} \right)} \right) {I}  + \frac{3}{4}\left( {\nabla u \left( {x} \right) + {{\left( {\nabla u \left( {x} \right)} \right)}^{\top}}} \right) ,
\end{equation*}
where the function $u \left( {x} \right)$ admits $\Delta u\left( x \right) = v\left( x \right) - \int_{{\mathbb{T}^3}} {v\left( x \right)} \, \mathrm{d}x$ and $\int_{{\mathbb{T}^3}} {u\left( x \right)} \, \mathrm{d}x = \mathbf{0}$, the superscript $\top$ means the transposition of vectors, and $I$ denotes the $\left( {3 \times 3} \right)$-order identity matrix. Buckmaster \cite{MR3302631} demonstrated that $\mathrm{div} \mathcal{R} v = v$. 
\par
Define the initial auxiliary function
\begin{equation*}
{v_1}\left( {t,x} \right) = \lambda _1^{ - \frac{1}{3} + \alpha }\chi \left( t \right)\left( {\cos \left( {{\lambda _1}{x_3}} \right),\sin \left( {{\lambda _1}{x_3}} \right),0} \right) , 
\end{equation*}
and the initial velocity
\begin{equation*}
{u_1}\left( {t,x} \right) = {v_1}\left( {t,x} \right) + {\mathcal{W}_1}\left( {t,x} \right) , 
\end{equation*}
where the Wiener process ${\mathcal{W}_1}$ is given by \eqref{BDHFYSJF-SD} with $q = 1$. Then, we have $\left( {{u_1} \cdot \nabla } \right){u_1}  =  0$. Define the initial Reynolds stresses
\begin{align*}
\mathring{R}_1\left( {t,x} \right) = & \lambda _1^{ - \frac{1}{3} + \alpha }{\partial _t}\chi \left( t \right)\mathcal{R}\left( {\cos \left( {{\lambda _1}{x_3}} \right),\sin \left( {{\lambda _1}{x_3}} \right), 0} \right)  \\
= & \lambda _1^{ - \frac{4}{3} + \alpha }{\partial _t}\chi \left( t \right)\left( {\begin{array}{*{20}{c}}
  0&0&{\sin \left( {{\lambda _1}{x_3}} \right)} \\ 
  0&0&{ - \cos \left( {{\lambda _1}{x_3}} \right)} \\ 
  {\sin \left( {{\lambda _1}{x_3}} \right)}&{ - \cos \left( {{\lambda _1}{x_3}} \right)}&{ 0} 
\end{array}} \right) ,
\end{align*}
which implies that ${\partial _t}{v_1} =  \mathrm{div}\, \mathring{R}_1$. Taking the initial pressure ${p_1}\left( {t,x} \right) = 0$, we construct the initial Euler--Reynolds system
\begin{equation*}
\left\{ {\begin{array}{*{20}{l}}
{\partial _t}{v_1} + \left( {{u_1} \cdot \nabla } \right){u_1} + \nabla {p_1} =  \mathrm{div}\, \mathring{R}_1, \\ 
{u_1} = {v_1} + {\mathcal{W}_1}, \\ 
\mathrm{div}\, {u_1} = 0, 
\end{array}} \right.    \ \  t \in \left[ {0, T \wedge \mathfrak{t}} \right] ,\  \mathbb{P}\mbox{-a.s.}
\end{equation*}
\par
Choose the mollifier scales ${\kappa _{q - 1}} = \delta _q^{ - \frac{1}{8}}\delta _{q - 1}^{\frac{1}{8}}\lambda _{q - 1}^{ - \frac{1}{4}}\lambda _q^{ - \frac{3}{4}}$ and ${{\hat \kappa }_{q - 1}} = {\kappa _{q - 1}}\delta _{q}^{\frac{1}{2}}\lambda _q^{ - \varpi }$, where $\varpi \in \left[ {\alpha ,1} \right)$. Direct computation shows that 
\begin{equation}
\lambda _q^{ \alpha } \lesssim {\kappa _{q - 1}} {{ \lambda }_q} , \ \ \ \ \ \ \ {{\hat \kappa }_{q - 1}}\delta _q^{\frac{1}{2}}{\lambda _q} \lesssim {\delta _{q + 1}} . \label{SJKFHASJJFN}
\end{equation}
Denote the convolutions $\bar u \triangleq u*{\varsigma _{{\kappa _{q - 1}}}}$ and $\hat u \triangleq u*{\varsigma _{{{\hat \kappa }_{q - 1}}}}$, where ${\varsigma _{\left(  \cdot  \right)}}$ is the symmetric spatial mollifier. From the definitions of inductive parameters $\delta _q$ and $\lambda _ q$ in Section \ref{dfsg}, we have $\lambda _1^{ - \frac{1}{3} + \alpha } \lesssim \delta _1^{\frac{1}{2}}$ and $\delta _2^{\frac{1}{2}}\lambda _1^{\alpha  - 1} +  \lambda _1^{ - \frac{4}{3} + 2\alpha } \lesssim {\delta _2}$ for some parameters $0 < \alpha \ll 1$. Thus, it is easy to verify that components of the initial stochastic Euler--Reynolds system admit prior estimates as follows.
\par
\begin{lemma}\label{IDUNHU-A-1}
For any $t \in \left[ {0, T \wedge \mathfrak{t}} \right]$ and $N \in \mathbb{Z}^+ \cup \left\{ 0 \right\}$, we have
\begin{equation*}
{\left\| {{v_1}} \right\|_{{\mathcal{C}^N}}} \lesssim \delta _1^{\frac{1}{2}}\lambda _1^N , \ \ \ \ {\left\| {{p_1}} \right\|_{{\mathcal{C}^N}}} \lesssim {\delta _1}\lambda _q^N , \ \ \ \ \mathbb{E}\left\| {\mathcal{W}_1} \right\|_{{\mathcal{C}^N}}^2  \lesssim \lambda _1^{2\left( {N - \frac{2}{3} } \right)} ,
\end{equation*}
\begin{equation*}
{\left\| {{u_1}} \right\|_{{\mathcal{C}^N}}} \lesssim \delta _1^{\frac{1}{2}}\lambda _1^N  , \ \ \mathbb{P}\mbox{-a.s.} 
\end{equation*}
Furthermore, there exists a real number $\alpha  \in \left( {0,1} \right)$ such that
\begin{equation*}
{\left\| {\mathring{R}_1} \right\|_{{\mathcal{C}^0}}} + \lambda _1^{ - 1} {\left\| {\mathring{R}_1} \right\|_{{\mathcal{C}^1}}} + \lambda _1^{ - 2} {\left\| {\mathring{R}_1} \right\|_{{\mathcal{C}^2}}}  \lesssim {\delta _2}\lambda _1^{ - \alpha } , \ \ \forall \, t \in \left[ {0, T} \right] . 
\end{equation*}
and
\begin{equation*}
{\left\| {{\mathfrak{I}_{t,l,1}}\mathring{R}_1} \right\|_{{\mathcal{C}^0}}}  \lesssim  \mu _{2}^{ - 1} \delta _1^{\frac{1}{2}}{\delta _2}\lambda _1^{1 - \alpha }  , \ \  \mathbb{P}\mbox{-a.s.} ,
\end{equation*}
where $\mathfrak{I}_{t,l,q} \left(  \cdot  \right) \triangleq \int_{{t_l}}^t  {\mathrm{d}\left(  \cdot  \right)}  + \int_{{t_l}}^t  {{\bar u_q} \cdot \nabla \left(  \cdot  \right)\mathrm{d}t}  $, $ t \in {{\rm T}_l} $, is the material integration corresponding to ${\bar u_q}$.
\end{lemma}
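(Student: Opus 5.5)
The plan is to verify each bound directly from the explicit formulas for $v_1$, $p_1$, $\mathcal{W}_1$, $u_1$ and $\mathring{R}_1$, and then to use the parameter relations $\lambda_1^{-\frac13+\alpha}\lesssim\delta_1^{\frac12}$ and $\delta_2^{\frac12}\lambda_1^{\alpha-1}+\lambda_1^{-\frac43+2\alpha}\lesssim\delta_2$ recorded just before the statement.

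\textbf{Velocity and pressure.} Since $v_1=\lambda_1^{-\frac13+\alpha}\chi(t)(\cos(\lambda_1x_3),\sin(\lambda_1x_3),0)$ and $0\le\chi\le\chi(0)$, each spatial derivative produces one factor $\lambda_1$. Hence $\|v_1\|_{\mathcal{C}^N}\lesssim\lambda_1^{-\frac13+\alpha}\lambda_1^N\lesssim\delta_1^{\frac12}\lambda_1^N$. The pressure bound is trivial because $p_1=0$.

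\textbf{Noise and total velocity.} The single mode $\mathcal{W}_1=b_1\mathfrak{B}_1(t)\lambda_1^{-\frac23}(\cos,\sin,0)$ gives
\[
\mathbb{E}\|\mathcal{W}_1\|_{\mathcal{C}^N}^2\lesssim b_1^2\,\mathbb{E}|\mathfrak{B}_1(t)|^2\lambda_1^{2(N-\frac23)}\lesssim T\mathfrak{G}\lambda_1^{2(N-\frac23)}.
\]
For the pathwise bound on $u_1$, I use \eqref{BCBSJFHIG-SFG} on $[0,T\wedge\mathfrak{t}]$, namely $|\mathfrak{B}_1|\lesssim\delta_1^{\frac12}\delta_2\lambda_1^{\frac23-2\alpha}$. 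This yields $\|\mathcal{W}_1\|_{\mathcal{C}^N}\lesssim\delta_1^{\frac12}\delta_2\lambda_1^{N-2\alpha}\le\delta_1^{\frac12}\lambda_1^N$ almost surely, and adding the bound for $v_1$ gives the claim for $u_1$.

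\textbf{Reynolds stress.} The explicit matrix for $\mathring{R}_1$ shows
\[
\|\mathring{R}_1\|_{\mathcal{C}^j}\lesssim\|\partial_t\chi\|_\infty\lambda_1^{-\frac43+\alpha+j}\quad\text{for } j=0,1,2.
\]
Each weighted term $\lambda_1^{-j}\|\mathring{R}_1\|_{\mathcal{C}^j}$ is therefore $\lesssim\lambda_1^{-\frac43+\alpha}=\lambda_1^{-\frac43+2\alpha}\lambda_1^{-\alpha}\lesssim\delta_2\lambda_1^{-\alpha}$.

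\textbf{Material integration.} This is the main step. Since $\mathring{R}_1$ is differentiable in time, I write
\[
\mathfrak{I}_{t,l,1}\mathring{R}_1=\int_{t_l}^t\big(\partial_t\mathring{R}_1+\bar u_1\cdot\nabla\mathring{R}_1\big)\,\mathrm{d}s .
\]
On ${\rm T}_l$ the integration length is $|t-t_l|\lesssim\mu_q^{-1}$.

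The transport term is bounded using mollification, which preserves sup norms: $\|\bar u_1\|_{\mathcal{C}^0}\lesssim\delta_1^{\frac12}$ and $\|\nabla\mathring{R}_1\|_{\mathcal{C}^0}\lesssim\delta_2\lambda_1^{1-\alpha}$. Over the time interval this contributes $\lesssim\mu^{-1}\delta_1^{\frac12}\delta_2\lambda_1^{1-\alpha}$, which is the target form.

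The term $\partial_t\mathring{R}_1$ involves $\partial_t^2\chi$, which is bounded independently of $q$, and its size is $\lambda_1^{-\frac43+\alpha}$. I must check that $\lambda_1^{-\frac43+\alpha}\lesssim\delta_1^{\frac12}\delta_2\lambda_1^{1-\alpha}$, which holds since $\lambda_1^{-\frac73+2\alpha}\ll\delta_1^{\frac12}\delta_2$ for $a$ large. This combination also absorbs the harmless $O(1)$ factor from $\partial_t\chi$.

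The expected obstacle is bookkeeping the time window: the stated bound carries $\mu_2^{-1}$, while ${\rm T}_l$ is defined with $\mu_q$. I would interpret the cutoffs at the first stage as having length $\lesssim\mu_2^{-1}$, since $\mu_2\gtrsim\mu_1$ and this matches the next iteration step. If instead only $\mu_1^{-1}$ is available, I would use the slack in $\delta_2\lambda_1^{-\alpha}$ by choosing $\alpha$ small and $a$ large to compare $\mu_1^{-1}$ with $\mu_2^{-1}$.
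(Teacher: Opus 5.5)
Your direct verification is correct and is what the paper intends: the paper gives no argument beyond noting $\lambda_1^{-\frac13+\alpha}\lesssim\delta_1^{\frac12}$ and $\delta_2^{\frac12}\lambda_1^{\alpha-1}+\lambda_1^{-\frac43+2\alpha}\lesssim\delta_2$, and your reading of ${\rm T}_l$ as the stage-two window of length $\lesssim\mu_2^{-1}$ is the right one. One simplification you could have used: the transport part of $\mathfrak{I}_{t,l,1}\mathring{R}_1$ vanishes identically, because $\bar u_1$ has zero third component while $\mathring{R}_1$ depends on $x$ only through $x_3$, so only $\mathring{R}_1(t)-\mathring{R}_1(t_l)=O(\mu_2^{-1}\lambda_1^{-\frac43+\alpha})$ needs bounding.
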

\par
\par
Without loss of generality, assume that the $\left( {q - 1} \right)$-step Euler--Reynolds system takes the form
\begin{equation}
\left\{ {\begin{array}{*{20}{l}}
{\partial _t} {v_{q - 1}} + \left( {{u_{q - 1}} \cdot \nabla } \right){u_{q - 1}} + \nabla {p_{q - 1}}  = \mathrm{div}\, \mathring{R}_{q - 1} , \\ 
{u_{q - 1}} = {v_{q - 1}} + {\mathcal{W}_{q - 1}}, \\ 
\mathrm{div}\, {u_{q - 1}} = 0, 
\end{array}} \right.  \ \  t \in \left[ {0, T \wedge \mathfrak{t}} \right] ,\  \mathbb{P}\mbox{-a.s.} \label{Xin-h-s-2-TT-u-q-1-a}
\end{equation}
For any $t \in \left[ {0, T \wedge \mathfrak{t}} \right]$, the components in \eqref{Xin-h-s-2-TT-u-q-1-a} satisfy the $\left( {q - 1} \right)$-step inductive estimates,
\begin{equation}
{\left\| {{u_{q - 1}}} \right\|_{{\mathcal{C}^0}}} + {\left\| {{v_{q - 1}}} \right\|_{{\mathcal{C}^0}}} + {\left\| {{p_{q - 1}}} \right\|_{{\mathcal{C}^0}}} \leqslant C , \ \ \mathbb{P}\mbox{-a.s.} ,  \label{IJSUHD-A-1-KDJI}
\end{equation}
\begin{equation}
{\left\| {{u_{q - 1}}} \right\|_{{\mathcal{C}^N}}} + {\left\| {{v_{q - 1}}} \right\|_{{\mathcal{C}^N}}}  \lesssim \delta _{q - 1}^{\frac{1}{2}}\lambda _{q - 1}^N , \ \ \ \  {\left\| {{p_{q - 1}}} \right\|_{{\mathcal{C}^N}}} \lesssim \delta _{q - 1}\lambda _{q - 1}^N , \ \ \forall \, N \in \mathbb{Z}^+ , \ \mathbb{P}\mbox{-a.s.} ,   \label{IJSUHD-A-3}
\end{equation}
\begin{equation}
\mathbb{E} \left\| {{\mathcal{W}_{q - 1}}  -   {\mathcal{W}_{q - 2}} } \right\|_{{\mathcal{C}^{N - 1}}}^2 \lesssim \lambda _{q - 1}^{2\left( {N - \frac{5}{3} } \right)} , \ \ \forall \, N \in \mathbb{Z}^+  ,   \label{IJSUHD-A-2}
\end{equation}
Moreover, there exist a real number $\alpha  \in \left( {0,1} \right)$ and a random constant $\mathbb{P}\left\{ {C\left( \omega  \right) <  + \infty } \right\} = 1$ such that
\begin{equation}
{\left\| {\mathring{R}_{q - 1}} \right\|_{{\mathcal{C}^0}}}  + \lambda _{q - 1}^{ - 1} {\left\| {\mathring{R}_{q - 1}} \right\|_{{\mathcal{C}^1}}}  + \lambda _{q - 2}^{ - 1} {\left\| {\mathring{R}_{q - 1}} \right\|_{{\mathcal{C}^2}}} \lesssim C\left( \omega  \right) {\delta _q}\lambda _{q - 1}^{ - \alpha } , \ \ \forall \, t \in \left[ {0, T \wedge \mathfrak{t}} \right], \ \mathbb{P}\mbox{-a.s.} ,  \label{IJSUHD-A-4}
\end{equation}
\begin{equation}
{\left\| {{\mathfrak{I}_{t,l,{q - 1}}}\mathring{R}_{q - 1}} \right\|_{{\mathcal{C}^0}}} \lesssim C\left( \omega  \right) \mu _{q}^{ - 1}  \delta _{q - 1}^{\frac{1}{2}}{\delta _q}\lambda _{q - 1}^{1 - \alpha }  , \ \ \mathbb{P}\mbox{-a.s.}    \label{IJSUHD-A-5}
\end{equation}
Throughout the remainder of this paper, we will omit the random constant $C\left( \omega  \right)$ in some estimates for notational simplicity whenever no ambiguity arises.
%%%%%%%%%%%%%%%%%%%%%%%%%%%%%%%%%%%%%%%%%%%%%%
\subsection{Iterative framework} We use the symbols ``$\otimes$" and ``$\mathtt{i}$" to denote the tensor product and the imaginary unit, respectively. Assume that the vector ${\mathbf{A}_{\mathbf{k}}} \in \mathbb{R}^3$ satisfies
\begin{equation*}
{\mathbf{A}_{\mathbf{k}}} \cdot {\mathbf{k}} = 0,\ \left| {{\mathbf{A}_{\mathbf{k}}}} \right| = {2^{ - \frac{1}{2}}},\ {\mathbf{A}_{ - {\mathbf{k}}}} = {\mathbf{A}_{\mathbf{k}}},\ \  \forall \, {\mathbf{k}} \in \mathbb{Z}^3,\ \left| {\mathbf{k}} \right| = \bar \lambda ,\ \bar \lambda  \geqslant 1 .
\end{equation*}
Denote ${\mathbf{B}_{\mathbf{k}}} \triangleq {\mathbf{A}_{\mathbf{k}}} + \mathtt{i}\frac{{\mathbf{k}}}{{\left| {\mathbf{k}} \right|}} \times {\mathbf{A}_{\mathbf{k}}} \in {\mathbb{C}^3}$. Let ${{\mathbb{B}_{{r_0}}}\left( I \right)}$ with ${r_0} > 0$ be the metric ball centered at $I$ in the space of symmetric $3 \times 3$ matrices.
\par
\begin{lemma}[\cite{MR3302631}]\label{BFL-1}
For any ${a_{\mathbf{k}}} \in \mathbb{C}$ satisfying ${a_{ - {\mathbf{k}}}} = {{\bar a}_{\mathbf{k}}}$, the vector field $w\left( x \right) = \sum\limits_{\left| {\mathbf{k}} \right| = \bar \lambda } {{a_{\mathbf{k}}} {\mathbf{B}_{\mathbf{k}}}\exp \left( {\mathtt{i}{\mathbf{k}} \cdot x} \right)} $ is real-valued and divergence-free. Furthermore, we have
\begin{equation*}
\mathrm{div}\left( {w \left( x \right) \otimes w \left( x \right)} \right) = \frac{1}{2}\nabla \left( {{{\left| w \left( x \right) \right|}^2}} \right),
\end{equation*}
and
\begin{equation*}
\int_{{\mathbb{T}^3}} {w \left( x \right) \otimes w \left( x \right)} \, \mathrm{d}x = \frac{1}{2 \left| {{\mathbb{T}^3}} \right|}\sum\limits_{\left| {\mathbf{k}} \right| = \bar \lambda } {{a_{\mathbf{k}}^2}\left( {I - \frac{{\mathbf{k}}}{{\left| {\mathbf{k}} \right|}} \otimes \frac{{\mathbf{k}}}{{\left| {\mathbf{k}} \right|}}} \right)}.
\end{equation*}
\end{lemma}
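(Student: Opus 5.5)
The plan is to reduce the statement to exact algebraic identities for the Beltrami modes $\mathbf{B}_{\mathbf{k}}e^{\mathtt{i}\mathbf{k}\cdot x}$, exploiting three facts: $\mathbf{A}_{\mathbf{k}}\perp\mathbf{k}$, $|\mathbf{A}_{\mathbf{k}}|=2^{-1/2}$, and the symmetry $\mathbf{A}_{-\mathbf{k}}=\mathbf{A}_{\mathbf{k}}$.

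\textbf{Step 1 (real-valuedness and divergence).} Since $\mathbf{A}_{\mathbf{k}}$ is real and $\mathbf{A}_{-\mathbf{k}}=\mathbf{A}_{\mathbf{k}}$, we get
\begin{equation*}
\mathbf{B}_{-\mathbf{k}}=\mathbf{A}_{\mathbf{k}}-\mathtt{i}\tfrac{\mathbf{k}}{|\mathbf{k}|}\times\mathbf{A}_{\mathbf{k}}=\overline{\mathbf{B}_{\mathbf{k}}}.
\end{equation*}
Combined with $a_{-\mathbf{k}}=\bar a_{\mathbf{k}}$, the terms indexed by $\mathbf{k}$ and $-\mathbf{k}$ are complex conjugates, so their sum is real and $w$ is real. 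For the divergence, $\mathrm{div}(\mathbf{B}_{\mathbf{k}}e^{\mathtt{i}\mathbf{k}\cdot x})=\mathtt{i}(\mathbf{k}\cdot\mathbf{B}_{\mathbf{k}})e^{\mathtt{i}\mathbf{k}\cdot x}$. Here $\mathbf{k}\cdot\mathbf{A}_{\mathbf{k}}=0$ by hypothesis and $\mathbf{k}\cdot(\mathbf{k}\times\mathbf{A}_{\mathbf{k}})=0$ identically, so $w$ is divergence-free.

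\textbf{Step 2 (Beltrami property and the divergence of $w\otimes w$).} First I will check $\mathtt{i}\mathbf{k}\times\mathbf{B}_{\mathbf{k}}=|\mathbf{k}|\mathbf{B}_{\mathbf{k}}$. Using $\mathbf{k}\times(\mathbf{k}\times\mathbf{A}_{\mathbf{k}})=-|\mathbf{k}|^2\mathbf{A}_{\mathbf{k}}$,
\begin{equation*}
\mathtt{i}\mathbf{k}\times\mathbf{B}_{\mathbf{k}}=\mathtt{i}\mathbf{k}\times\mathbf{A}_{\mathbf{k}}+|\mathbf{k}|\mathbf{A}_{\mathbf{k}}=|\mathbf{k}|\mathbf{B}_{\mathbf{k}}.
\end{equation*}
Because every mode has $|\mathbf{k}|=\bar\lambda$, this gives $\mathrm{curl}\,w=\bar\lambda w$. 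Next apply the vector identity
\begin{equation*}
\mathrm{div}(w\otimes w)=(w\cdot\nabla)w=\tfrac12\nabla|w|^2-w\times\mathrm{curl}\,w,
\end{equation*}
whose first equality uses $\mathrm{div}\,w=0$ from Step 1. The last term is $\bar\lambda\,w\times w=0$, which yields the claimed formula.

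\textbf{Step 3 (the spatial average).} Expand
\begin{equation*}
w\otimes w=\sum_{\mathbf{k},\mathbf{k}'}a_{\mathbf{k}}a_{\mathbf{k}'}\mathbf{B}_{\mathbf{k}}\otimes\mathbf{B}_{\mathbf{k}'}e^{\mathtt{i}(\mathbf{k}+\mathbf{k}')\cdot x}.
\end{equation*}
Integrating over $\mathbb{T}^3$ kills every pair except $\mathbf{k}'=-\mathbf{k}$, leaving $\sum_{\mathbf{k}}|a_{\mathbf{k}}|^2\mathbf{B}_{\mathbf{k}}\otimes\overline{\mathbf{B}_{\mathbf{k}}}$ times the normalizing constant. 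Write $\hat{\mathbf{k}}=\mathbf{k}/|\mathbf{k}|$. Then $\{\sqrt2\mathbf{A}_{\mathbf{k}},\sqrt2\,\hat{\mathbf{k}}\times\mathbf{A}_{\mathbf{k}},\hat{\mathbf{k}}\}$ is an orthonormal basis, and
\begin{equation*}
\mathbf{B}_{\mathbf{k}}\otimes\overline{\mathbf{B}_{\mathbf{k}}}=\mathbf{A}_{\mathbf{k}}\otimes\mathbf{A}_{\mathbf{k}}+(\hat{\mathbf{k}}\times\mathbf{A}_{\mathbf{k}})\otimes(\hat{\mathbf{k}}\times\mathbf{A}_{\mathbf{k}})+\mathtt{i}\big[(\hat{\mathbf{k}}\times\mathbf{A}_{\mathbf{k}})\otimes\mathbf{A}_{\mathbf{k}}-\mathbf{A}_{\mathbf{k}}\otimes(\hat{\mathbf{k}}\times\mathbf{A}_{\mathbf{k}})\big].
\end{equation*}
The real part equals $\tfrac12(I-\hat{\mathbf{k}}\otimes\hat{\mathbf{k}})$. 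The imaginary part is antisymmetric and odd in $\mathbf{k}$, because $\mathbf{A}_{-\mathbf{k}}=\mathbf{A}_{\mathbf{k}}$ while $\hat{\mathbf{k}}$ flips sign. Since $|a_{\mathbf{k}}|^2$ is even in $\mathbf{k}$, these imaginary contributions cancel in pairs. This yields the stated formula, with $a_{\mathbf{k}}^2$ read as $|a_{\mathbf{k}}|^2$ and the $|\mathbb{T}^3|$ factor set by the paper's normalization.

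\textbf{Main obstacle.} The only delicate point is the bookkeeping in Step 3: getting the normalizing constant right and showing the antisymmetric imaginary terms cancel under $\mathbf{k}\mapsto-\mathbf{k}$. Steps 1 and 2 are direct computations.
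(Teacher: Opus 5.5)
Your proof is correct and follows the standard argument of the cited source; the paper itself quotes the lemma without proof. The steps are $\mathbf{B}_{-\mathbf{k}}=\overline{\mathbf{B}_{\mathbf{k}}}$ for reality, $\mathtt{i}\mathbf{k}\times\mathbf{B}_{\mathbf{k}}=|\mathbf{k}|\mathbf{B}_{\mathbf{k}}$ combined with $(w\cdot\nabla)w=\frac12\nabla|w|^2-w\times\mathrm{curl}\,w$ for the divergence identity, and the $\mathbf{k}'=-\mathbf{k}$ pairing, with the odd antisymmetric imaginary parts cancelling, for the average.

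You should pin down the constant rather than defer it. Since $\int_{\mathbb{T}^3}e^{\mathtt{i}(\mathbf{k}+\mathbf{k}')\cdot x}\,\mathrm{d}x=|\mathbb{T}^3|$ for $\mathbf{k}'=-\mathbf{k}$ and $0$ otherwise, your computation gives $\frac{|\mathbb{T}^3|}{2}\sum_{|\mathbf{k}|=\bar\lambda}|a_{\mathbf{k}}|^2\bigl(I-\frac{\mathbf{k}}{|\mathbf{k}|}\otimes\frac{\mathbf{k}}{|\mathbf{k}|}\bigr)$. So the printed factor $\frac{1}{2|\mathbb{T}^3|}$ is a misprint, just as $a_{\mathbf{k}}^2$ is; the latter fails for $a_{\mathbf{k}}=\mathtt{i}$, $a_{-\mathbf{k}}=-\mathtt{i}$. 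The version with factor $\frac{|\mathbb{T}^3|}{2}$ and real, even amplitudes is exactly what the proof of Proposition \ref{RSRI-ESI-GAP} uses.
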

\par
\begin{lemma}[\cite{MR3374958}]\label{BFL-2}
For some ${\bar {\lambda}}  > 1$, there exist pairwise disjoint collections ${\Xi  _j} \subset \left\{ {{\mathbf{k}} \in {\mathbb{Z}^3} \mid \left| {\mathbf{k}} \right| = \bar \lambda } \right\}$ and smooth positive functions $\psi _{\mathbf{k}} \in {\mathcal{C}^\infty }\left( {{\mathbb{B}_{{r_0}}}\left( I \right)} \right)$ such that
\begin{equation*}
- {\mathbf{k}} \in {\Xi _j},\ \psi _{{\mathbf{k}}} = \psi _{{ - {\mathbf{k}}}},\ \  \forall \, {\mathbf{k}} \in {\Xi _j},  \ j \in \left\{ {1,2, \cdots ,\mathfrak{N}} \right\}, \ \mathfrak{N} \in \mathbb{Z}^+ ,
\end{equation*}
and
\begin{equation*}
X = \frac{1}{2}\sum\limits_{{\mathbf{k}} \in {\Xi _j}} {{{\left( {\psi _{\mathbf{k}}\left( X \right)} \right)}^2}\left( {I - \frac{{\mathbf{k}}}{{\left| {\mathbf{k}} \right|}} \otimes \frac{{\mathbf{k}}}{{\left| {\mathbf{k}} \right|}}} \right)} ,\ \  \forall \, X \in {{\mathbb{B}_{{r_0}}}\left( I \right)} .
\end{equation*}
\end{lemma}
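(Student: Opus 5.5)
The plan is to first construct a single family $\Xi$ with the required decomposition, and then obtain the $\mathfrak{N}$ pairwise disjoint families $\Xi_j$ by applying distinct rational rotations to this basic configuration. The decomposition itself will come from inverting a linear map from coefficients to symmetric matrices.

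\textbf{Basic family.} Take the six directions
\[
\mathbf{e}_1=(1,1,0),\quad \mathbf{e}_2=(1,-1,0),\quad \mathbf{e}_3=(1,0,1),\quad \mathbf{e}_4=(1,0,-1),\quad \mathbf{e}_5=(0,1,1),\quad \mathbf{e}_6=(0,1,-1),
\]
and let $\hat{\mathbf{e}}_i=\mathbf{e}_i/\sqrt2$. A direct computation gives $\sum_{i=1}^6 \hat{\mathbf{e}}_i\otimes\hat{\mathbf{e}}_i = 2I$. Hence
\[
\sum_{i=1}^6\left(I-\hat{\mathbf{e}}_i\otimes\hat{\mathbf{e}}_i\right)=4I .
\]
The six matrices $\hat{\mathbf{e}}_i\otimes\hat{\mathbf{e}}_i$ span the $6$-dimensional space of symmetric matrices: differences of pairs give the off-diagonal entries, and sums give the diagonal ones. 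Adding multiples of $I=\frac14\sum_i(I-\hat{\mathbf{e}}_i\otimes\hat{\mathbf{e}}_i)$ shows that the matrices $M_i\triangleq I-\hat{\mathbf{e}}_i\otimes\hat{\mathbf{e}}_i$ also form a basis.

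Therefore the map $L:\mathbb{R}^6\to\mathrm{Sym}_3$, $(c_i)\mapsto\sum_i c_iM_i$, is a linear isomorphism. Its inverse gives linear functions $c_i(X)$ with $X=\sum_i c_i(X)M_i$ and $c_i(I)=\frac14>0$. By continuity there is $r_0>0$ such that $c_i(X)\geqslant\frac18$ on $\mathbb{B}_{r_0}(I)$.

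\textbf{Defining the weights.} Each direction appears twice in $\Xi$, as $\pm\mathbf{k}$, and the formula carries the factor $\frac12$. For $\mathbf{k}=\pm\bar\lambda_0\hat{\mathbf{e}}_i$ we therefore set
\[
\psi_{\mathbf{k}}(X)=\sqrt{c_i(X)} ,
\]
which is smooth and positive on the ball and automatically even in $\mathbf{k}$. Then
\[
\frac12\sum_{\mathbf{k}\in\Xi}\psi_{\mathbf{k}}^2\left(I-\frac{\mathbf{k}}{|\mathbf{k}|}\otimes\frac{\mathbf{k}}{|\mathbf{k}|}\right)=X .
\]

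\textbf{Disjoint families.} This is the main obstacle: all the families must consist of lattice points on one common sphere $|\mathbf{k}|=\bar\lambda$. I would choose distinct rotations $O_1,\dots,O_{\mathfrak{N}}\in SO(3)\cap\mathbb{Q}^{3\times3}$, for instance via Cayley's parametrization with rational parameters. They should be generic enough that the $12\mathfrak{N}$ unit vectors $O_j\hat{\mathbf{e}}_i$ are pairwise distinct up to the sign pairing inside each family.

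Let $D$ be a common denominator of all entries of the $O_j$, and set $\bar\lambda=\sqrt2\,D$. Then the vectors $\mathbf{k}=\bar\lambda\,O_j(\pm\hat{\mathbf{e}}_i)=D\,O_j(\pm\mathbf{e}_i)$ lie in $\mathbb{Z}^3$ and all have norm $\bar\lambda$. Define
\[
\Xi_j=\left\{\bar\lambda\,O_j(\pm\hat{\mathbf{e}}_i)\right\}_{i\le 6},
\]
which are pairwise disjoint and symmetric under $\mathbf{k}\mapsto-\mathbf{k}$.

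\textbf{Transferring the decomposition.} Since $I-(O\hat{\mathbf{e}})\otimes(O\hat{\mathbf{e}})=O\,(I-\hat{\mathbf{e}}\otimes\hat{\mathbf{e}})\,O^{\top}$, set $\psi_{\bar\lambda O_j\hat{\mathbf{e}}}(X)=\psi_{\hat{\mathbf{e}}}(O_j^{\top}XO_j)$. Conjugation by $O_j$ preserves $I$ and the Frobenius distance, so it maps $\mathbb{B}_{r_0}(I)$ to itself. Applying the basic identity to $O_j^{\top}XO_j$ and conjugating back gives the required decomposition for each $\Xi_j$.
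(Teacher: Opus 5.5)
Your argument is correct. It is essentially the standard proof of this geometric lemma due to De Lellis--Sz\'ekelyhidi, which the paper only quotes: six rank-two projectors $I-\hat{\mathbf{e}}_i\otimes\hat{\mathbf{e}}_i$ form a basis of the symmetric matrices with $I$ a positive combination, you take square roots of the inverse coefficients near $I$, and you obtain disjoint copies from rational rotations rescaled by a common denominator, with the minor variation of using the directions $\frac{1}{\sqrt2}(1,\pm1,0)$, etc., and $\bar\lambda=\sqrt2\,D$ in place of rational unit vectors. The one step worth writing out is the existence of the $O_j$: rational rotations are dense in $SO(3)$ by the Cayley transform, and each collision condition $O_{j'}^{\top}O_j\hat{\mathbf{e}}_i=\pm\hat{\mathbf{e}}_{i'}$ cuts out a closed one-dimensional set, so the $O_j$ can be chosen inductively.
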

\par
Suppose that ${\Phi _l}\left( {t,x} \right)$ is the solution of the transport equation
\begin{equation*}
\left\{ {\begin{array}{*{20}{l}}
{\partial _t}{\Phi _l} + \left( {{{\bar u}_{q - 1}} \cdot \nabla } \right){\Phi _l} = 0, & \mbox{on}\ {{\rm T}_l} \times {\mathbb{T}^3},\\
{\Phi _l}\left( {{t_l},x} \right) = x ,
\end{array}} \right. 
\end{equation*}
and ${{\bar R}_l}\left( {t,x} \right)$ is the solution of the transport equation
\begin{equation*}
\left\{ {\begin{array}{*{20}{l}}
{\partial _t}{{\bar R}_l} + \left( {{{\bar u}_{q - 1}} \cdot \nabla } \right){{\bar R}_l} = 0, & \mbox{on}\ {{\rm T}_l} \times {\mathbb{T}^3},\\
{{\bar R}_l}\left( {{t_l},x} \right) =  \bar R_{q - 1} \left( {{{t_l},x} } \right) .
\end{array}} \right.  
\end{equation*}
Let the energy gap read
\begin{align}
{\mathfrak{E}_{q - 1 , l}}  = & \frac{1}{3 \left| {{\mathbb{T}^3}} \right|}\left( {\mathfrak{e}\left( t_l \right) - \left\| {{{\hat u} _{q - 1}}\left( t_l \wedge  \mathfrak{t} \right)} \right\|_{{L^2}}^2 + \left\| {{{ \mathcal{\hat W}}_{q - 1}\left( t_l \wedge  \mathfrak{t} \right)}} \right\|_{{L^2}}^2 } \right) \nonumber \\
& + \frac{1}{3 \left| {{\mathbb{T}^3}} \right|}\left( { 2\int_0^{t_l \wedge  \mathfrak{t}} {\left( {{{\hat v}_{q - 1}},{\text{d}}{\mathcal{\hat W}_{q - 1}}} \right)_{\mathbb{T}^3}}  - \frac{1}{2} \mathfrak{M}{\delta _{q + 1}}} \right) ,  \label{SJKFJdafj-sagfg}
\end{align}
and satisfy 
\begin{equation}
0 < \mathbb{E} \mathfrak{E}_{q - 1 , l}  \sim {\delta _q} , \ \ \forall \, l \in \left[ {0,\left\lceil {{\mu _q}T} \right\rceil } \right] , \label{SJKFJIFJSKJGG-ASD-1}
\end{equation}
where $\mathfrak{M}$ is a positive parameter, and $\mathfrak{e}\left( t \right) \in \mathcal{C}^1 $ is a decreasing function. It is straightforward to verify that \eqref{SJKFJIFJSKJGG-ASD-1} holds for the first-level energy gap. In view of \eqref{SJKFJIFJSKJGG-ASD-1}, we have
\begin{equation*}
{\left\| {\frac{{{{\bar R}_l}}}{ \mathbb{E}{\mathfrak{E}_{q - 1,l}} }} \right\|_{{\mathcal{C}^0}}}  \lesssim \delta _q^{ - 1}\left\| {\bar R_{q - 1}} \right\|_{{\mathcal{C}^0}} \lesssim \lambda _{q - 1}^{ - \alpha }  \ll 1 , \ \ \mathbb{P}\mbox{-a.s.} 
\end{equation*}
\par
Define the amplitude function $a_{{\mathbf{k}}l} \triangleq \sqrt {{\rho _{q - 1,l}}} {\psi _{{\mathbf{k}}}}\left( {I - \frac{{{{\bar R}_l}}}{{\rho _{q - 1,l}}}} \right) $, where ${\rho _{q - 1 ,l}} = \mathbb{E}{\mathfrak{E}_{q - 1 ,l}} $. Construct the Beltrami flow
\begin{equation*}
w_{{\mathbf{k}}l}\left( {t,x} \right) =  {{a_{{\mathbf{k}}l}}\left( {t,x} \right)}  {\mathbf{B}_{\mathbf{k}}}\exp \left( {\mathtt{i}{{\lambda} _q}{\mathbf{k}} \cdot {\Phi _l}\left( {t,x} \right)} \right) .  
\end{equation*}
There exist disjoint collections ${\Xi _{1}}$ and ${\Xi _{2}}$ such that the principal perturbation reads
\begin{equation}
{w_{o,q-1}}\left( {t,x} \right) \triangleq \sum\limits_{l \ \text{odd}, \ {\mathbf{k}} \in {\Xi _1}} {{\chi _l}\left( t \right)w_{{\mathbf{k}}l}\left( {t,x} \right)}  + \sum\limits_{l \ \text{even}, \ {\mathbf{k}} \in {\Xi _2}} {{\chi _l}\left( t \right)w_{{\mathbf{k}}l}\left( {t,x} \right)} .  \label{PP-A-1}
\end{equation}
The corrector of ${w_{o,q-1}}\left( {t,x} \right)$ is
\begin{equation*}
{w_{c,q-1}}\left( {t,x} \right) \triangleq \sum\limits_{\left( {{\mathbf{k}},l} \right)} {\frac{{{\chi _l}\left( t \right)}}{{{{\lambda} _q}}}} \nabla  \times \left( {\mathtt{i}{a_{{\mathbf{k}}l}}\left( {t,x} \right){\phi _{{\mathbf{k}}l }}\left( {t,x} \right)\frac{{{\mathbf{k}} \times {\mathbf{B}_{\mathbf{k}}}}}{{{{\left| {\mathbf{k}} \right|}^2}}}} \right) \exp \left( {\mathtt{i}{{\lambda} _q}{\mathbf{k}} \cdot x} \right) ,
\end{equation*}
where ${\phi _{{\mathbf{k}}l}}\left( {t,x} \right) = \exp \left( {\mathtt{i}{{\lambda} _q}{\mathbf{k}} \cdot \left( {{\Phi _l}\left( {t,x} \right) - x} \right)} \right)$. Then, the random perturbation is defined by
\begin{equation}
w_{q - 1}\left( {t,x} \right) = {w_{o,q-1}}\left( {t,x} \right) + {w_{c,q-1}}\left( {t,x} \right) , \label{P-IE-Q}
\end{equation}
and we have $\mathrm{div}\, \left( {{w_{o,q-1}} + {w_{c,q-1}}} \right) = 0$. 
\par
Denote the stochastic component by
\begin{equation}
\mathcal{W}_q^{loc}\left( {t,x} \right) \triangleq {b_q}{\mathfrak{B}_q}\left( t \right) \lambda _q^{ - \frac{2}{3} }  \left( {\cos \left( {{\lambda _q}{x_3}} \right),\sin \left( {{\lambda _q}{x_3}} \right),0} \right) . \label{P-Ie-AAA-F-2}
\end{equation}
Construct the $q$-step velocities
\begin{equation}
{u_q}  = {u_{q - 1}} + w_{q - 1}  + \mathcal{W}_q^{loc} , \ \ \ \ {v_q} = {v_{q - 1}} + {w_{q - 1}} ,\label{P-Ie}
\end{equation}
and the pressure
\begin{equation}
{p_q} =  {p_{q - 1}}  - \frac{2}{3} {w_{o,q-1}} \cdot {w_{c,q-1}}  - \frac{2}{3}  \left( {{u_{q - 1}} - {\bar u_{q - 1}}} \right) \cdot w_{q-1} - \frac{1}{2}{\left| {{w_{o,q-1}}} \right|^2} - \frac{1}{3}{\left| {{w_{c,q-1}}} \right|^2}  . \label{Ie-SBE-C-1}
\end{equation}
Using the inductive estimates \eqref{IJSUHD-A-1-KDJI}-\eqref{IJSUHD-A-4}, we derive that for any $t \in \left[ {0, T \wedge \mathfrak{t}} \right]$,
\begin{equation}
{\left\| {{{\bar u}_{q - 1}}} \right\|_{{\mathcal{C}^N}}} \lesssim \kappa _{q - 1}^{1 - N}{\left\| {{u_{q - 1}}} \right\|_{{\mathcal{C}^1}}} \lesssim \kappa _{q - 1}^{1 - N}\delta _{q-1}^{\frac{1}{2}}{\lambda _{q-1}} , \ \ \forall \, N \in \mathbb{Z}^+  , \ \mathbb{P}\mbox{-a.s.}, \label{IJUHCJD-D-D-A-1}
\end{equation}
\begin{equation}
{\left\| {{u_{q - 1}} - {{\bar u}_{q - 1}}} \right\|_{{\mathcal{C}^N}}} \lesssim {\kappa _{q - 1}}{\left\| {{u_{q - 1}}} \right\|_{{C^{N + 1}}}}  \lesssim {\kappa _{q - 1}}\delta _{q - 1}^{\frac{1}{2}}\lambda _{q - 1}^{N + 1} , \ \ \forall \, N \in \mathbb{Z}^+ \cup \left\{ 0 \right\}  , \ \mathbb{P}\mbox{-a.s.} , \label{JUIDHI-AS-A}
\end{equation}
and 
\begin{equation}
{\left\| {{\partial _t}\left( {v_{q - 1}^u - \bar v_{q - 1}^u} \right)} \right\|_{{\mathcal{C}^N}}} \lesssim {\kappa _{q - 1}}\delta _{q - 1}^{\frac{1}{2}}\lambda _{q - 1}^{N + 2} , \ \ \forall \, N \in \mathbb{Z}^+ \cup \left\{ 0 \right\}  , \ \mathbb{P}\mbox{-a.s.} \label{IJUHCJD-D-D-A-1-JU-2}
\end{equation}
It follows from \eqref{IJUHCJD-D-D-A-1} and \eqref{JUIDHI-AS-A} that for any $t \in {{\rm T}_l}$,
\begin{equation}
{\left\| {\mathfrak{D}{\Phi _l}} \right\|_{{\mathcal{C}^0}}} + {\left\| {{\phi _{\mathbf{k}l}}} \right\|_{{\mathcal{C}^0}}} \leqslant C , \ \mathbb{P}\mbox{-a.s.} , \label{IJUHCJD-D-D-A-2-dfjgtyu-3e}
\end{equation}
\begin{equation}
{\left\| {\mathfrak{D}{\Phi _l} - I} \right\|_{{\mathcal{C}^0}}} \lesssim \mu _q^{ - 1}\delta _{q - 1}^{\frac{1}{2}}{\lambda _{q - 1}} , \ \mathbb{P}\mbox{-a.s.} , \label{IJUHCJD-D-D-A-2-dfjgtyu-3e-djfjgjas}
\end{equation}
\begin{equation}
{\left\| {\mathfrak{D}{\Phi _l} - I} \right\|_{{\mathcal{C}^N}}} + {\left\| {\mathfrak{D}{\Phi _l}} \right\|_{{\mathcal{C}^N}}} \lesssim {\mu _q ^{ - 1}}\kappa _{q - 1}^{ - N}\delta _{q - 1}^{\frac{1}{2}}{\lambda _{q - 1}} , \ \ \forall \, N \in \mathbb{Z}^+ , \ \mathbb{P}\mbox{-a.s.} , \label{IJUHCJD-D-D-A-2}
\end{equation}
\begin{equation}
{\left\| {a_{{\mathbf{k}}l}} \right\|_{{\mathcal{C}^N}}} \lesssim \delta _q^{\frac{1}{2}}{\lambda _{q - 1}}\kappa _{q - 1}^{1 - N}, \ \ \forall \, N \in \mathbb{Z}^+ , \ \mathbb{P}\mbox{-a.s.} , \label{IJUHCJD-D-D-A-3}
\end{equation}
and
\begin{equation}
{\left\| {{\phi _{\mathbf{k}l}}} \right\|_{{\mathcal{C}^N}}} \lesssim \lambda _q^{N\left( {1 - \varpi } \right)} , \ \ \forall \, N \in \mathbb{Z}^+ , \ \mathbb{P}\mbox{-a.s.} \label{IJUHCJD-D-D-A-4}
\end{equation}
\par
Our construction is inspired by the framework of Buckmaster, Lellis, Isett and Sz\'{e}kelyhidi \cite[Section $2$]{MR3374958}. By virtue of the similar argument, it follows from \eqref{IJUHCJD-D-D-A-1}-\eqref{IJUHCJD-D-D-A-4} that for any $t \in  \left[ {0, T \wedge \mathfrak{t}} \right]$ and $N \in \mathbb{Z}^+ \cup \left\{ 0 \right\} $,
\begin{equation}
{\left\| {{w_{o,q-1}}} \right\|_{{\mathcal{C}^N}}} \lesssim \delta _q^{\frac{1}{2}}\lambda _q^N , \ \mathbb{P}\mbox{-a.s.} , \label{E-OS-SBE-1}
\end{equation}
\begin{equation}
{\left\| {{w_{c,q-1}}} \right\|_{{\mathcal{C}^N}}} \lesssim {\mu _q ^{ - 1}}\delta _{q - 1}^{\frac{1}{2}}\delta _q^{\frac{1}{2}}{{ \lambda }_{q - 1}}\lambda _q^N , \ \mathbb{P}\mbox{-a.s.} , \label{E-OS-SBE-2}
\end{equation}
\begin{equation}
{\left\| {{\mathfrak{D}_{t,q - 1}} {w_{o,q-1}}} \right\|_{{\mathcal{C}^N}}} \lesssim \mu _q \delta _q^{\frac{1}{2}}\lambda _q^N , \ \mathbb{P}\mbox{-a.s.} , \label{E-OS-SBE-3}
\end{equation}
and
\begin{equation}
{\left\| {{\mathfrak{D}_{t,q - 1}} {w_{c,q-1}}} \right\|_{{\mathcal{C}^N}}} \lesssim \delta _{q - 1}^{\frac{1}{2}}\delta _q^{\frac{1}{2}}{{ \lambda }_{q - 1}}\lambda _q^N , \ \mathbb{P}\mbox{-a.s.},  \label{E-OS-SBE-4}
\end{equation}
where ${{ \mathfrak{D}}_{t,q - 1}} = {\partial _t} + {{\bar u}_{q - 1}} \cdot \nabla $ is the material derivative corresponding to ${\bar u_{q - 1}}$.
\par
\begin{lemma}\label{Ie-SBE}
There exists the $q$-step Reynolds stress $\mathring{R}_q\left( {t,x} \right)$ such that the combination $\left( {{u_{q }}, {v_{q }} ,{p_{q }}} \right) $ fulfills the $q$-step Euler--Reynolds system
\begin{equation}
\left\{ {\begin{array}{*{20}{l}}
{\partial _t}{v_q} + \left( {{u_q} \cdot \nabla } \right){u_q} + \nabla {p_q} = \mathrm{div}\, \mathring{R}_q, \\ 
{u_q} = {v_q} + {\mathcal{W}_q}  , \\ 
\mathrm{div}\, {u_q} = 0, 
\end{array}} \right. \ \  t \in \left[ {0, T \wedge \mathfrak{t}} \right], \  \mathbb{P}\mbox{-a.s.} \label{ie-sbe-c}
\end{equation}
\end{lemma}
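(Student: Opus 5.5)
The plan is constructive: substitute the definitions \eqref{P-Ie-AAA-F-2}, \eqref{P-Ie} and \eqref{Ie-SBE-C-1} into the left-hand side of \eqref{ie-sbe-c}, subtract the $(q-1)$-step system \eqref{Xin-h-s-2-TT-u-q-1-a}, and show that what remains is the divergence of a symmetric trace-free matrix, which we then take as $\mathring{R}_q$. First we record the structural facts. Since $\mathcal{W}_q=\mathcal{W}_{q-1}+\mathcal{W}_q^{loc}$, we have $u_q=v_q+\mathcal{W}_q$. Divergence-freeness of $u_q$ follows from $\mathrm{div}\,u_{q-1}=0$, from $\mathrm{div}\,(w_{o,q-1}+w_{c,q-1})=0$, and from the fact that $\mathcal{W}_q^{loc}$ depends only on $x_3$ and has vanishing third component. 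Because $\partial_t v_q=\partial_t v_{q-1}+\partial_t w_{q-1}$ involves no time derivative of the Brownian motion, every term below is pathwise meaningful on $[0,T\wedge\mathfrak{t}]$.

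Expanding $(u_q\cdot\nabla)u_q$ with $u_q=u_{q-1}+w_{q-1}+\mathcal{W}_q^{loc}$ and using the $(q-1)$-step equation, we obtain
\begin{equation*}
\partial_t v_q+(u_q\cdot\nabla)u_q+\nabla p_q=\mathrm{div}\,\mathring{R}_{q-1}+\mathrm{div}(w_{o,q-1}\otimes w_{o,q-1})+\mathfrak{D}_{t,q-1}w_{q-1}+\big((u_{q-1}-\bar u_{q-1})\cdot\nabla\big)w_{q-1}+(w_{q-1}\cdot\nabla)u_{q-1}+E+\nabla(p_q-p_{q-1}),
\end{equation*}
where $E$ collects the corrector quadratic terms and all terms containing $\mathcal{W}_q^{loc}$. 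All terms are written in divergence form via $\mathrm{div}\,u=0$, for example $(a\cdot\nabla)b=\mathrm{div}(b\otimes a)$.

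We then group the terms as follows.
\begin{itemize}
\item[(i)] The oscillation part $\mathrm{div}\,\mathring{R}_{q-1}+\mathrm{div}(w_o\otimes w_o)$. Using Lemma \ref{BFL-1} and Lemma \ref{BFL-2} with $a_{\mathbf{k}l}$ built from $\bar R_l$ and $\rho_{q-1,l}$, the low-frequency part of $\sum\chi_l^2 w_{\mathbf{k}l}\otimes w_{-\mathbf{k}l}$ equals $\rho_{q-1,l}I-\bar R_l$. The remainder splits into a gradient $\frac12\nabla|w_o|^2$, which is absorbed by the pressure term $-\frac12|w_o|^2$, and high-frequency terms plus $\mathring{R}_{q-1}-\bar R_l$, to which $\mathcal{R}$ is applied. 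Terms with $l\neq l'$ vanish or are handled by the disjointness of $\Xi_1$ and $\Xi_2$ together with the support overlap of adjacent $\chi_l$.
\item[(ii)] The transport part $\mathfrak{D}_{t,q-1}w_{q-1}$, set into $\mathcal{R}(\mathfrak{D}_{t,q-1}w_{q-1})$; this is legitimate because $w_{q-1}$ has zero mean.
\item[(iii)] The Nash and mollification parts, $\mathcal{R}\big((w\cdot\nabla)\bar u_{q-1}\big)$ plus the symmetric products with $u_{q-1}-\bar u_{q-1}$. Their trace parts are absorbed by $-\frac23(u_{q-1}-\bar u_{q-1})\cdot w$.
\item[(iv)] The corrector parts $w_o\otimes w_c+w_c\otimes w_o+w_c\otimes w_c$. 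Their traces are removed by $-\frac23 w_o\cdot w_c-\frac13|w_c|^2$.
\item[(v)] The stochastic parts.
\end{itemize}

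For (v), the decomposition \eqref{BDHFYSJF-SD} gives $(\mathcal{W}_q^{loc}\cdot\nabla)\mathcal{W}_j=0$ for every $j$, since all these fields depend only on $x_3$ and have zero third component. Hence
\begin{equation*}
(\mathcal{W}_q^{loc}\cdot\nabla)\mathcal{W}_q^{loc}=0 .
\end{equation*}
The remaining cross terms $\mathrm{div}\big(\mathcal{W}_q^{loc}\otimes(v_{q-1}+w)+(v_{q-1}+w)\otimes\mathcal{W}_q^{loc}\big)$ are already divergences of symmetric matrices. We subtract their traces into the pressure, or else place them in $\mathring{R}_q$ after removing the trace. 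These are the Itô-type contributions to the Reynolds stress.

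Defining $\mathring{R}_q$ as the sum of all the resulting trace-free symmetric pieces closes \eqref{ie-sbe-c}.

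The main obstacle is the bookkeeping of pressure traces. The explicit coefficients $-\frac23$, $-\frac12$, $-\frac13$ in \eqref{Ie-SBE-C-1} must match exactly what is needed to make each grouped tensor trace-free. In particular, the trace of $w_o\otimes w_o$ has to be reconciled with the energy-gap term $\rho_{q-1,l}I$, which is a function of time only and therefore has zero gradient. To settle this, I would first check the traces term by term, and put any leftover trace into $\mathring{R}_q$ as a pure gradient that is constant in $x$.
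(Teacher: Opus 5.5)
Your argument is the paper's. You substitute \eqref{P-Ie} and \eqref{Ie-SBE-C-1}, subtract \eqref{Xin-h-s-2-TT-u-q-1-a}, and sort the remainder into the same pieces:
\begin{itemize}
\item $R^{Nash}_{q-1}$ is your (ii) together with $\mathcal{R}((w_{q-1}\cdot\nabla)\bar u_{q-1})$;
\item $R^{osc}_{q-1}$ and $R^{com}_{q-1}=\mathring R_{q-1}-\sum_l\chi_l^2\bar R_l$ come from your (i);
\item $R^{rem}_{q-1}$ is your (iii) and (iv);
\item $R^{sto}_{q-1}$ is your (v).
\end{itemize}
As the paper does, you use $\sum_l\chi_l^2=1$, $\mathrm{div}(\sum_l\chi_l^2\rho_{q-1,l}I)=0$, and the vanishing of all $\mathcal{W}$--$\mathcal{W}$ interactions, which the paper records as $\mathrm{div}(\mathcal{W}_q\otimes\mathcal{W}_q^{loc})=0$. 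Your trace checks with the coefficients $\frac23$ and $\frac13$ for the two tensors in (iii) and (iv) are correct.

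One step does not close as written. In (v) the pressure is not at your disposal: $p_q$ is fixed by \eqref{Ie-SBE-C-1} and contains no $\mathcal{W}_q^{loc}$ term. So you cannot move the trace of $\tilde F=\mathcal{W}_q^{loc}\otimes(v_{q-1}+w_{q-1})+(v_{q-1}+w_{q-1})\otimes\mathcal{W}_q^{loc}$ into it. Your other option, putting $\tilde F-\frac13(\mathrm{tr}\,\tilde F)I$ into $\mathring R_q$, leaves the unbalanced gradient $\frac23\nabla(\mathcal{W}_q^{loc}\cdot(v_{q-1}+w_{q-1}))$.

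The paper takes instead $R^{sto}_{q-1}=\mathcal{R}\,\mathrm{div}\,\tilde F$. Since $\mathrm{div}\,\tilde F$ has zero mean, $\mathrm{div}\,\mathcal{R}\,\mathrm{div}\,\tilde F=\mathrm{div}\,\tilde F$, and $\mathcal{R}$ outputs symmetric trace-free matrices, so nothing has to go into the pressure.

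The same remark removes the ``main obstacle'' of your last paragraph:
\begin{itemize}
\item nothing passed through $\mathcal{R}$ needs a trace correction;
\item $\sum_l\chi_l^2\rho_{q-1,l}I$ is constant in $x$ and simply drops out, so there is no leftover to place anywhere;
\item $-\frac12|w_{o,q-1}|^2$ is not a trace correction at all (indeed $w_o\otimes w_o-\frac12|w_o|^2I$ is not trace-free). It cancels the Beltrami gradient $\frac12\nabla|w_{o,q-1}|^2$, as you said correctly in (i).
\end{itemize}
In fact, $\mathcal{R}$ inverts the divergence on every mean-zero vector field, gradients included. So the bare existence claim already follows from $\mathring R_q:=\mathcal{R}(\partial_t v_q+\mathrm{div}(u_q\otimes u_q)+\nabla p_q)$, whose argument has zero mean because $v_q$ does. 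The decomposition, yours and the paper's, is dictated by the estimates in the next proposition, not by closing the equation.
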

\par
{\noindent}{\bf{Proof}}.
Construct the $q$-step Reynolds stress
\begin{equation}
\mathring{R}_q\left( {t,x} \right) = R_{q - 1}^{Nash}\left( {t,x} \right) + R_{q - 1}^{osc}\left( {t,x} \right) + R_{q - 1}^{com}\left( {t,x} \right) + R_{q - 1}^{rem}\left( {t,x} \right) + R_{q - 1}^{sto}\left( {t,x} \right) , \label{rsyduf-a-1}
\end{equation}
where the Nash error is
\begin{equation*}
R_{q - 1}^{Nash} = \mathcal{R}  {\partial _t}{w_{q - 1}}  + \mathcal{R} \mathrm{div}\left( {{{\bar u}_{q - 1}} \otimes {w_{q - 1}} + {w_{q - 1}} \otimes {{\bar u}_{q - 1}}} \right),
\end{equation*}
the oscillation error is
\begin{equation*}
R_{q - 1}^{osc} =  \mathcal{R} \mathrm{div}\left( {{w_{o,q - 1}} \otimes {w_{o,q - 1}} - \sum\limits_{{l \in  \left[ {0,\left\lceil {{\mu _q}T} \right\rceil } \right]}} {\chi _l^2 \left( {{\rho _{q - 1}}I - {{\bar R}_l}} \right)} - \frac{1}{2}{\left| {{w_{o,q - 1}}} \right|^2}I } \right) ,
\end{equation*}
the compensative corrector is
\begin{equation*}
R_{q - 1}^{com} = \mathring{R}_{q - 1} - \bar R_{q - 1} + \sum\limits_{{l \in  \left[ {0,\left\lceil {{\mu _q}T} \right\rceil } \right]}} {\chi _l^2\left( {\bar R_{q - 1} -  {{\bar R}_l}} \right)}  ,
\end{equation*}
the stochastic error is
\begin{equation*}
R_{q - 1}^{sto} =  \mathcal{R}{\text{div}}\left( { {w_{q - 1}} \otimes \mathcal{W}_q^{loc} +   \mathcal{W}_q^{loc} \otimes {w_{q - 1}} + {v_{q - 1}} \otimes \mathcal{W}_q^{loc} + \mathcal{W}_q^{loc} \otimes {v_{q - 1}}} \right) ,
\end{equation*}
and the remaining term is
\begin{align*}
R_{q - 1}^{rem} = & {w_{q - 1}} \otimes \left( {{u_{q - 1}} - {{\bar u}_{q - 1}}} \right) + \left( {{u_{q - 1}} - {{\bar u}_{q - 1}}} \right) \otimes {w_{q - 1}} - \frac{2}{3}\left( {{u_{q - 1}} - {{\bar u}_{q - 1}}} \right) \cdot {w_{q - 1}}I \\
& + {w_{c,q - 1}} \otimes {w_{q - 1}} + {w_{o,q - 1}} \otimes {w_{c,q - 1}} - \frac{1}{3}\left( {{{\left| {{w_{c,q - 1}}} \right|}^2} + 2{w_{o,q - 1}} \cdot {w_{c,q - 1}}} \right)I .
\end{align*}
\par
Owing to $\sum\limits_{{l \in  \left[ {0,\left\lceil {{\mu _q}T} \right\rceil } \right]}} {\chi _l^2}  = 1$, ${\text{div}} \sum\limits_{l \in \left[ {0,{\mu _q}} \right]} {\chi _l^2{{\rho _{q - 1}}}\left( t \right)I}  = \mathbf{0}$ and ${\text{div}}\left( {\mathcal{W}_q \otimes \mathcal{W}_q^{loc} } \right) = \mathbf{0}$, we derive that
\begin{equation*}
\mathrm{d}{v_q} + \mathrm{div}\, \left( {{u_q} \otimes {u_q}} \right) \mathrm{d}t + \nabla {p_q}\mathrm{d}t  - \mathrm{div} \, \mathring{R}_q \mathrm{d}t = 0 , \ \ \mathbb{P}\mbox{-a.s.} 
\end{equation*}
Since the perturbation $w_{q - 1} \left( {t,x} \right) $ is divergence-free, we have
\begin{equation*}
\mathrm{div}\, {u_q} = \mathrm{div}\, {u_{q - 1}} + \mathrm{div}\, {w_{q - 1}} + \mathrm{div}\, \mathcal{W}_q^{loc} = 0 ,
\end{equation*}
which implies that $\mathrm{div}\, \left( {{u_q} \otimes {u_q}} \right) = \left( {{u_q} \cdot \nabla } \right){u_q}$. The proof is complete. \qed
%%%%%%%%%%%%%%%%%%%%%%%%%%%%%%%%%%%%%%%%%%%%%%%%%%%%%%%%%%%%%%%%%%%%%%%
\subsection{Inductive estimates}\label{S-3-3e}
\begin{proposition}
For any $t \in \left[ {0, T \wedge \mathfrak{t}} \right]$ and $N \in \mathbb{Z}^+ $, the $q$-step Euler--Reynolds system \eqref{ie-sbe-c} possesses inductive estimates
\begin{equation}
{\left\| {{v_q}} \right\|_{{\mathcal{C}^0}}}  + {\left\| {{u_q}} \right\|_{{\mathcal{C}^0}}}  + {\left\| {{p_q}} \right\|_{{\mathcal{C}^0}}}  \leqslant  C , \ \ \mathbb{P}\mbox{-a.s.} ,  \label{IJSUHD-A-B-6}
\end{equation}
\begin{equation}
{\left\| {{u_q}} \right\|_{{\mathcal{C}^N}}}  + {\left\| {{v_q}} \right\|_{{\mathcal{C}^N}}}  \lesssim \delta _q^{\frac{1}{2}}\lambda _q^N  , \ \ \mathbb{P}\mbox{-a.s.} ,   \label{IJSUHD-A-B-3}
\end{equation}
\begin{equation}
{\left\| {{p_q}} \right\|_{{\mathcal{C}^N}}} \lesssim \delta _q \lambda _q^N  , \ \ \mathbb{P}\mbox{-a.s.} ,   \label{IJSUHD-A-B-3dfg}
\end{equation}
and
\begin{equation}
\mathbb{E} \left\| {{\mathcal{W}_q}  -   {\mathcal{W}_{q - 1}} } \right\|_{{\mathcal{C}^{N-1}}}^2 \lesssim \lambda _q^{2\left( {N - \frac{5}{3}  } \right)} .  \label{IJSUHD-A-B-2}
\end{equation}
\end{proposition}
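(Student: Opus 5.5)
The plan is to argue by induction on $q$, taking the $(q-1)$-step estimates \eqref{IJSUHD-A-1-KDJI}--\eqref{IJSUHD-A-2} as given, with Lemma \ref{IDUNHU-A-1} as the base case. All bounds come from the decompositions \eqref{P-Ie} and \eqref{Ie-SBE-C-1}, the perturbation bounds \eqref{E-OS-SBE-1}--\eqref{E-OS-SBE-2}, and the pathwise bound \eqref{BCBSJFHIG-SFG} on the Brownian motions on $[0,T\wedge\mathfrak{t}]$.

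\textbf{Step 1: higher norms of $v_q$ and $u_q$.} I would write $v_q=v_{q-1}+w_{o,q-1}+w_{c,q-1}$. The inductive hypothesis gives $\delta_{q-1}^{1/2}\lambda_{q-1}^N$, which is $\lesssim\delta_q^{1/2}\lambda_q^N$ because $\lambda_q\gg\lambda_{q-1}$ and $c>1$. The term $w_{o,q-1}$ contributes $\delta_q^{1/2}\lambda_q^N$ by \eqref{E-OS-SBE-1}. The corrector $w_{c,q-1}$ contributes $\mu_q^{-1}\delta_{q-1}^{1/2}\delta_q^{1/2}\lambda_{q-1}\lambda_q^N\lesssim\lambda_q^{-\alpha}\delta_q^{1/2}\lambda_q^N$ by the left inequality in \eqref{OS-SBE-C-1}.

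For $u_q$ there is the additional term $\mathcal{W}_q^{loc}$. By \eqref{BCBSJFHIG-SFG}, $\|\mathcal{W}_q^{loc}\|_{\mathcal{C}^N}\lesssim |b_q|\,\delta_q^{1/2}\delta_{q+1}\lambda_q^{2/3-2\alpha}\lambda_q^{-2/3}\lambda_q^N$, and since $\mathfrak{G}<\infty$ gives $|b_q|\le\mathfrak{G}^{1/2}$, this is $\ll\delta_q^{1/2}\lambda_q^N$. Here the stopping time $\mathfrak{t}$ is exactly what converts the Wiener increments into deterministic pathwise bounds.

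\textbf{Step 2: $\mathcal{C}^0$ bounds.} For \eqref{IJSUHD-A-B-6} I would sum the increments telescopically: $\|v_q\|_{\mathcal{C}^0}\le\|v_1\|_{\mathcal{C}^0}+\sum_{j\le q}\|w_{j-1}\|_{\mathcal{C}^0}\lesssim\sum_j\delta_j^{1/2}$. This series converges by the double-exponential decay of $\delta_j$, so the constant $C$ does not grow with $q$. The same argument applies to $u_q$, adding $\sum_j\|\mathcal{W}_j^{loc}\|_{\mathcal{C}^0}$, and to $p_q$ via the increments in \eqref{Ie-SBE-C-1}.

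\textbf{Step 3: the pressure.} I would bound each increment in \eqref{Ie-SBE-C-1} with Lemma \ref{STN-HCS-A-sjdij}.
\begin{itemize}
\item $|w_{o,q-1}|^2$ gives $\delta_q\lambda_q^N$.
\item $w_{o,q-1}\cdot w_{c,q-1}$ and $|w_{c,q-1}|^2$ are smaller by the factor $\mu_q^{-1}\delta_{q-1}^{1/2}\lambda_{q-1}\lesssim\lambda_q^{-\alpha}$.
\item $(u_{q-1}-\bar u_{q-1})\cdot w_{q-1}$ is estimated by \eqref{JUIDHI-AS-A} and \eqref{E-OS-SBE-1}. Its bound is $\kappa_{q-1}\delta_{q-1}^{1/2}\lambda_{q-1}^{N+1}\delta_q^{1/2}+\kappa_{q-1}\delta_{q-1}^{1/2}\lambda_{q-1}\delta_q^{1/2}\lambda_q^N$, which is again $\lesssim\delta_q\lambda_q^N$ once the definition of $\kappa_{q-1}$ and the parameter relations ($\alpha$ small, $b,c$ as chosen) are inserted.
\end{itemize}
The inherited $p_{q-1}$ term satisfies $\delta_{q-1}\lambda_{q-1}^N\lesssim\delta_q\lambda_q^N$.

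\textbf{Step 4: the noise increment.} For \eqref{IJSUHD-A-B-2}, $\mathcal{W}_q-\mathcal{W}_{q-1}=\mathcal{W}_q^{loc}$ is a single Fourier mode. Hence $\|\mathcal{W}_q^{loc}\|_{\mathcal{C}^{N-1}}\lesssim|b_q||\mathfrak{B}_q(t)|\lambda_q^{N-1-2/3}$, and $\mathbb{E}|\mathfrak{B}_q(t)|^2\le T$ gives $\lambda_q^{2(N-5/3)}$.

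\textbf{Main obstacle.} I expect the only delicate point to be the mixed term $(u_{q-1}-\bar u_{q-1})\cdot w_{q-1}$ in Step 3. There the specific mollification scale $\kappa_{q-1}$ must be checked against $\delta_q\lambda_q^N$ using $c>5/2$ and $\alpha$ small. I would first try the crude bound $\kappa_{q-1}\lambda_{q-1}\le 1$ together with $\delta_{q-1}^{1/2}\delta_q^{1/2}\lambda_{q-1}^{N}\lesssim\delta_q\lambda_q^{N}$, and fall back on the explicit exponents if that is insufficient.
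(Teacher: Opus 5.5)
Your proposal is correct and follows the paper's argument: the triangle inequality applied to \eqref{P-Ie} and \eqref{Ie-SBE-C-1}, using \eqref{E-OS-SBE-1}--\eqref{E-OS-SBE-2}, \eqref{JUIDHI-AS-A}, the pathwise bound \eqref{BCBSJFHIG-SFG} and Lemma~\ref{STN-HCS-A-sjdij}; your telescoping $\mathcal{C}^0$ sums and the single-mode computation for $\mathcal{W}_q^{loc}$ simply spell out what the paper calls ``easy to check''. One correction to your fallback: the crude bound $\kappa_{q-1}\lambda_{q-1}\le 1$ does not handle $\|u_{q-1}-\bar u_{q-1}\|_{\mathcal{C}^0}\|w_{q-1}\|_{\mathcal{C}^N}$, since it leaves $\delta_{q-1}^{1/2}\delta_q^{1/2}\lambda_q^N\gg\delta_q\lambda_q^N$, so you do need the explicit check $\kappa_{q-1}\delta_{q-1}^{1/2}\lambda_{q-1}\lesssim\delta_q^{1/2}$, which by the definition of $\kappa_{q-1}$ reduces to $\tfrac{5}{8}<\tfrac{3}{4}cb$ and therefore holds.
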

\par
{\noindent}{\bf{Proof}}. 
According to \eqref{P-Ie-AAA-F-2} and \eqref{P-Ie}, it is easy to check that \eqref{IJSUHD-A-B-6} and \eqref{IJSUHD-A-B-2} hold. For $N \geqslant 1$, it follows from \eqref{OS-SBE-C-2}, \eqref{BCBSJFHIG-SFG}, \eqref{IJSUHD-A-3}, \eqref{E-OS-SBE-1} and \eqref{E-OS-SBE-2} that
\begin{equation*}
{\left\| {{v_q}} \right\|_{{\mathcal{C}^N}}} \leqslant {\left\| {{v_{q - 1}}} \right\|_{{\mathcal{C}^N}}} + {\left\| {{w_{q - 1}}} \right\|_{{\mathcal{C}^N}}}  \lesssim \delta _q^{\frac{1}{2}}\lambda _q^N  ,
\end{equation*}
and
\begin{equation*}
{\left\| {{u_q}} \right\|_{{\mathcal{C}^N}}}  \leqslant  {\left\| {{u_{q - 1}}} \right\|_{{\mathcal{C}^N}}} + {\left\| {{w_{q - 1}}} \right\|_{{\mathcal{C}^N}}} + {\left\| {\mathcal{W}_q^{loc}} \right\|_{{\mathcal{C}^N}}} \lesssim  \delta _q^{\frac{1}{2}}\lambda _q^N .
\end{equation*}
Employing \eqref{OS-SBE-C-2}, \eqref{JUIDHI-AS-A}, \eqref{Ie-SBE-C-1} and Lemma \ref{STN-HCS-A-sjdij}, we obtain
\begin{align*}
\left\| {{p_q}} \right\|_{{\mathcal{C}^N}} \leqslant & \left\| {{p_{q - 1}}} \right\|_{{\mathcal{C}^N}} + {\left\| {{w_{q - 1}}} \right\|_{{\mathcal{C}^N}}}{\left\| {{w_{q - 1}}} \right\|_{{\mathcal{C}^0}}} \\
& + {\left\| {{v_{q - 1}} - {{\bar v}_{q - 1}}} \right\|_{{\mathcal{C}^0}}}{\left\| {{w_{q - 1}}} \right\|_{{\mathcal{C}^N}}} + {\left\| {{v_{q - 1}} - {{\bar v}_{q - 1}}} \right\|_{{\mathcal{C}^N}}}{\left\| {{w_{q - 1}}} \right\|_{{\mathcal{C}^0}}} \\
\lesssim & \delta _q \lambda _q^N .
\end{align*}
The proof is complete. \qed
\par
\begin{proposition}
For any $t \in \left[ {0, T \wedge \mathfrak{t}} \right]$, there exist a real number $\alpha  \in \left( {0,1} \right)$ and a random constant $\mathbb{P}\left\{ {C\left( \omega  \right) <  + \infty } \right\} = 1$ such that the $q$-step Reynolds stress $\mathring{R}_q\left( {t,x} \right)$ satisfies the inductive estimates
\begin{equation}
{\left\| {\mathring{R}_q} \right\|_{{\mathcal{C}^0}}} +  \lambda _q^{ - 1} {\left\| {\mathring{R}_q} \right\|_{{\mathcal{C}^1}}}  +   \lambda _q^{ - 2} {\left\| {\mathring{R}_q} \right\|_{{\mathcal{C}^2}}} \lesssim C\left( \omega  \right) {\delta _{q+1}}\lambda _q^{ - \alpha } , \ \ \mathbb{P}\mbox{-a.s.} ,  \label{IJSUHD-A-B-4}
\end{equation}
and
\begin{equation}
{\left\| {{\mathfrak{I}_{t,l,{q}}}\mathring{R}_q} \right\|_{{\mathcal{C}^0}}} \lesssim C\left( \omega  \right) \mu _{q + 1}^{ - 1} \delta _q^{\frac{1}{2}}{\delta _{q + 1}}\lambda _q^{1 - \alpha } , \ \ \mathbb{P}\mbox{-a.s.}   \label{IJSUHD-A-B-5}
\end{equation}
\end{proposition}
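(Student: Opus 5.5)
We will estimate each of the five pieces of the decomposition \eqref{rsyduf-a-1} separately, first for the $\mathcal{C}^0$, $\mathcal{C}^1$, $\mathcal{C}^2$ norms in \eqref{IJSUHD-A-B-4} and then for the material integral in \eqref{IJSUHD-A-B-5}. For each term $R$ it suffices to prove $\|R\|_{\mathcal{C}^0}\lesssim \delta_{q+1}\lambda_q^{-\alpha}$ with an extra factor $\lambda_q^N$ per derivative, since $\mathcal{R}$ acting on a function of frequency $\lambda_q$ gains $\lambda_q^{-1}$ up to $\lambda_q^{\alpha}$ losses. The tool for this is a stationary-phase argument. For $e^{\mathtt{i}\lambda_q\mathbf{k}\cdot x}$ times amplitudes whose $\mathcal{C}^N$ norms grow like $\kappa_{q-1}^{-N}$ or $\lambda_q^{N(1-\varpi)}$ (see \eqref{IJUHCJD-D-D-A-3}, \eqref{IJUHCJD-D-D-A-4}), we use the estimate
\begin{equation*}
\|\mathcal{R}(a e^{\mathtt{i}\lambda_q\mathbf{k}\cdot\Phi_l})\|_{\mathcal{C}^\alpha}\lesssim \lambda_q^{\alpha-1}\|a\|_{\mathcal{C}^0}+\lambda_q^{\alpha-m}\bigl(\|a\|_{\mathcal{C}^{m+\alpha}}+\|a\|_{\mathcal{C}^0}\|\Phi_l\|_{\mathcal{C}^{m+\alpha}}\bigr),
\end{equation*}
together with \eqref{SJKFHASJJFN}, so that the $m$-dependent tail is negligible.

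\textbf{Deterministic terms.} These follow \cite{MR3374958}.
\begin{itemize}
\item \emph{Oscillation error.} By Lemmas \ref{BFL-1}--\ref{BFL-2}, the self-interaction $w_{\mathbf{k}l}\otimes w_{-\mathbf{k}l}$ cancels $\chi_l^2(\rho I-\bar R_l)$ up to gradients. The remaining $\mathbf{k}+\mathbf{k}'\neq0$ terms have frequency $\sim\lambda_q$ and amplitude $\delta_q$. Lemma \ref{BFL-1} turns their divergence into gradients plus terms containing a derivative of $a_{\mathbf{k}l}$ or $\mathfrak{D}\Phi_l-I$. Using \eqref{IJUHCJD-D-D-A-2-dfjgtyu-3e-djfjgjas} and \eqref{IJUHCJD-D-D-A-3}, this gives $\delta_q\delta_{q-1}^{1/2}\lambda_{q-1}\mu_q^{-1}$ up to $\lambda_q^\alpha$.
\item \emph{Nash error.} Writing $\partial_t+\bar u_{q-1}\cdot\nabla=\mathfrak{D}_{t,q-1}$, we get $\mathcal{R}(\mathfrak{D}_{t,q-1}w)+\mathcal{R}(w\cdot\nabla\bar u_{q-1})$. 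By \eqref{E-OS-SBE-3}--\eqref{E-OS-SBE-4} this is bounded by $\lambda_q^{\alpha-1}(\mu_q\delta_q^{1/2}+\delta_q^{1/2}\delta_{q-1}^{1/2}\lambda_{q-1})$.
\item \emph{Remainder.} We combine \eqref{JUIDHI-AS-A} with \eqref{E-OS-SBE-2}.
\item \emph{Compensative corrector.} This uses the mollification estimates, the transport Lemma \ref{TSP-E-L-A-1} on $\bar R_{q-1}-\bar R_l$, and \eqref{IJSUHD-A-5}. The term $\bar R_{q-1}-\bar R_l$ equals the material integral $\mathfrak{I}_{t,l,q-1}$ of $\bar R_{q-1}$ up to commutators (Lemma \ref{TSsdfP-sE-L-A-1}). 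Hence $\|\bar R_{q-1}-\bar R_l\|_{\mathcal{C}^0}\lesssim \mu_q^{-1}\delta_{q-1}^{1/2}\delta_q\lambda_{q-1}^{1-\alpha}$. This is exactly why \eqref{IJSUHD-A-5} is carried as an inductive hypothesis: it replaces the unavailable bound on $\mathfrak{D}_t\mathring R_{q-1}$.
\end{itemize}
Each bound is then compared with $\delta_{q+1}\lambda_q^{-\alpha}$ using the choice of $\mu_q$ in \eqref{OS-SBE-C-1}. Balancing $\mu_q\delta_q^{1/2}\lambda_q^{-1}$ against $\mu_q^{-1}\delta_q\delta_{q-1}^{1/2}\lambda_{q-1}$ gives the Nash bound $\delta_q^{3/4}\delta_{q-1}^{1/4}\lambda_{q-1}^{1/2}\lambda_q^{-1/2}$. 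With $\delta_q=a^{-b^q}$ and $\lambda_q\sim a^{cb^{q+1}}$, this is $\lesssim\delta_{q+1}\lambda_q^{-2\alpha}$ precisely under $c>5/2$ (and $b$ close to $1$). This is the constraint behind $\gamma<1/5$.

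\textbf{Stochastic error.} Since $(\mathcal{W}\cdot\nabla)\mathcal{W}=0$ and $\mathcal{W}_q^{loc}$ depends only on $x_3$ at frequency $\lambda_q$, $R^{sto}$ is $\mathcal{R}$ applied to products of $\mathcal{W}_q^{loc}$ with $w_{q-1}$ and $v_{q-1}$. By \eqref{BCBSJFHIG-SFG}, $\|\mathcal{W}_q^{loc}\|_{\mathcal{C}^N}\lesssim \delta_q^{1/2}\delta_{q+1}\lambda_q^{N-2\alpha}$. Multiplying by $\|w_{q-1}\|_{\mathcal{C}^1}+\|v_{q-1}\|_{\mathcal{C}^1}\lesssim \delta_q^{1/2}\lambda_q$, and using that $\mathcal{R}\,\mathrm{div}$ is order zero, gives $\delta_{q+1}\lambda_q^{-\alpha}$ with room to spare. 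The random constant $C(\omega)$ enters here and through \eqref{IJSUHD-A-4}--\eqref{IJSUHD-A-5}. It is finite a.s. by Lemma \ref{LEM-ST-A-1-TT}.

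\textbf{Material integration estimate \eqref{IJSUHD-A-B-5}.} This is the main obstacle. The time derivative of $\mathcal{W}_q^{loc}$ does not exist, so $\mathfrak{I}_{t,l,q}$ must be understood with $\int_{t_l}^t\mathrm{d}(\cdot)$ as an Itô differential. My plan is to write $\mathfrak{I}_{t,l,q}\mathring R_q=\mathring R_q(t)-\mathring R_q(t_l)+\int_{t_l}^t\bar u_q\cdot\nabla\mathring R_q\,\mathrm{d}s$ and treat the two kinds of terms separately.
\begin{itemize}
\item \emph{Deterministic pieces.} Their material derivatives relative to $\bar u_q$ are estimated as in \cite{MR3374958}, using \eqref{E-OS-SBE-3}--\eqref{E-OS-SBE-4} and the commutator Lemma \ref{TSsdfP-sE-L-A-1} to pass $\mathfrak{D}_t$ through $\mathcal{R}$. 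Each gives a factor $\mu_{q+1}$ times the $\mathcal{C}^0$ size, integrated over a window of length $\mu_{q+1}^{-1}$. The difference between $\bar u_q$ and $\bar u_{q-1}$ contributes a transport term bounded via $\|\mathring R_q\|_{\mathcal{C}^1}\|\bar u_q\|_{\mathcal{C}^0}$ times the window length.
\item \emph{Stochastic piece.} The increments $\mathfrak{B}_q(t)-\mathfrak{B}_q(t_l)$ are bounded pathwise by \eqref{BCBSJFHIG-SFG}. The term $\int_{t_l}^t\bar u_q\cdot\nabla R^{sto}\,\mathrm{d}s$ is bounded by the window length times $\lambda_q\|R^{sto}\|_{\mathcal{C}^0}$, and the factor $\delta_{q+1}$ in the stopping time absorbs the loss.
\end{itemize}
Collecting everything gives $\mu_{q+1}^{-1}\delta_q^{1/2}\delta_{q+1}\lambda_q^{1-\alpha}$, possibly after enlarging $C(\omega)$. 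The delicate point I expect to fight with is making the window-length gain $\mu_{q+1}^{-1}$ rigorous for the pure increment terms $\mathring R_q(t)-\mathring R_q(t_l)$, which have no time derivative.
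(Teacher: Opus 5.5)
\textbf{Verdict: genuine gap.} Your treatment of \eqref{IJSUHD-A-B-4} is in line with the paper. The gap is in \eqref{IJSUHD-A-B-5}.

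\textbf{Where the plan for \eqref{IJSUHD-A-B-5} fails.} You place the temporal roughness only in $\mathcal{W}_q^{loc}$. You then propose to treat the Nash, oscillation, remainder and compensative pieces by material-derivative bounds as in the deterministic scheme. Only the oscillation error allows this. The velocity $\bar u_{q-1}$ contains $\bar{\mathcal{W}}_{q-1}$, and this noise enters the other pieces as follows.
\begin{itemize}
\item $R^{Nash}_{q-1}$: through $w_{q-1}\cdot\nabla\bar u_{q-1}$, and through $\mathfrak{D}_{t,q-1}w_{c,q-1}$, since both $\mathfrak{D}_{t,q-1}\nabla a_{\mathbf{k}l}$ and $\mathfrak{D}_{t,q-1}\mathfrak{D}\Phi_l$ produce $\nabla\bar u_{q-1}$.
\item $R^{rem}_{q-1}$: it contains $u_{q-1}-\bar u_{q-1}$, hence $\mathcal{W}_{q-1}-\bar{\mathcal{W}}_{q-1}$.
\item $R^{com}_{q-1}$: it contains $\mathring R_{q-1}-\bar R_{q-1}$ and $\bar R_{q-1}-\bar R_l$, and $\mathring R_{q-1}$ is itself only a semimartingale in time.
\end{itemize}
For these terms $\mathfrak{D}_{t,q}$ does not exist. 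The only stochastic input you use, \eqref{BCBSJFHIG-SFG}, is a sup bound and gives no gain from the window length.

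Bounding an increment by twice its sup is not enough.
\begin{itemize}
\item For a piece of size $\delta_{q+1}\lambda_q^{-\alpha}$, it misses the target $\mu_{q+1}^{-1}\delta_q^{1/2}\delta_{q+1}\lambda_q^{1-\alpha}$ by the factor $\mu_{q+1}/(\delta_q^{1/2}\lambda_q)\gtrsim\lambda_{q+1}^{\alpha}$. This follows from \eqref{OS-SBE-C-1} at level $q+1$.
\item For $\mathring R_{q-1}-\bar R_{q-1}$, the available bound $\kappa_{q-1}\delta_q\lambda_{q-1}^{1-\alpha}$ also exceeds the target by a positive power of the frequencies.
\item It does suffice for $R^{sto}_{q-1}$, which carries an extra factor $\delta_q^{1/2}$, because $\mu_{q+1}\lesssim\lambda_q$ for $b$ near $1$ and $\alpha$ small. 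So your stochastic bullet is fine.
\end{itemize}

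The same arithmetic shows a bookkeeping problem. ``A factor $\mu_{q+1}$ times the $\mathcal{C}^0$ size over a window of length $\mu_{q+1}^{-1}$'' only reproduces the $\mathcal{C}^0$ bound. The material derivative must be of size $\delta_q^{1/2}\lambda_q\cdot\delta_{q+1}\lambda_q^{-\alpha}$, for example $\mu_q$ times the $\mathcal{C}^0$ size; here $\mu_q\lesssim\delta_q^{1/2}\lambda_q$ by \eqref{OS-SBE-C-1}.

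\textbf{The missing idea: the paper's material integration step.} Each increment is expanded by It\^o's formula, using It\^o--Wentzell along the flow $X_l$ for $\bar R_{q-1}-\bar R_l$. This splits it into two kinds of terms.
\begin{itemize}
\item A $\mathrm{d}\tau$-part, estimated with \eqref{E-OS-SBE-1}--\eqref{E-OS-SBE-4} and commutator bounds.
\item Stochastic integrals: $\int_{t_l}^t w_{q-1}\cdot\nabla\,\mathrm{d}\hat{\mathcal{W}}_{q-1}$, $\int_{t_l}^t w_{q-1}\cdot\mathrm{d}(\mathcal{W}_{q-1}-\bar{\mathcal{W}}_{q-1})$, and the martingale part of $\mathrm{d}\mathring R_{q-1}=\mathcal{A}_R\,\mathrm{d}t+\sum_{j\leqslant q-1}\tilde H_j\,\mathrm{d}\mathfrak{B}_j$.
\end{itemize}
The stochastic integrals are bounded pathwise through the $(\frac12-\alpha)$-H\"older seminorms $[\mathfrak{B}_j]_{\frac12-\alpha}$. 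These give $|t-t_l|^{\frac12-\alpha}\lesssim\mu_{q+1}^{-\frac12+\alpha}$. The shortfall relative to a full $\mu_{q+1}^{-1}$ is compensated by two gains: $\nabla\mathcal{W}_j$ with $j\leqslant q-1$ carries only the factor $\lambda_j^{1/3}$, and $\mathcal{R}$ gains $\lambda_q^{-1}$.

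The seminorms are gathered into $C(\omega)=r_q\sup_q[\mathfrak{B}_q]_{\frac12-\alpha}$ with square-summable weights $r_q$. This is where the almost surely finite random constant comes from. Lemma \ref{LEM-ST-A-1-TT} only gives $\mathfrak{t}>0$ and, via \eqref{BCBSJFHIG-SFG}, deterministic sup bounds.

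So the ``delicate point'' you flag at the end is the core of the proof, not a technicality, and your proposal leaves it unresolved.
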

\par
{\noindent}{\bf{Proof}}.
We estimate each component of the $q$-step Reynolds stress $\mathring{R}_q\left( {t,x} \right)$ defined by \eqref{rsyduf-a-1}, respectively.
\par
\vspace{1em}
{\textbf{Step 1. Estimates of the Nash error $R_{q - 1}^{Nash}$}}. 
\par
Denote $L_{{\mathbf{k}}l} \triangleq a_{{\mathbf{k}}l}{{\mathbf{B}}_{\mathbf{k}}} + \left( {\mathtt{i} \lambda _q^{ - 1}\nabla a_{{\mathbf{k}}l} - a_{{\mathbf{k}}l}\left( {\mathfrak{D}{\Phi _l} - I} \right){\mathbf{k}}} \right) \times {\left| {\mathbf{k}} \right|^{ - 2}}\left( {{\mathbf{k}} \times {{\mathbf{B}}_{\mathbf{k}}}} \right)$. Then, the perturbation $w_{q - 1}$ is written as
\begin{equation*}
{w_{q - 1}} = \sum\limits_{\left( {{\mathbf{k}},l} \right)} {{\chi _l}L_{{\mathbf{k}}l}\exp \left( {\mathtt{i} {\lambda _q}{\mathbf{k}} \cdot {\Phi _l}} \right)} .
\end{equation*}
Thanks to ${\mathfrak{D}_{t,q - 1}}{a_{{\mathbf{k}}l}} = 0$ and ${\mathfrak{D}_{t,q - 1}}\nabla {a_{{\mathbf{k}}l}} = \rho _{q - 1}^{ - \frac{1}{2}} \left[ {{{\bar u}_{q - 1}} \cdot \nabla ,\nabla {{\bar R}_l}} \right] \mathfrak{D}\psi $, it follows from \eqref{IJUHCJD-D-D-A-2-dfjgtyu-3e-djfjgjas}-\eqref{IJUHCJD-D-D-A-3} and Lemma \ref{TSsdfP-sE-L-A-1} that
\begin{equation}
{\left\| {L_{{\mathbf{k}}l}} \right\|_{{\mathcal{C}^0}}}  \lesssim \delta _q^{\frac{1}{2}}, \ \ \ \ {\left\| {L_{{\mathbf{k}}l}} \right\|_{{\mathcal{C}^N}}}  \lesssim \delta _q^{\frac{1}{2}}{\lambda _{q - 1}}\kappa _{q - 1}^{1 - N} ,  \label{IJSUHD-A-B-5-proof-a-1}
\end{equation}
\begin{align}
{\left\| {{\mathfrak{D}_{t,q - 1}}{L_{{\mathbf{k}}l}}} \right\|_{{\mathcal{C}^0}}} \lesssim  & \lambda _q^{ - 1} \delta _{q}^{- \frac{1}{2}} {\left\| {\left[ {{{\bar u}_{q - 1}} \cdot \nabla ,\nabla } \right]{{\bar R}_l}} \right\|_{_{{\mathcal{C}^0}}}} + \delta _q^{\frac{1}{2}}{\left\| {\mathfrak{D}{\Phi _l}} \right\|_{{\mathcal{C}^0}}}{\left\| {{{\bar u}_{q - 1}}} \right\|_{{C^1}}} \nonumber \\
\lesssim & \delta _{q - 1}^{\frac{1}{2}} \delta _{q}^{\frac{1}{2}} \lambda _{q - 1}^{2 - \alpha }\lambda _q^{ - 1} + \delta _{q - 1}^{\frac{1}{2}}\delta _q^{\frac{1}{2}}{\lambda _{q - 1}} ,  \label{IJSUHD-A-B-5-proof-a-2}
\end{align}
and
\begin{align}
\mu _{q + 1} {\left\| {{\mathfrak{I} _{t,l,q}}{\mathfrak{D}_{t,q - 1}}{L_{{\mathbf{k}}l}}} \right\|_{{\mathcal{C}^0}}} \lesssim  \lambda _q^{  - 1} \delta _{q}^{- \frac{1}{2}}  {\left\| {\left[ {{{\bar u}_q} \cdot \nabla ,\left[ {{{\bar u}_{q - 1}} \cdot \nabla ,\nabla } \right]} \right]{{\bar R}_l}} \right\|_{_{{\mathcal{C}^0}}}} \lesssim  \delta _{q - 1}^{\frac{1}{2}}{\delta _q}\lambda _{q - 1}^{2 - \alpha } .  \label{IJSUHD-A-B-5-proof-a-3}
\end{align}
\par
Notice that
\begin{equation*}
R_{q - 1}^{Nash} =  \mathcal{R}\sum\limits_{\left( {{\mathbf{k}},l} \right)} {\left( {L_{{\mathbf{k}}l} {\partial _t}{\chi _l} + {\chi _l}{{ \mathfrak{D}}_{t,q - 1}}L_{{\mathbf{k}}l}  + {\chi _l}L_{{\mathbf{k}}l}  \cdot \nabla {{\bar u}_{q - 1}}} \right){\phi _{{\mathbf{k}}l}}\exp \left( {{\text{i}}{\lambda _q}{\mathbf{k}} \cdot x} \right)} .
\end{equation*}
Using the same argument as Buckmaster, De Lellis, Isett, Sz\'{e}kelyhidi \cite[Proposition $5.1$]{MR3374958}, it follows from \eqref{IJSUHD-A-B-5-proof-a-1}, \eqref{IJSUHD-A-B-5-proof-a-2} and the stationary phase estimate (see \cite[Appendix G]{MR3374958}) that
\begin{equation*}
{\left\| {R_{q - 1}^{Nash}} \right\|_{{\mathcal{C}^0}}} + \lambda _q^{ - 1}{\left\| {R_{q - 1}^{Nash}} \right\|_{{\mathcal{C}^1}}} + \lambda _q^{ - 2}{\left\| {R_{q - 1}^{Nash}} \right\|_{{\mathcal{C}^2}}}  \lesssim {\mu _q}\delta _q^{\frac{1}{2}}\lambda _q^{\alpha  - 1} + \delta _{q - 1}^{\frac{1}{2}}\delta _q^{\frac{1}{2}}{\lambda _{q - 1}}\lambda _q^{\alpha  - 1} . 
\end{equation*}
According to the definition of $\mu _q$, we have
\begin{equation}
{\mu _q}\delta _q^{\frac{1}{2}}\delta _{q + 1}^{ - 1}\lambda _q^{2\alpha  - 1} \sim \delta _{q - 1}^{\frac{1}{4}}\delta _q^{\frac{3}{4}}\delta _{q + 1}^{ - 1}\lambda _{q - 1}^{\frac{1}{2}}\lambda _q^{3\alpha  - \frac{1}{2}} \lesssim {a^{{b^{q - 1}}f\left( {\alpha ,b,c} \right)}}  , \label{JKAJHF-SDGDAF}
\end{equation}
where $f\left( {\alpha ,b,c} \right) = \left[ {\left( {3\alpha  - \frac{1}{2}} \right)c + 1} \right]{b^2} + \left( {\frac{1}{2}c - \frac{3}{4}} \right)b - \frac{1}{4}$. To achieve the inductive estimate \eqref{IJSUHD-A-4}, we choose $c \to {\frac{5}{2}}^+$, $b \to {1^ + }$ and $\alpha \ll 1$ to arrive at ${\partial _b}f\left( {\alpha ,1,c} \right) < 0$ and $f\left( {\alpha ,b,c} \right) < 0$. Therefore,
\begin{equation*}
{\left\| {R_{q - 1}^{Nash}} \right\|_{{\mathcal{C}^0}}} + \lambda _q^{ - 1}{\left\| {R_{q - 1}^{Nash}} \right\|_{{\mathcal{C}^1}}} + \lambda _q^{ - 2}{\left\| {R_{q - 1}^{Nash}} \right\|_{{\mathcal{C}^2}}}  \lesssim {\delta _{q + 1}}\lambda _q^{ - \alpha }  .
\end{equation*}
\par
For the material integration of ${R_{q - 1}^{Nash}}$, we have
\begin{align}
{\mathfrak{I} _{t,l,q}}R_{q - 1}^{Nash} = & \mathcal{R}{\mathfrak{I} _{t,l,q}}{\mathfrak{D}_{t,q - 1}}{w_{q - 1}}  + \mathcal{R}{\mathfrak{I} _{t,l,q}}{\text{div}}\left( {{w_{q - 1}} \otimes {{\bar u}_{q - 1}}} \right) \nonumber \\
& + \int_{{t_l}}^t {\left[ {{{\bar u}_q} \cdot \nabla ,\mathcal{R}} \right]{\mathfrak{D}_{t,q - 1}}{w_{q - 1}}} {\text{d}}\tau  + \int_{{t_l}}^t {\left[ {{{\bar u}_q} \cdot \nabla ,\mathcal{R}} \right]{\text{div}}\left( {{w_{q - 1}} \otimes {{\bar u}_{q - 1}}} \right)} {\text{d}}\tau  .  \label{IJSUHD-A-B-5-prosdafof-a-3s1}
\end{align}
Considering ${\mathfrak{I} _{t,l,q}}{\text{div}}\left( {{w_{q - 1}} \otimes {{\bar u}_{q - 1}}} \right)$, we have
\begin{align}
{\mathfrak{I} _{t,l,q}}{\text{div}}\left( {{w_{q - 1}} \otimes {{\bar u}_{q - 1}}} \right) = & {\mathfrak{I} _{t,l,q}}{\text{div}}\left( {{w_{q - 1}} \otimes {{\bar v}_{q - 1}}} \right) + {\mathfrak{I} _{t,l,q}}{\text{div}}\left( {{w_{q - 1}} \otimes {{\mathcal{\hat W}}_{q - 1}}} \right) \nonumber \\
= & \int_{{t_l}}^t {{w_{q - 1}} \cdot \nabla } {\text{d}}{{\mathcal{\hat W}}_{q - 1}} + \int_{{t_l}}^t {{\mathfrak{D}_{\tau ,q - 1}}{w_{q - 1}} \cdot \nabla {{\bar v}_{q - 1}} + {w_{q - 1}} \cdot {\mathfrak{D}_{\tau ,q - 1}}\nabla {{\bar v}_{q - 1}}} {\text{d}}\tau  \nonumber \\
& + \int_{{t_l}}^t {\left( {{{\bar u}_q} - {{\bar u}_{q - 1}}} \right) \cdot \left( {\nabla {w_{q - 1}}} \right) \cdot \nabla {{\bar v}_{q - 1}}} {\text{d}}\tau  + \int_{{t_l}}^t {{\mathfrak{D}_{\tau ,q - 1}}{w_{q - 1}} \cdot \nabla {{\mathcal{\hat W}}_{q - 1}}} {\text{d}}\tau  \nonumber \\
& + \int_{{t_l}}^t {{w_{q - 1}} \cdot \left( {\left( {{{\bar u}_q} - {{\bar u}_{q - 1}}} \right) \cdot \nabla } \right)\nabla {{\bar v}_{q - 1}}} {\text{d}}\tau  \nonumber \\
&  + \int_{{t_l}}^t {{w_{q - 1}} \cdot \left( {{{\bar u}_q} \cdot \nabla } \right)\nabla {{\mathcal{\hat W}}_{q - 1}}} {\text{d}}\tau .  \label{IJSUHD-A-B-5-prosdafof-a-3}
\end{align}
For the It\^{o} integral $\int_{{t_l}}^t {{w_{q - 1}} \cdot \nabla } {\text{d}}{{{\mathcal{\bar W}}}_{q - 1}}$, we infer that
\begin{equation*}
{\mu _{q + 1}}{\left\| {\mathcal{R}\int_{{t_l}}^t {{w_{q - 1}} \cdot \nabla } {\text{d}}{{ \mathcal{\bar W}}_{q - 1}}} \right\|_{{\mathcal{C}^\alpha }}} \lesssim \lambda _q^{\alpha  - 1}\mu _{q + 1}^{\frac{1}{2} + \alpha }{\left\| {{w_{q - 1}}} \right\|_{{\mathcal{C}^{\alpha}}}}\sum\limits_{j \leqslant q - 1} {\lambda _j^{\frac{1}{3}}{{\left[ {{\mathfrak{B}_j}} \right]}_{\frac{1}{2} - \alpha }}}  .
\end{equation*}
Choose a random constant $C\left( \omega  \right) = {r_q} \mathop {\sup }\limits_{q \in {\mathbb{Z}^ + }} {\left[ {{\mathfrak{B}_q}\left( \tau  \right)} \right]_{\frac{1}{2} - \alpha }}$ with ${r_q} = {a^{ - \left( {\frac{{5}}{3} - \varpi } \right){b^{q - 1}}}}$, $\varpi  \in \left[ {\alpha ,1} \right)$. Using the fact that $\mathbb{E}\left[ {{\mathfrak{B}_q}\left( \tau  \right)} \right]_{\frac{1}{2} - \alpha }^2 <  + \infty $ for any $q \in {{\mathbb{Z}^ + }}$, we have
\begin{equation*}
\mathbb{E}{\left( {C\left( \omega  \right)} \right)^2} \lesssim \sum\limits_{q \in {\mathbb{Z}^ + }} {r_q^2}  <  + \infty ,
\end{equation*}
which implies that
\begin{equation*}
\mathbb{P}\left\{ {C\left( \omega  \right) <  + \infty } \right\} = 1 .
\end{equation*}
Then, we rewrite the estimate of $\int_{{t_l}}^t {{w_{q - 1}} \cdot \nabla } {\text{d}}{{{\mathcal{\bar W}}}_{q - 1}}$ as
\begin{align}
{\mu _{q + 1}}{\left\| {\mathcal{R}\int_{{t_l}}^t {{w_{q - 1}} \cdot \nabla } {\text{d}}{{ \mathcal{\bar W}}_{q - 1}}} \right\|_{{\mathcal{C}^\alpha }}} \lesssim & C\left( \omega  \right)\lambda _q^{2\alpha  - 1}   \mu _{q + 1}^{\frac{1}{2} + \alpha } r_q^{ - 1}  \delta _q^{\frac{1}{2}}\lambda _{q - 1}^{\frac{1}{3}} \nonumber \\
\lesssim & C\left( \omega  \right) \delta _q^{\frac{1}{2}}{\delta _{q + 1}}\lambda _q^{1 - \alpha }  .  \label{IJSUHD-A-B-5-prosdafof-a-4}
\end{align}
\par
We next examine the transport term $\int_{{t_l}}^t {{w_{q - 1}} \cdot {\mathfrak{D}_{\tau ,q - 1}}\nabla {{\bar v}_{q - 1}}} {\text{d}}\tau $ in \eqref{IJSUHD-A-B-5-prosdafof-a-3}. Since
\begin{align*}
{\mathfrak{D}_{\tau ,q - 1}}{{\bar v}_{q - 1}} = & {\text{div}}{{\bar R}_{q - 1}} - \nabla {{\bar p}_{q - 1}} - {{\bar u}_{q - 1}} \cdot \nabla {{\mathcal{\hat W}}_{q - 1}}  \\
& + {\text{div}}\left( {{{\bar u}_{q - 1}} \otimes {{\bar u}_{q - 1}} - \left( {{u_{q - 1}} \otimes {u_{q - 1}}} \right)*{\varsigma _{{{\bar \kappa }_{q - 1}}}}} \right) ,
\end{align*}
\begin{equation*}
{\mathfrak{D}_{\tau ,q - 1}}\nabla {{\bar v}_{q - 1}} = \nabla {\mathfrak{D}_{\tau ,q - 1}}{{\bar v}_{q - 1}} + \left[ {{{\bar u}_{q - 1}} \cdot \nabla ,\nabla } \right]{{\bar v}_{q - 1}} ,
\end{equation*}
and
\begin{equation*}
{\left\| {\left( {{u_{q - 1}} \otimes {u_{q - 1}}} \right)*{\varsigma _{{{\hat \kappa }_{q - 1}}}} - {{\hat u}_{q - 1}} \otimes {{\hat u}_{q - 1}}} \right\|_{{\mathcal{C}^{2 + \alpha }}}} \lesssim \kappa _{q - 1}^2\left( {{{\left\| {{u_{q - 1}}} \right\|}_{{\mathcal{C}^1}}}{{\left\| {{u_{q - 1}}} \right\|}_{{\mathcal{C}^{3 + \alpha }}}} + \left\| {{u_{q - 1}}} \right\|_{{\mathcal{C}^{2 + \alpha }}}^2} \right) ,
\end{equation*}
we infer that
\begin{align*}
{\left\| {{\mathfrak{D}_{\tau ,q - 1}}\nabla {{\bar v}_{q - 1}}} \right\|_{{\mathcal{C}^\alpha }}} \lesssim & {\left\| {{{\bar R}_{q - 1}} + {{\bar p}_{q - 1}}} \right\|_{{\mathcal{C}^{2 + \alpha }}}} + {\left\| {{{\bar u}_{q - 1}} \cdot \nabla {{\mathcal{\hat W}}_{q - 1}}} \right\|_{{\mathcal{C}^{1 + \alpha }}}} + {\left\| {{{\bar u}_{q - 1}}} \right\|_{{\mathcal{C}^{1 + \alpha }}}}{\left\| {{{\bar v}_{q - 1}}} \right\|_{{\mathcal{C}^1}}}  \\
& + {\left\| {{{\bar u}_{q - 1}} \otimes {{\bar u}_{q - 1}} - \left( {{u_{q - 1}} \otimes {u_{q - 1}}} \right)*{\varsigma _{{{\bar \kappa }_{q - 1}}}}} \right\|_{{\mathcal{C}^{2 + \alpha }}}} \\
\lesssim &  \kappa _{q - 1}^2{\delta _{q - 1}}\lambda _{q - 1}^{4 + \alpha } + {\delta _{q - 1}}\lambda _{q - 1}^{2 + \alpha }  + \delta _{q - 1}^{\frac{1}{2}}\delta _q^{\frac{1}{2}}{\delta _{q + 1}}\lambda _{q - 1}^{\frac{4}{3} + \alpha }\lambda _q^{\frac{2}{3} - 2\alpha } . 
\end{align*}
Then, we obtain
\begin{equation}
{\mu _{q + 1}}{\left\| {\mathcal{R}\int_{{t_l}}^t {{w_{q - 1}} \cdot {\mathfrak{D}_{\tau ,q - 1}}\nabla {{\bar v}_{q - 1}}} {\text{d}}\tau } \right\|_{{\mathcal{C}^\alpha }}} \lesssim  \delta _q^{\frac{1}{2}}{\delta _{q + 1}}\lambda _q^{1 - \alpha } .  \label{IJSUsdafof-a-4}
\end{equation}
\par
The remaining terms in \eqref{IJSUHD-A-B-5-prosdafof-a-3} can be estimated immediately by using the existing inductive estimates \eqref{BCBSJFHIG-SFG}, \eqref{IJSUHD-A-1-KDJI}, \eqref{IJSUHD-A-3} and \eqref{E-OS-SBE-1}-\eqref{E-OS-SBE-4}, and all of them are bounded by the scale $\mu _{q + 1}^{ - 1}\delta _q^{\frac{1}{2}}{\delta _{q + 1}}\lambda _q^{1 - \alpha }$. Therefore, combining \eqref{IJSUHD-A-B-5-prosdafof-a-3}, \eqref{IJSUHD-A-B-5-prosdafof-a-4} and \eqref{IJSUsdafof-a-4}, we deduce that
\begin{equation}
{\left\| {\mathcal{R}{\mathfrak{I} _{t,l,q}}{\text{div}}\left( {{w_{q - 1}} \otimes {{\bar u}_{q - 1}}} \right)} \right\|_{{\mathcal{C}^0}}} \lesssim \mu _{q + 1}^{ - 1}\delta _q^{\frac{1}{2}}{\delta _{q + 1}}\lambda _q^{1 - \alpha }  .  \label{IJSUsdasdfof-a-4da}
\end{equation}
\par
It follows from \eqref{IJSUHD-A-B-5-proof-a-1}-\eqref{IJSUHD-A-B-5-proof-a-3} that
\begin{align}
{\mu _{q + 1}}{\left\| {\mathcal{R}{\mathfrak{I} _{t,l,q}}{\mathfrak{D}_{t,q - 1}}{w_{q - 1}}} \right\|_{{\mathcal{C}^\alpha }}} \lesssim & {\mu _q}\lambda _q^{2\alpha  - 1}\left( {{\mu _q}{{\left\| {{L_{{\mathbf{k}}l}}} \right\|}_{{\mathcal{C}^0}}} + {{\left\| {{\mathfrak{D}_{t,q - 1}}{L_{{\mathbf{k}}l}}} \right\|}_{{\mathcal{C}^0}}} + \delta _q^{\frac{1}{2}}{{\left\| {{L_{{\mathbf{k}}l}}} \right\|}_{{\mathcal{C}^1}}}} \right) \nonumber \\
& + \delta _q^{\frac{1}{2}}\lambda _q^{2\alpha }\left( {{\mu _q}{{\left\| {{L_{{\mathbf{k}}l}}} \right\|}_{{\mathcal{C}^0}}} + {{\left\| {{\mathfrak{D}_{t,q - 1}}{L_{{\mathbf{k}}l}}} \right\|}_{{\mathcal{C}^0}}}} \right)  \nonumber \\
& + {\mu _{q + 1}}\lambda _q^{2\alpha  - 1}{\left\| {{\mathfrak{I} _{t,l,q}}{\mathfrak{D}_{t,q - 1}}{L_{{\mathbf{k}}l}}} \right\|_{{\mathcal{C}^0}}} \nonumber \\
\lesssim & \delta _q^{\frac{1}{2}}{\delta _{q + 1}}\lambda _q^{1 - \alpha }  .  \label{IJSUHD-A-B-5-psdflugroof-a-3}
\end{align}
Moreover, using the commutator estimate (see \cite[Appendix H]{MR3374958}), we obtain
\begin{align}
{\mu _{q + 1}} {\left\| {\int_{{t_l}}^t {\left[ {{{\bar u}_{q - 1}} \cdot \nabla ,\mathcal{R}} \right]{\mathfrak{D}_{t,q - 1}}{w_{q - 1}}} {\text{d}}\tau } \right\|_{{\mathcal{C}^\alpha }}} \lesssim & \lambda _q^{\alpha  - 1} {\left\| {{{\bar u}_{q - 1}}} \right\|_{{\mathcal{C}^{1 + \alpha }}}}{\left\| {{\mathfrak{D}_{t,q - 1}}{w_{q - 1}}} \right\|_{{\mathcal{C}^\alpha }}} \nonumber \\
\lesssim &  {\mu _q} \delta _{q - 1}^{\frac{1}{2}}\delta _q^{\frac{1}{2}}\lambda _{q - 1}^{1 + \alpha }\lambda _q^{2\alpha  - 1} \nonumber  \\
\lesssim & \delta _q^{\frac{1}{2}}{\delta _{q + 1}}\lambda _q^{1 - \alpha }  , \label{IJSUsdadfga}
\end{align}
and
\begin{align}
& {\mu _{q + 1}}{\left\| {\mathcal{R}\int_{{t_l}}^t {\left[ {{{\bar u}_q} \cdot \nabla ,\mathcal{R}} \right]{\text{div}}\left( {{w_{q - 1}} \otimes {{\bar u}_{q - 1}}} \right)} {\text{d}}\tau } \right\|_{{\mathcal{C}^\alpha }}} \nonumber \\
\lesssim & \lambda _q^{\alpha  - 1}{\left\| {{{\bar u}_q}} \right\|_{{\mathcal{C}^{1 + \alpha }}}}{\left\| {{{\bar u}_q}} \right\|_{{\mathcal{C}^1}}}{\left\| {{w_{q - 1}}} \right\|_{{\mathcal{C}^\alpha }}} \lesssim {\delta _{q - 1}}\delta _q^{\frac{1}{2}}\lambda _{q - 1}^{2 + \alpha }\lambda _q^{2\alpha  - 1}  \nonumber \\
\lesssim & \delta _q^{\frac{1}{2}}{\delta _{q + 1}}\lambda _q^{1 - \alpha }  . \label{IJSUsdadd-sf-fga}
\end{align}
Therefore, it follows from \eqref{IJSUHD-A-B-5-prosdafof-a-3s1}, \eqref{IJSUHD-A-B-5-psdflugroof-a-3}, \eqref{IJSUHD-A-B-5-prosdafof-a-4}, \eqref{IJSUsdadfga} and \eqref{IJSUsdadd-sf-fga} that
\begin{equation*}
{\left\| {{{\mathfrak{I} _{t,l,q - 1}}}R_{q - 1}^{Nash}} \right\|_{{\mathcal{C}^0}}} \lesssim \mu _{q+1}^{ - 1}  \delta _q^{\frac{1}{2}}{\delta _{q + 1}}\lambda _q^{1 - \alpha } . 
\end{equation*}
\par
\vspace{1em}
{\textbf{Step 2. Estimates of the oscillation error $R_{q - 1}^{osc}$}}. 
\par
Denote ${f_{\mathbf{k}l\mathbf{k}'l'}}\left( {t,x} \right) \triangleq \sum\limits_{\left( {{\mathbf{k}},l} \right)} {{\chi _l}\left( t \right){\chi _{l'}}\left( t \right)  {a_{\mathbf{k}l}}\left( {t,x} \right){a_{\mathbf{k}'l'}}\left( {t,x} \right)  {\phi _{{\mathbf{k}}l}}\left( {t,x} \right){\phi _{{\mathbf{k}}'l'}}\left( {t,x} \right)}$. It follows from \eqref{IJUHCJD-D-D-A-3} and \eqref{IJUHCJD-D-D-A-4} that for some $\varpi \in \left[ {\alpha,1} \right)$,
\begin{equation}
{\left\| {{f_{\mathbf{k}l\mathbf{k}'l'}}} \right\|_{{\mathcal{C}^N}}}  \lesssim {\delta _q}{\lambda _{q - 1}}\kappa _{q - 1}^{1 - N} + {\delta _q}\lambda _q^{N\left( {1 - \varpi } \right)} , \ \ N \geqslant 1 .  \label{AIUJBHTGsdfghS-AB-2}
\end{equation}
Notice that ${w_{o, {q-1}}} \otimes {w_{o, {q-1}}} - \sum\limits_{{l \in  \left[ {0,\left\lceil {{\mu _q}T} \right\rceil } \right]}} {\chi _l^2 \left( {{\rho _{q - 1}} I - {{\bar R}_l}} \right)}  = \sum\limits_{\left( {\mathbf{k},l} \right),\left( {\mathbf{k}',l'} \right),\mathbf{k} + \mathbf{k}' \ne 0} {{\chi _l}{\chi _{l'}}{w_{\mathbf{k}l}} \otimes {w_{\mathbf{k}'l'}}} $ . We then have
\begin{align*}
& \mathrm{div} \left( {{w_{o,q - 1}} \otimes {w_{o,q - 1}} - \sum\limits_{{l \in  \left[ {0,\left\lceil {{\mu _q}T} \right\rceil } \right]}} {\chi _l^2 \left( {{\rho _{q - 1}} I - {{\bar R}_l}} \right)}  - \frac{1}{2}\left| {{w_{o,q - 1}}} \right|^ 2 I} \right) \\
= & \sum\limits_{\left( {\mathbf{k},l} \right),\left( {\mathbf{k}',l'} \right),\mathbf{k} + \mathbf{k}' \ne 0} {\left( {{\mathbf{B}_\mathbf{k}} \otimes {\mathbf{B}_{\mathbf{k}'}} - \frac{1}{2}{\mathbf{B}_\mathbf{k}} \cdot {\mathbf{B}_{\mathbf{k}'}}I} \right) \left( {\nabla   {f_{\mathbf{k}l\mathbf{k}'l'}}} \right) \exp \left( {\mathtt{i}{\lambda _q}\left( {\mathbf{k} + \mathbf{k}'} \right) \cdot x} \right)}  \\
& + \mathtt{i}{\lambda _q}\sum\limits_{\left( {\mathbf{k},l} \right),\left( {\mathbf{k}',l'} \right),\mathbf{k} + \mathbf{k}' \ne 0} {\left( {{\mathbf{B}_\mathbf{k}} \otimes {\mathbf{B}_{\mathbf{k}'}} - \frac{1}{2}{\mathbf{B}_\mathbf{k}} \cdot {\mathbf{B}_{\mathbf{k}'}}I} \right)\left( {\mathbf{k} + \mathbf{k}'} \right){f_{\mathbf{k}l\mathbf{k}'l'}}\exp \left( {\mathtt{i}{\lambda _q}\left( {\mathbf{k} + \mathbf{k}'} \right) \cdot x} \right)} .
\end{align*}
Thanks to $\left( {{\mathbf{B}_\mathbf{k}} \otimes {\mathbf{B}_{\mathbf{k}'}} + {\mathbf{B}_{\mathbf{k}'}} \otimes {\mathbf{B}_\mathbf{k}}} \right)\left( {\mathbf{k} + \mathbf{k}'} \right) = {\mathbf{B}_\mathbf{k}} \cdot {\mathbf{B}_{\mathbf{k}'}}\left( {\mathbf{k} + \mathbf{k}'} \right)$, we infer that
\begin{equation*}
R_{q - 1}^{osc} = \mathcal{R}\sum\limits_{\left( {\mathbf{k},l} \right),\left( {\mathbf{k}',l'} \right),\mathbf{k} + \mathbf{k}' \ne 0} {\left( {{\mathbf{B}_\mathbf{k}} \otimes {\mathbf{B}_{\mathbf{k}'}} - \frac{1}{2}{\mathbf{B}_\mathbf{k}} \cdot {\mathbf{B}_{\mathbf{k}'}}I} \right) \left( {\nabla   {f_{\mathbf{k}l\mathbf{k}'l'}}} \right)  \exp \left( {\mathtt{i}{\lambda _q}\left( {\mathbf{k} + \mathbf{k}'} \right) \cdot x} \right)}   .  
\end{equation*}
\par
Using Lemma \ref{STN-HCS-A-sjdij} and the stationary phase lemma, we deduce that
\begin{equation}
{\left\| {R_{q - 1}^{osc}} \right\|_{{\mathcal{C}^N}}} \lesssim \lambda _q^{ - 1}\mathop {\sup }\limits_{\left( {\mathbf{k},l} \right),\left( {\mathbf{k}',l'} \right)} \left( {{{\left\| {{f_{\mathbf{k}l\mathbf{k}'l'}} } \right\|}_{{C^{N + 1}}}} + \lambda _q^N{{\left\| {{f_{\mathbf{k}l\mathbf{k}'l'}}} \right\|}_{{\mathcal{C}^1}}}} \right) . \label{ddmcjffa}
\end{equation}
It follows from \eqref{SJKFHASJJFN} and \eqref{AIUJBHTGsdfghS-AB-2} that
\begin{align*}
{\left\| {R_{q - 1}^{osc}} \right\|_{{\mathcal{C}^0}}} + \lambda _q^{ - 1}{\left\| {R_{q - 1}^{osc}} \right\|_{{\mathcal{C}^1}}} + \lambda _q^{ - 2}{\left\| {R_{q - 1}^{osc}} \right\|_{{\mathcal{C}^2}}} \lesssim & {\delta _q}\lambda _q^{ - 3}\left( {{\lambda _{q - 1}}\lambda _q^2 + {\lambda _{q - 1}}\kappa _{q - 1}^{ - 1}{\lambda _q} + {\lambda _{q - 1}}\kappa _{q - 1}^{ - 2}} \right) \\
\lesssim & {\delta _q}\left( {\lambda _q^{ - \varpi } + \lambda _q^{ - 2\varpi } + \lambda _q^{ - 3\varpi }} \right).
\end{align*}
Consequently, there exists a real number $\alpha  \in \left( {0,1} \right)$ such that
\begin{equation*}
{\left\| {R_{q - 1}^{osc}} \right\|_{{\mathcal{C}^0}}} + \lambda _q^{ - 1}{\left\| {R_{q - 1}^{osc}} \right\|_{{\mathcal{C}^1}}} + \lambda _q^{ - 2}{\left\| {R_{q - 1}^{osc}} \right\|_{{\mathcal{C}^2}}} \lesssim {\delta _{q + 1}}\lambda _q^{ - \alpha }  .  
\end{equation*}
\par
Applying the material derivative operator to $R_{q - 1}^{osc}$, we have
\begin{equation*}
{\mathfrak{D}_{t,q}}R_{q - 1}^{osc} = \mathcal{R}{\text{div}}{\mathfrak{D}_{t,q - 1}}F + \left[ {{{\bar u}_{q - 1}} \cdot \nabla ,\mathcal{R}{\text{div}}} \right]F + \left( {{{\bar u}_q} - {{\bar u}_{q - 1}}} \right) \cdot \nabla R_{q - 1}^{osc} ,
\end{equation*}
where
\begin{equation*}
F \triangleq {w_{o,q - 1}} \otimes {w_{o,q - 1}} - \sum\limits_{l \in \left[ {0,\left\lceil {{\mu _q}T} \right\rceil } \right]} {\chi _l^2\left( {{\rho _{q - 1}}I - {{\bar R}_l}} \right)}  - \frac{1}{2}{\left| {{w_{o,q - 1}}} \right|^2}I  .
\end{equation*}
Notice that Lemma \ref{TSP-E-L-A-1} implies
\begin{equation}
{\left\| {{{\bar R}_l}} \right\|_{{\mathcal{C}^0}}} \lesssim  {\left\| {\bar R_{q - 1}} \right\|_{{\mathcal{C}^0}}} , \ \ \ \ {\left\| {{{\bar R}_l}} \right\|_{{\mathcal{C}^1}}} \lesssim \left( {1 + \mu _q^{ - 1}{{\left\| {{{\bar u}_{q - 1}}} \right\|}_{{\mathcal{C}^1}}}} \right)  {\left\| {\bar R_{q - 1}} \right\|_{{\mathcal{C}^1}}} .  \label{SJFHJKSA-DAFGH}
\end{equation}
Then, it follows from \eqref{IJSUHD-A-4}, \eqref{E-OS-SBE-1}, \eqref{E-OS-SBE-3} and \eqref{SJFHJKSA-DAFGH} that
\begin{equation*}
{\left\| F \right\|_{{\mathcal{C}^0}}} + \lambda _q^{ - 1}{\left\| F \right\|_{{\mathcal{C}^1}}} + \mu _q^{ - 1}{\left\| {{\mathfrak{D}_{t,q - 1}}F} \right\|_{{\mathcal{C}^0}}} + \lambda _q^{ - 1}\mu _q^{ - 1}{\left\| {{\mathfrak{D}_{t,q - 1}}F} \right\|_{{\mathcal{C}^1}}} \lesssim {\delta _q}
\end{equation*}
Employing Schauder estimate (see \cite[Appendix F]{MR3374958}) and Lemma \ref{TSsdfP-sE-L-A-1}, we deduce that
\begin{equation*}
{\left\| {\mathcal{R}{\text{div}}{\mathfrak{D}_{t,q - 1}}F} \right\|_{{\mathcal{C}^{\alpha}}}} \lesssim \left\| {{\mathfrak{D}_{t,q - 1}}F} \right\|_{{\mathcal{C}^0}}^{1 - \alpha }\left\| {{\mathfrak{D}_{t,q - 1}}F} \right\|_{{\mathcal{C}^1}}^\alpha  \lesssim {\mu _q}{\delta _q}\lambda _q^\alpha  \lesssim \delta _q^{\frac{1}{2}}{\delta _{q + 1}}\lambda _q^{1 - \alpha } ,
\end{equation*}
and
\begin{equation*}
{\left\| {\left[ {{{\bar u}_{q - 1}} \cdot \nabla ,\mathcal{R}{\text{div}}} \right]F} \right\|_{{\mathcal{C}^{\alpha}}}} \lesssim {\left\| {{{\bar u}_{q - 1}}} \right\|_{{\mathcal{C}^{1 + \alpha}}}}\left\| F \right\|_{{\mathcal{C}^0}}^{1 - \alpha }\left\| F \right\|_{{\mathcal{C}^1}}^\alpha  \lesssim \delta _{q - 1}^{\frac{1}{2}}{\delta _q}{\lambda _{q - 1}^{1 + \alpha}}\lambda _q^\alpha  \lesssim \delta _q^{\frac{1}{2}}{\delta _{q + 1}}\lambda _q^{1 - \alpha } .
\end{equation*}
By the previous estimates \eqref{JUIDHI-AS-A} and \eqref{ddmcjffa}, we have
\begin{align*}
& {\left\| {\left( {{{\bar u}_q} - {{\bar u}_{q - 1}}} \right) \cdot \nabla R_{q - 1}^{osc}} \right\|_{{\mathcal{C}^0}}} \\
\lesssim & {\left\| {\left( {{u_{q - 1}} - {{\bar u}_{q - 1}}} \right) \cdot \nabla R_{q - 1}^{osc}} \right\|_{{\mathcal{C}^0}}} + {\left\| {\left( {{u_{q}} - {{\bar u}_{q}}} \right) \cdot \nabla R_{q - 1}^{osc}} \right\|_{{\mathcal{C}^0}}} + {\left\| {\left( {{w_{q - 1}} \cdot \nabla } \right)R_{q - 1}^{osc}} \right\|_{{\mathcal{C}^0}}} \\
\lesssim & \left( {{{\left\| {{u_{q - 1}} - {{\bar u}_{q - 1}}} \right\|}_{{\mathcal{C}^0}}} + {{\left\| {{u_{q}} - {{\bar u}_{q}}} \right\|}_{{\mathcal{C}^0}}} + {\left\| {{w_{q - 1}} + \mathcal{W}_q^{loc}} \right\|_{{\mathcal{C}^0}}}} \right){\left\| {R_{q - 1}^{osc}} \right\|_{{\mathcal{C}^1}}} \\
\lesssim & \left( {{\delta _q}\lambda _q^{1 - \varpi } + {\delta _q}{\lambda _{q - 1}}} \right)\left( {\delta _{q - 1}^{\frac{1}{2}} + {\kappa _{q}} \delta _{q}^{\frac{1}{2}} {\lambda _{q }} + {\kappa _{q - 1}}\delta _{q - 1}^{\frac{1}{2}}{\lambda _{q - 1}}} \right) .
\end{align*}
Therefore,
\begin{equation*}
{\left\| { {{\mathfrak{I} _{t,l,q}}} R_{q - 1}^{osc}} \right\|_{{\mathcal{C}^0}}}  \lesssim \mu _{q + 1}^{ - 1}  \delta _q^{\frac{1}{2}}{\delta _{q + 1}}\lambda _q^{1 - \alpha }  .
\end{equation*}
\par
\vspace{1em}
{\textbf{Step 3. Estimates of the remaining term $R_{q - 1}^{rem}$}}. 
\par
Invoking \eqref{JUIDHI-AS-A}, \eqref{E-OS-SBE-1}, \eqref{E-OS-SBE-2} and Lemma \ref{STN-HCS-A-sjdij}, we derive that
\begin{align*}
{\left\| {R_{q - 1}^{rem}} \right\|_{{\mathcal{C}^0}}} \lesssim & {\left\| {u_{q - 1} - \bar u_{q - 1}} \right\|_{{\mathcal{C}^0}}}{\left\| {{w_{q - 1}}} \right\|_{{\mathcal{C}^0}}} + \lambda _q^{ - 1}{\left\| {{w_{c,q - 1}}} \right\|_{{\mathcal{C}^0}}}{\left\| {{w_{o,q - 1}}} \right\|_{{\mathcal{C}^1}}} + {\left\| {{w_{c,q - 1}}} \right\|_{{\mathcal{C}^0}}}{\left\| {{w_{c,q - 1}}} \right\|_{{\mathcal{C}^0}}} \\
\lesssim & {\kappa _{q - 1}}\delta _{q - 1}^{\frac{1}{2}}\delta _q^{\frac{1}{2}}{\lambda _{q - 1}} + {\mu _q ^{ - 1}}\delta _{q - 1}^{\frac{1}{2}}{\delta _q}{\lambda _{q - 1}} + {\mu_q ^{ - 2}}{\delta _{q - 1}}{\delta _q}\lambda _{q - 1}^2 , 
\end{align*}
and
\begin{align*}
{\left\| {R_{q - 1}^{rem}} \right\|_{{\mathcal{C}^N}}} \lesssim & {\left\| {u_{q - 1} - \bar u_{q - 1}} \right\|_{{\mathcal{C}^0}}}{\left\| {{w_{q - 1}}} \right\|_{{\mathcal{C}^N}}} + {\left\| {u_{q - 1} - \bar u_{q - 1}} \right\|_{{\mathcal{C}^N}}}{\left\| {{w_{q - 1}}} \right\|_{{\mathcal{C}^0}}} \\
& + {\left\| {{w_{c,q - 1}}} \right\|_{{\mathcal{C}^0}}}\left( {{{\left\| {{w_{o,q - 1}}} \right\|}_{{\mathcal{C}^N}}} + {{\left\| {{w_{c,q - 1}}} \right\|}_{{\mathcal{C}^N}}}} \right) + \lambda _q^{ - 1}{\left\| {{w_{c,q - 1}}} \right\|_{{\mathcal{C}^0}}}{\left\| {{w_{o,q - 1}}} \right\|_{{C^{N + 1}}}} \\
& + {\left\| {{w_{c,q - 1}}} \right\|_{{\mathcal{C}^N}}}\left( {{{\left\| {{w_{o,q - 1}}} \right\|}_{{\mathcal{C}^N}}} + {{\left\| {{w_{c,q - 1}}} \right\|}_{{\mathcal{C}^1}}}} \right) + \lambda _q^{ - 1}{\left\| {{w_{c,q - 1}}} \right\|_{{\mathcal{C}^N}}}{\left\| {{w_{o,q - 1}}} \right\|_{{\mathcal{C}^N}}}  \\
\lesssim & {\kappa _{q - 1}}\delta _{q - 1}^{\frac{1}{2}}\delta _q^{\frac{1}{2}}{\lambda _{q - 1}}{\lambda _q^N} + {\mu _q ^{ - 1}}\delta _{q - 1}^{\frac{1}{2}}{\delta _q}{\lambda _{q - 1}}{\lambda _q^N}  .
\end{align*}
Then
\begin{equation}
{\left\| {R_{q - 1}^{rem}} \right\|_{{\mathcal{C}^0}}} + \lambda _q^{ - 1}{\left\| {R_{q - 1}^{rem}} \right\|_{{\mathcal{C}^1}}}  + \lambda _q^{ - 2}{\left\| {R_{q - 1}^{rem}} \right\|_{{\mathcal{C}^2}}} \lesssim {\delta _{q + 1}}\lambda _q^{ - \alpha } . \label{AIUJBHTGS-AB-17}  
\end{equation}
\par
Considering the It\^{o} integral $\int_{{t_l}}^t { {{w_{q - 1}} \cdot \mathrm{d}\left( {{\mathcal{W}_{q - 1}} - {\mathcal{\bar W}_{q - 1}}} \right)}} $, it follows from \eqref{OS-SBE-C-2}, \eqref{BCBSJFHIG-SFG} and \eqref{E-OS-SBE-1}-\eqref{E-OS-SBE-4} that
\begin{align*}
& {\mu _{q + 1}} {\left\| {\int_{{t_l}}^t { {{w_{q - 1}} \cdot \mathrm{d}\left( {{\mathcal{W}_{q - 1}} - {\mathcal{\bar W}_{q - 1}}} \right)}} } \right\|_{{\mathcal{C}^0}}} \\
\lesssim & {\mu _{q + 1}} {\left| {t - {t_l}} \right|^{\frac{1}{2} - \alpha }}{\kappa _{q - 1}}{\left\| {{w_{q - 1}}} \right\|_{{\mathcal{C}^0}}} \sum\limits_{j \leqslant q - 1} {\lambda _j^{\frac{1}{3}  }}  {\left[ {{\mathfrak{B}_j}} \right]_{\frac{1}{2} - \alpha }}  \\
& +  {\kappa _{q - 1}}\left( {{{\left\| {{\mathfrak{D}_{t,q - 1}}{w_{q - 1}}} \right\|}_{{\mathcal{C}^0}}} + {{\left\| {{w_{q - 1}}} \right\|}_{{\mathcal{C}^1}}}} \right)\sum\limits_{j \leqslant q - 1} {\lambda _j^{\frac{1}{3} }\mathop {\sup }\limits_{\tau  \in \left[ {s \wedge \mathfrak{t},t \wedge \mathfrak{t}} \right]} \left| {{\mathfrak{B}_j}\left( \tau  \right)} \right|} \\
\lesssim & C\left( \omega  \right)  \mu _{q + 1}^{\frac{1}{2} + \alpha} {\kappa _{q - 1}}r_q^{ - 1}\delta _q^{\frac{1}{2}}\lambda _{q - 1}^{\frac{1}{3} } + {\kappa _{q - 1}} \delta _q  {\delta _{q + 1}}\lambda _{q - 1}^{\frac{1}{3}  }\lambda _q^{\frac{2}{3} - 2 \alpha }\left( {{\mu _q} + {\lambda _q}} \right) \\
\lesssim & \left( {C\left( \omega  \right) + 1} \right) \delta _q^{\frac{1}{2}}{\delta _{q + 1}}\lambda _q^{1 - \alpha } .
\end{align*}
Notice that
\begin{align*}
{\mu _{q + 1}}{\left\| { {{\mathfrak{I} _{t,l,q - 1}}} R_{q - 1}^{rem}} \right\|_{{\mathcal{C}^0}}} \lesssim &  {\left\| {{\partial _t}\left( {{v_{q - 1}} - {{\bar v}_{q - 1}}} \right)} \right\|_{{\mathcal{C}^0}}}{\left\| {{w_{q - 1}}} \right\|_{{\mathcal{C}^0}}} + {\left\| {\int_{{t_l}}^t  {{w_{q - 1}} \cdot \mathrm{d}\left( {{\mathcal{W}_{q - 1}} - {\mathcal{\bar W}_{q - 1}}} \right)} } \right\|_{{\mathcal{C}^0}}}  \\
& +  {\left\| {u_{q - 1} - \bar u_{q - 1}} \right\|_{{\mathcal{C}^0}}}\left( {{{\left\| {{\mathfrak{D}_{t,q - 1}}{w_{q - 1}}} \right\|}_{{\mathcal{C}^0}}} + {{\left\| {\bar u_{q} - \bar u_{q - 1}} \right\|}_{{\mathcal{C}^0}}}{{\left\| {{w_{q - 1}}} \right\|}_{{\mathcal{C}^1}}}} \right) \\
& +  {\left\| {{\mathfrak{D}_{t,q - 1}}{w_{c,q - 1}}} \right\|_{{\mathcal{C}^0}}}{\left\| {{w_{q - 1}}} \right\|_{{\mathcal{C}^0}}} +  {\left\| {{\mathfrak{D}_{t,q - 1}}{w_{o,q - 1}}} \right\|_{{\mathcal{C}^0}}}{\left\| {{w_{c,q - 1}}} \right\|_{{\mathcal{C}^0}}} \\
& + {\left\| {{{\bar u}_q} - {{\bar u}_{q - 1}}} \right\|_{{\mathcal{C}^0}}}{\left\| {{w_{q - 1}}} \right\|_{{\mathcal{C}^1}}}{\left\| {{w_{c,q - 1}}} \right\|_{{\mathcal{C}^0}}}  +  {\left\| {{w_{q - 1}}} \right\|_{{\mathcal{C}^0}}}{\left\| {{{\bar u}_q}} \right\|_{{\mathcal{C}^0}}}{\left\| {{u_{q - 1}} - {{\bar u}_{q - 1}}} \right\|_{{\mathcal{C}^1}}} .
\end{align*}
Therefore, employing \eqref{JUIDHI-AS-A}, \eqref{IJUHCJD-D-D-A-1-JU-2} and \eqref{E-OS-SBE-1}-\eqref{E-OS-SBE-4}, we infer that
\begin{equation*}
{\left\| {{\mathfrak{I}_{s,t,q}}R_{q - 1}^{rem}} \right\|_{{\mathcal{C}^0}}} \lesssim \left( {C\left( \omega  \right) + 1} \right) \mu _{q + 1}^{ - 1} \delta _{q + 1}^{\frac{1}{2}}{\delta _{q + 1}}\lambda _q^{1 - \alpha } .
\end{equation*}
\par
\vspace{1em}
{\textbf{Step 4. Estimates of the stochastic error $R_{q - 1}^{sto}$}}. 
\par
Using two identities
\begin{equation*}
{\text{div}}\left( {{w_{q - 1}} \otimes \mathcal{W}_q^{loc}} \right) = \sum\limits_{\left( {{\mathbf{k}},l} \right)} {{\chi _l}\left( {L_{{\mathbf{k}}l}  \cdot \nabla \mathcal{W}_q^{loc}} \right){\phi _{{\mathbf{k}}l}}\exp \left( {{\text{i}}{\lambda _q}{\mathbf{k}} \cdot x} \right)} ,
\end{equation*}
and
\begin{equation*}
{\text{div}}\left( {\mathcal{W}_q^{loc} \otimes {w_{q - 1}}} \right) =  {b_q}{\mathfrak{B}_q}\left( t \right)\lambda _q^{ - \frac{2}{3} + \alpha }\left( {\cos \left( {{\lambda _q}{x_3}} \right),\sin \left( {{\lambda _q}{x_3}} \right),0} \right) \cdot \nabla {w_{q - 1}} ,
\end{equation*}
it follows from \eqref{BCBSJFHIG-SFG}, \eqref{IJUHCJD-D-D-A-2-dfjgtyu-3e}, \eqref{IJUHCJD-D-D-A-4} and \eqref{IJSUHD-A-B-5-proof-a-1} that
\begin{align*}
& {\left\| {\mathcal{R}{\text{div}}\left( {{w_{q - 1}} \otimes \mathcal{W}_q^{loc}} \right)} \right\|_{{\mathcal{C}^0}}} + \lambda _q^{ - 1}{\left\| {\mathcal{R}{\text{div}}\left( {{w_{q - 1}} \otimes \mathcal{W}_q^{loc}} \right)} \right\|_{{\mathcal{C}^1}}}  \\
& + {\left\| {\mathcal{R}{\text{div}}\left( {\mathcal{W}_q^{loc} \otimes {w_{q - 1}}} \right)} \right\|_{{\mathcal{C}^0}}} + \lambda _q^{ - 1}{\left\| {\mathcal{R}{\text{div}}\left( {\mathcal{W}_q^{loc} \otimes {w_{q - 1}}} \right)} \right\|_{{\mathcal{C}^1}}} \\
\lesssim & \lambda _q^{ - 1}{\left\| {L_{{\mathbf{k}}l} {\phi _{{\mathbf{k}}l}}} \right\|_{{\mathcal{C}^0}}}{\left\| {\mathcal{W}_q^{loc}} \right\|_{{\mathcal{C}^1}}} + \lambda _q^{ - 2}{\left\| {{\phi _{{\mathbf{k}}l}}L_{{\mathbf{k}}l}  \cdot \nabla \mathcal{W}_q^{loc}} \right\|_{{\mathcal{C}^1}}} \\
& + \lambda _q^{ - 1}\left( {\lambda _q^{ - \frac{2}{3} + \alpha }{{\left\| {{w_{q - 1}}} \right\|}_{{\mathcal{C}^1}}}} \right) + \lambda _q^{ - 2}\left( {\lambda _q^{\frac{1}{3} + \alpha }{{\left\| {{w_{q - 1}}} \right\|}_{{\mathcal{C}^1}}} + \lambda _q^{ - \frac{2}{3} + \alpha }{{\left\| {{w_{q - 1}}} \right\|}_{{\mathcal{C}^2}}}} \right) \\
\lesssim & {\delta _{q + 1}}\lambda _q^{ - \alpha } .
\end{align*}
Moreover, we derive from \eqref{IJSUHD-A-1-KDJI} and \eqref{IJSUHD-A-3} that 
\begin{align*}
& {\left\| {\mathcal{R}{\text{div}}\left( {{v_{q - 1}} \otimes \mathcal{W}_q^{loc} + \mathcal{W}_q^{loc} \otimes {v_{q - 1}}} \right)} \right\|_{{\mathcal{C}^0}}} + \lambda _q^{ - 1}{\left\| {\mathcal{R}{\text{div}}\left( {{v_{q - 1}} \otimes \mathcal{W}_q^{loc} + \mathcal{W}_q^{loc} \otimes {v_{q - 1}}} \right)} \right\|_{{\mathcal{C}^1}}}  \\
\lesssim & {\left\| {{v_{q - 1}}} \right\|_{{\mathcal{C}^0}}}{\left\| {\mathcal{W}_q^{loc}} \right\|_{{\mathcal{C}^0}}} + \lambda _q^{ - 1}\left( {{{\left\| {{v_{q - 1}}} \right\|}_{{\mathcal{C}^1}}}{{\left\| {\mathcal{W}_q^{loc}} \right\|}_{{\mathcal{C}^0}}} + {{\left\| {{v_{q - 1}}} \right\|}_{{\mathcal{C}^0}}}{{\left\| {\mathcal{W}_q^{loc}} \right\|}_{{\mathcal{C}^1}}}} \right) \\
& + \lambda _q^{ - 2}\left( {{{\left\| {{v_{q - 1}}} \right\|}_{{\mathcal{C}^1}}}{{\left\| {\mathcal{W}_q^{loc}} \right\|}_{{\mathcal{C}^1}}} + {{\left\| {{v_{q - 1}}} \right\|}_{{\mathcal{C}^0}}}{{\left\| {\mathcal{W}_q^{loc}} \right\|}_{{\mathcal{C}^2}}} + {{\left\| {{v_{q - 1}}} \right\|}_{{\mathcal{C}^2}}}{{\left\| {\mathcal{W}_q^{loc}} \right\|}_{{\mathcal{C}^0}}}} \right) \\
\lesssim & {\delta _{q + 1}}\lambda _q^{ - \alpha } .
\end{align*}
Similarly, we also have
\begin{align*}
{\left\| {\mathcal{R}{\text{div}}\tilde F } \right\|_{{\mathcal{C}^2}}}\lesssim  {\delta _{q + 1}}\lambda _q^{2 - \alpha } ,
\end{align*}
where $\tilde F \triangleq {w_{q - 1}} \otimes \mathcal{W}_q^{loc} + \mathcal{W}_q^{loc} \otimes {w_{q - 1}} + {v_{q - 1}} \otimes \mathcal{W}_q^{loc} + \mathcal{W}_q^{loc} \otimes {v_{q - 1}}$. Therefore,
\begin{equation*}
{\left\| { R_{q - 1}^{sto} } \right\|_{{\mathcal{C}^0}}} + \lambda _q^{ - 1}{\left\| { R_{q - 1}^{sto} } \right\|_{{\mathcal{C}^1}}}  + \lambda _q^{ - 2}{\left\| { R_{q - 1}^{sto} } \right\|_{{\mathcal{C}^2}}} \lesssim {\delta _{q + 1}}\lambda _q^{ - \alpha } .  
\end{equation*}
\par
Applying the material integration operator to $R_{q - 1}^{sto}$, we have
\begin{align*}
{{\mathfrak{I} _{t,l,q}}}\mathcal{R}{\text{div}}\tilde F = & \mathcal{R}{\text{div}}  {{\mathfrak{I} _{t,l,q}}}  \tilde F + \int_{{t_l}}^t {\left( {{{\bar u}_q} - {{\bar u}_{q - 1}}} \right) \cdot \nabla \mathcal{R}{\text{div}}\tilde F} \, \mathrm{d}\tau \\
& + \int_{{t_l}}^t {\left[ {{{\bar u}_{q - 1}} \cdot \nabla ,\mathcal{R}{\text{div}}} \right]\tilde F} \, \mathrm{d}\tau  .
\end{align*}
It follows from \eqref{BCBSJFHIG-SFG}, \eqref{IJSUHD-A-1-KDJI}, \eqref{P-Ie-AAA-F-2}, \eqref{E-OS-SBE-1} and \eqref{E-OS-SBE-2} that
\begin{equation*}
{\left\| {\tilde F} \right\|_{{\mathcal{C}^0}}} + \lambda _q^{ - 1}{\left\| {\tilde F} \right\|_{{\mathcal{C}^1}}} + \lambda _q^{ - 2}{\left\| {\tilde F} \right\|_{{\mathcal{C}^2}}} \lesssim \delta _q^{\frac{1}{2}}{\delta _{q + 1}}\lambda _q^{ - 2\alpha }  .
\end{equation*}
Owing to ${\mathfrak{I} _{t,l,q}}\tilde F = \int_{{t_l}}^t {{\text{d}}\tilde F}  + \int_{{t_l}}^t {{{\bar u}_{q - 1}} \cdot \nabla \tilde F} {\text{d}}\tau $, we deduce that
\begin{align}
{\mu _{q + 1}} {\left\| {\mathcal{R}{\text{div}}\int_{{t_l}}^t {{\text{d}}\tilde F} } \right\|_{{\mathcal{C}^\alpha }}} \lesssim & {\left\| {{\partial _t}\left( {{v_{q - 1}} + {w_{q - 1}}} \right) \cdot W_q^{loc}} \right\|_{{\mathcal{C}^\alpha }}} + \mu _q^{ - 1}{\left\| {{{\bar u}_{q - 1}}} \right\|_{{\mathcal{C}^0}}}\left\| {\tilde F} \right\|_{{\mathcal{C}^1}}^{1 - \alpha }\left\| {\tilde F} \right\|_{{\mathcal{C}^2}}^\alpha  \nonumber \\
& + \left\| {{{\bar u}_{q - 1}}} \right\|_{{\mathcal{C}^0}}^{1 - \alpha }\left\| {{{\bar u}_{q - 1}}} \right\|_{{\mathcal{C}^1}}^\alpha {\left\| {\tilde F} \right\|_{{\mathcal{C}^1}}} + \mu _{q + 1}^{\frac{1}{2} + \alpha}\lambda _q^{ - \frac{2}{3}}{\left\| {{v_{q - 1}} + {w_{q - 1}}} \right\|_{{\mathcal{C}^\alpha }}}{\left[ {{\mathfrak{B}_q}} \right]_{\frac{1}{2} - \alpha }} \nonumber \\
\lesssim & \delta _q^{\frac{1}{2}}{\delta _{q + 1}}\lambda _q^{ - \alpha }\left( {\delta _{q - 1}^{\frac{1}{2}}{\lambda _{q - 1}} + {\lambda _q}} \right) + C\left( \omega  \right) \mu _{q + 1}^{\frac{1}{2} + \alpha}  r_q^{ - 1}\lambda _q^{ - \frac{2}{3}}\delta _{q - 1}^{\frac{1}{2}\alpha }\lambda _{q - 1}^\alpha  \nonumber  \\
\lesssim & \left( {C\left( \omega  \right) + 1} \right) \delta _q^{\frac{1}{2}}{\delta _{q + 1}}\lambda _q^{1 - \alpha } .  \label{XNUWEI-ASD-1}
\end{align}
In addition, we infer that
\begin{align}
{\mu _{q + 1}}{\left\| {\int_{{t_l}}^t {\left( {{{\bar u}_q} - {{\bar u}_{q - 1}}} \right) \cdot \nabla \mathcal{R}{\text{div}}\tilde F} \, {\text{d}}\tau } \right\|_{{\mathcal{C}^0}}} \lesssim & {\left\| {{{\bar u}_q} - {{\bar u}_{q - 1}}} \right\|_{{\mathcal{C}^0}}}{\left\| {\tilde F} \right\|_{{\mathcal{C}^{1 + \alpha }}}} \nonumber \\
\lesssim &  {\left\| {{{\bar u}_q} - {{\bar u}_{q - 1}}} \right\|_{{\mathcal{C}^0}}}\left\| {\tilde F} \right\|_{{\mathcal{C}^1}}^{1 - \alpha }\left\| {\tilde F} \right\|_{{\mathcal{C}^2}}^\alpha   \nonumber \\
\lesssim &  \delta _q^{\frac{1}{2}}{\delta _{q + 1}}\lambda _q^{ 1 - \alpha }  .  \label{XNUWEI-ASD-2}
\end{align}
For the commutator, we derive that
\begin{align}
{\mu _{q + 1}} {\left\| {\int_{{t_l}}^t {\left[ {{{\bar u}_{q - 1}} \cdot \nabla ,\mathcal{R}{\text{div}}} \right]\tilde F} \, {\text{d}}\tau } \right\|_{{\mathcal{C}^\alpha }}} \lesssim &  {\left\| {{{\bar u}_{q - 1}}} \right\|_{{\mathcal{C}^{1 + \alpha }}}}\left\| {\tilde F} \right\|_{{\mathcal{C}^0}}^{1 - \alpha }\left\| {\tilde F} \right\|_{{\mathcal{C}^1}}^\alpha  \nonumber \\
\lesssim &  \delta _q^{\frac{1}{2}}{\delta _{q + 1}}\lambda _q^{1 - \alpha }  . \label{XNUWEI-ASD-3}
\end{align}
Therefore, combining \eqref{XNUWEI-ASD-1}, \eqref{XNUWEI-ASD-2} and \eqref{XNUWEI-ASD-3}, we obtain
\begin{equation*}
{\left\| { {\mathfrak{I}_{t,l,q}} R_{q - 1}^{sto} } \right\|_{{\mathcal{C}^0}}} \lesssim \left( {C\left( \omega  \right) + 1} \right) \mu _{q + 1}^{ - 1} \delta _q^{\frac{1}{2}}{\delta _{q + 1}}\lambda _q^{1 - \alpha }   .
\end{equation*}
\par
\vspace{1em}
{\textbf{Step 5. Estimates of the compensative corrector $R_{q - 1}^{com}$}}. 
\par
Let ${{X}_l}$ be the Lagrangian flow satisfying
\begin{equation}\label{KDKJFIG-DNGJU-1-1}
\left\{ {\begin{array}{*{20}{l}}
\mathrm{d} {{ X}_l}\left( {t,x} \right) = {{\bar u}_{q - 1}}\left( {t, {{ X}_l}\left( {t,x} \right)} \right) \mathrm{d} t,\\
{{X}_l}\left( {t_l ,x} \right) = x .
\end{array}} \right.
\end{equation}
By It\^{o}--Wentzell formula, we deduce that
\begin{align*}
\mathrm{d}\left( {{{\bar R}_{q - 1}}\left( {t,{X_l}} \right) - {{\bar R}_l}\left( {t,{X_l}} \right)} \right) = & {\mathrm{d}_t}\left( {{{\bar R}_{q - 1}}\left( {t,{X_l}} \right) - {{\bar R}_l}\left( {t,{X_l}} \right)} \right)  \\
& + {{\bar u}_{q - 1}}\left( {t,{X_l}} \right) \cdot \nabla \left( {{{\bar R}_{q - 1}}\left( {t,{X_l}} \right) - {{\bar R}_l}\left( {t,{X_l}} \right)} \right)\mathrm{d}t .
\end{align*}
Recall that ${\mathrm{d}_t}{{\bar R}_l}\left( {t,{X_l}} \right) + {{\bar u}_{q - 1}}\left( {t,{X_l}} \right) \cdot \nabla {{\bar R}_l}\left( {t,{X_l}} \right)\mathrm{d}t = 0$. We then get
\begin{equation*}
{{\bar R}_{q - 1}}\left( {t,{X_l}} \right) - {{\bar R}_l}\left( {t,{X_l}} \right) = {\mathfrak{I} _{t,l,q - 1}}{{\bar R}_{q - 1}}\left( {t,{X_l}} \right), 
\end{equation*}
where $\mathfrak{I}_{t,l,{q - 1}} \left(  \cdot  \right) \triangleq \int_{t_l}^{t } {\mathrm{d}\left(  \cdot  \right)}  + \int_{t_l}^{t } {{\bar u_{q - 1}} \cdot \nabla \left(  \cdot  \right)\mathrm{d}t}  $, $t \in {{\rm T}_l}$, is the localized material integration corresponding to ${\bar u_{q - 1}}$.
\par
By virtue of the homeomorphic property, we obtain
\begin{equation*}
{\left\| {\bar R_{q - 1} - {{\bar R}_l}} \right\|_{{\mathcal{C}^0}}} = \left\| {{{\bar R}_{q - 1}}\left( {t,{X_l}} \right) - {{\bar R}_l}\left( {t,{X_l}} \right)} \right\|_{{\mathcal{C}^0}} \lesssim {\left\| {{\mathfrak{I}_{t,l,q - 1}}\bar R_{q - 1} \left( {\tau,{X_l}} \right) } \right\|_{{\mathcal{C}^0}}} .
\end{equation*}
Therefore, using the inductive estimate \eqref{IJSUHD-A-5}, the fact ${\left\| {\mathfrak{D}{X_l}} \right\|_{{\mathcal{C}^N}}} \lesssim \lambda _{q - 1}^N$ and Lemma \ref{INSKDINEQU-afski-a-1}, we infer that
\begin{align*}
{\left\| {{\mathfrak{I} _{t,l,q - 1}}{{\bar R}_{q - 1}} \left( {\tau,{X_l}} \right)} \right\|_{{\mathcal{C}^0}}} \lesssim  C\left( \omega  \right) \mu _q^{ - 1} \delta _{q - 1}^{\frac{1}{2}}{\delta _q}\lambda _{q - 1}^{1 - \alpha } ,
\end{align*}
which implies that
\begin{equation*}
{\left\| {\bar R_{q - 1} - {{\bar R}_l}} \right\|_{{\mathcal{C}^0}}} \lesssim C\left( \omega  \right) \mu _q^{ - 1} \delta _{q - 1}^{\frac{1}{2}}{\delta _q}\lambda _{q - 1}^{1 - \alpha } .
\end{equation*}
In addition, it follows from \eqref{IJSUHD-A-4} and \eqref{IJSUHD-A-5} that
\begin{equation*}
{\left\| {\mathring{R}_{q - 1} - \bar R_{q - 1}} \right\|_{{\mathcal{C}^N}}} \lesssim {\kappa _{q - 1}}{\left\| {\mathring{R}_{q - 1}} \right\|_{{\mathcal{C}^{N + 1}}}} \lesssim {\kappa _{q - 1}}{\delta _q}\lambda _{q - 1}^{N + 1 - \alpha } .
\end{equation*}
Since ${\kappa _{q - 1}}{\delta _q}\lambda _{q - 1}^{1 - \alpha } + \mu _q^{ - 1}\delta _{q - 1}^{\frac{1}{2}}{\delta _q}\lambda _{q - 1}^{1 - \alpha } \lesssim {\delta _{q + 1}}\lambda _q^{ - \alpha }$, we deduce that
\begin{equation*}
{\left\| { R_{q - 1}^{com} } \right\|_{{\mathcal{C}^0}}} + \lambda _q^{ - 1}{\left\| { R_{q - 1}^{com} } \right\|_{{\mathcal{C}^1}}}  + \lambda _q^{ - 2}{\left\| { R_{q - 1}^{com} } \right\|_{{\mathcal{C}^2}}}  \lesssim {\delta _{q + 1}}\lambda _q^{ - \alpha } . 
\end{equation*}
\par
For the material integration of ${{\mathring{R}_{q - 1}} - {{\bar R}_{q - 1}}}$, we have
\begin{align*}
{\mathfrak{I}_{t,l,q}}\left( {{\mathring{R}_{q - 1}} - {{\bar R}_{q - 1}}} \right) = & \int_{{t_l}}^t  {\mathrm{d}\left( {{\mathring{R}_{q - 1}} - {{\bar R}_{q - 1}}} \right)}  + \int_{{t_l}}^t  {{{\bar u}_{q - 1}} \cdot \nabla \left( {{\mathring{R}_{q - 1}} - {{\bar R}_{q - 1}}} \right)} \, \mathrm{d}\tau  \\
& + \int_{{t_l}}^t  {\left( {{{\bar u}_q} - {{\bar u}_{q - 1}}} \right) \cdot \nabla \left( {{\mathring{R}_{q - 1}} - {{\bar R}_{q - 1}}} \right)} \, \mathrm{d}\tau .
\end{align*}
Owing to Lemma \ref{Ie-SBE}, we rewrite the Reynolds stress ${\mathring{R}_{q - 1}}$ as a semimartingale decomposition
\begin{equation*}
{\text{d}}{\mathring{R}_{q - 1}} = {\mathcal{A}_R}{\text{d}}t + \sum\limits_{j \leqslant q - 1} {{{\tilde H}_j}} {\mkern 1mu} {\text{d}}{\mathfrak{B}_j} ,
\end{equation*}
where ${\left\| {{\mathcal{A}_R}} \right\|_{{\mathcal{C}^0}}} \lesssim {\left\| {{\mathring{R}_{q - 1}}} \right\|_{{\mathcal{C}^1}}} \lesssim {\delta _q}\lambda _{q - 1}^{1 - \alpha }$ and ${\left\| {{{\tilde H}_j}} \right\|_{{\mathcal{C}^0}}} \lesssim \delta _{j + 1}^{\frac{1}{2}}\lambda _j^{\frac{1}{3}}$. We then otain
\begin{equation*}
\int_{{t_l}}^t {{\text{d}}\left( {{\mathring{R}_{q - 1}} - {{\bar R}_{q - 1}}} \right)}  = \int_{{t_l}}^t {{\mathcal{A}_R} - {{ \mathcal{\bar A}}_R}} {\text{d}}\tau  + \int_{{t_l}}^t {\sum\limits_{jq - 1} {\left( {{{\tilde H}_j} - {{\tilde H}_j}*{\varsigma _{{\kappa _{q - 1}}}}} \right)} {\text{d}}{\mathfrak{B}_j}}  .
\end{equation*}
In the preceding analysis, we have rigorously analyzed the contribution of It\^{o} integrals to the material integration, emphasizing that the associated time-scale factors must be retained in the local temporal scale $\left| {t - {t_l}} \right| \lesssim \mu _{q + 1}^{ - 1}$. We employ the same argument as \eqref{IJSUHD-A-B-5-prosdafof-a-4} to handle the It\^{o} integral $\int_{{t_l}}^t {\sum\limits_{j \leqslant q - 1} {\left( {{{\tilde H}_j} - {{\hat H}_j}} \right)} {\text{d}}{\mathfrak{B}_j}}$. Therefore, it is easy to obtain
\begin{equation*}
{\mu _{q + 1}} {\left\| {{\mathfrak{I}_{t,l,q}}\left( {{\mathring{R}_{q - 1}} - {{\bar R}_{q - 1}}} \right)} \right\|_{{\mathcal{C}^0}}} \lesssim  \delta _q^{\frac{1}{2}}{\delta _{q + 1}}\lambda _q^{1 - \alpha } . 
\end{equation*}
Moreover, using the fact that ${\mathfrak{D}_{t,q - 1}}{{\bar R}_{l}} = 0$, we have
\begin{align*}
{\left\| {{{\mathfrak{I} _{t,l,q}}}\left( {{{\bar R}_{q - 1}} - {{\bar R}_l}} \right)} \right\|_{{\mathcal{C}^0}}} \lesssim & {\left\| { {{\mathfrak{I} _{t,l,q}}} {{\bar R}_{q - 1}}} \right\|_{{\mathcal{C}^0}}} + {\left\| {\int_{{t_l}}^t {\left( {{{\bar u}_q} - {{\bar u}_{q - 1}}} \right) \cdot \nabla {{\bar R}_l}} d\tau } \right\|_{{\mathcal{C}^0}}} \\
\lesssim & \mu _{q + 1}^{ - 1}\delta _q^{\frac{1}{2}}{\delta _{q + 1}}\lambda _q^{1 - \alpha } .
\end{align*}
Consequently,
\begin{equation*}
{\left\| { {{\mathfrak{I} _{t,l,q}}} R_{q - 1}^{com} } \right\|_{{\mathcal{C}^0}}} \lesssim \mu _{q + 1}^{ - 1}  \delta _q^{\frac{1}{2}}{\delta _{q + 1}}\lambda _q^{1 - \alpha } . 
\end{equation*}
The proof is complete. \qed
\par
\begin{proposition}\label{RSRI-ESI-GAP}
For any $t \in \left[ {0, T \wedge \mathfrak{t}} \right]$, the $q$-step energy gap $\mathfrak{E}_{q}$ defined by \eqref{SJKFJdafj-sagfg} satisfies
\begin{equation}\label{RSRI-P-mene-GAP}
0 < {\rho _{q,l}}  \sim {\delta _{q + 1}} . 
\end{equation}
\end{proposition}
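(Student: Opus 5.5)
We prove \eqref{RSRI-P-mene-GAP} by induction on $q$, following the standard energy-gap argument of \cite{MR3374958} adapted to the stochastic setting. The idea is to express the $q$-step gap $\rho_{q,l}=\mathbb{E}\mathfrak{E}_{q,l}$ in terms of the $(q-1)$-step gap $\rho_{q-1,l}\sim\delta_q$. The construction of $w_{o,q-1}$ is designed to inject exactly the energy $3|\mathbb{T}^3|\rho_{q-1,l}$ (up to errors). Hence the leading contributions cancel, and what remains is $\frac{1}{2}\mathfrak{M}\delta_{q+1}$ plus error terms that are $o(\delta_{q+1})$ in expectation.

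\textbf{Step 1: energy increment.} Write $\hat u_q=\hat u_{q-1}+\hat w_{q-1}+\hat{\mathcal{W}}^{loc}_q$ and expand $\|\hat u_q(t_l)\|_{L^2}^2-\|\hat u_{q-1}(t_l)\|_{L^2}^2$. By Lemma \ref{BFL-1} and Lemma \ref{BFL-2}, taking the trace of $\int w_{o,q-1}\otimes w_{o,q-1}\,\mathrm{d}x$ gives the leading term
\[
\sum_l\chi_l^2\,\mathrm{tr}\int_{\mathbb{T}^3}\bigl(\rho_{q-1,l}I-\bar R_l\bigr)\,\mathrm{d}x = 3|\mathbb{T}^3|\rho_{q-1,l}+O(\|\bar R_{q-1}\|_{\mathcal{C}^0}),
\]
where $\bar R_l$ is traceless up to mollification errors. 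The remaining contributions are bounded as follows:
\begin{enumerate}[(a)]
\item the non-resonant part $\mathbf{k}+\mathbf{k}'\neq0$ is bounded by $\lambda_q^{-1}\|f_{\mathbf{k}l\mathbf{k}'l'}\|_{\mathcal{C}^1}$ via the stationary phase lemma and \eqref{AIUJBHTGsdfghS-AB-2};
\item the corrector terms are bounded by \eqref{E-OS-SBE-2};
\item the cross term $2(\hat u_{q-1},\hat w_{q-1})$ is small because $w_{q-1}$ oscillates at frequency $\lambda_q$, which gives a factor $\lambda_q^{-1}\|u_{q-1}\|_{\mathcal{C}^1}\delta_q^{1/2}$;
\item the mollification errors are controlled by \eqref{SJKFHASJJFN}.
\end{enumerate}
All of these are $\lesssim\delta_{q+1}\lambda_q^{-\alpha}$ by the parameter choices already used in the proof of \eqref{IJSUHD-A-B-4}.

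\textbf{Step 2: stochastic terms.} Next compare $\|\hat{\mathcal W}_q\|^2-\|\hat{\mathcal W}_{q-1}\|^2$ with the change in the It\^o integral $2\int_0^{t_l}(\hat v_q-\hat v_{q-1},\mathrm{d}\hat{\mathcal W}_q)+2\int_0^{t_l}(\hat v_{q-1},\mathrm{d}(\hat{\mathcal W}_q-\hat{\mathcal W}_{q-1}))$.
\begin{itemize}
\item The cross terms $(\hat u_{q-1},\hat{\mathcal W}^{loc}_q)$ and $(\hat w_{q-1},\hat{\mathcal W}^{loc}_q)$ are handled through the It\^o product rule: the drift part pairs with $\partial_t v$, and the martingale parts match the added It\^o integrals. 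These martingale parts have zero expectation.
\item The quadratic variation $\|\hat{\mathcal W}^{loc}_q\|^2$ has expectation $\sim b_q^2\lambda_q^{-4/3}t_l$, which is $\ll\delta_{q+1}$ since $\lambda_q^{-4/3}=a^{-\frac43 cb^{q+1}}$ and $c>\frac52$.
\item The deterministic terms $(\hat u_{q-1},\hat{\mathcal W}^{loc}_q)$ are controlled pathwise by \eqref{BCBSJFHIG-SFG}.
\end{itemize}

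\textbf{Step 3: assembling the gap.} Taking expectations and using that the It\^o integrals are martingales (stopped at $\mathfrak t$, with bounded integrands by \eqref{IJSUHD-A-B-6}), we obtain
\[
\rho_{q,l}=\rho_{q-1,l}-\rho_{q-1,l}+\frac{\mathfrak M}{6|\mathbb T^3|}\bigl(\delta_{q+1}-\delta_{q+2}\bigr)+O\bigl(\delta_{q+1}\lambda_q^{-\alpha}\bigr)\cdot\mathbb{E}\bigl(C(\omega)+1\bigr).
\]
The $\delta_{q+1}$ coefficient is fixed by the $-\frac12\mathfrak M\delta_{q+1}$ term in \eqref{SJKFJdafj-sagfg} at level $q-1$ versus $-\frac12\mathfrak M\delta_{q+2}$ at level $q$. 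Choosing $\mathfrak M$ large and $a$ large, so that $\lambda_q^{-\alpha}\mathbb E C(\omega)\ll\mathfrak M$, yields $0<\rho_{q,l}\sim\delta_{q+1}$. The monotonicity of $\mathfrak e$ is used only to keep the expression positive uniformly in $l$.

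\textbf{Main obstacle.} The hard part is Step 2. The cross terms between $w_{q-1}$ and the new noise, and the mismatch between $\|\hat{\mathcal W}_q\|^2$ and the It\^o integrals, must have either vanishing expectation or size $o(\delta_{q+1})$. In particular, $\mathbb E\,C(\omega)<\infty$ must be used rather than the merely almost-sure bounds, since the error terms carry the random constant. I would first try to write every noise term via It\^o's formula for $\|\hat u_q\|^2$ on $[0,t_l\wedge\mathfrak t]$ and match it term by term with the corresponding expression at level $q-1$.
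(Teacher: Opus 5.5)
Your proposal follows essentially the same route as the paper: you express $3|\mathbb{T}^3|\rho_{q,l}$ through the level-$(q-1)$ gap, cancel $3|\mathbb{T}^3|\rho_{q-1,l}$ against the energy of $w_{o,q-1}$ supplied by Lemmas \ref{BFL-1}--\ref{BFL-2}, bound the non-resonant, corrector, mollification and noise cross terms, and let $\frac{1}{2}\mathfrak{M}(\delta_{q+1}-\delta_{q+2})$ dominate for $\mathfrak{M}$ large. The only real difference is that you dispose of the stopped It\^o integrals by their zero mean, where the paper bounds their second moments; one correction is that $\bar R_l$ is exactly traceless (since $\mathring{R}_{q-1}$ is, and mollification and transport by $\bar u_{q-1}$ preserve this), so the $O(\|\bar R_{q-1}\|_{\mathcal{C}^0})$ term in your Step 1 vanishes identically, which you need, because a bound of size $\delta_q\lambda_{q-1}^{-\alpha}$ is not in general $\lesssim\delta_{q+1}$.
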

\par
\noindent{\textbf{Proof}}.
It follows from Lemma \ref{BFL-1} and Lemma \ref{BFL-2} that
\begin{align*}
\int_{{\mathbb{T}^3}} {{w_{o,q - 1}}\left( x \right) \otimes {w_{o,q - 1}}\left( x \right)} \, {\mathrm{d}}x = & \left| {{\mathbb{T}^3}} \right| {\rho _{q - 1,l}} I - \sum\limits_{l \in \left[ {0,{\mu _q}} \right]} {\chi _l^2\int_{{\mathbb{T}^3}} {{{\bar R}_l}\left( x \right)} \, {\mathrm{d}}x}  \\
& + \sum\limits_{\left( {\mathbf{k},l} \right),\left( {\mathbf{k}',l'} \right),\mathbf{k} + \mathbf{k}' \ne 0} {{\chi _l}{\chi _{l'}}\int_{{\mathbb{T}^3}} {w_{{\mathbf{k}}l}\left( x \right) \otimes w_{{\mathbf{k}}'l'}^u\left( x \right)} \, {\mathrm{d}}x} . 
\end{align*}
Then
\begin{equation*}
\left\| {{w_{o,q - 1}}} \right\|_{{L^2}}^2 = 3 \left| {{\mathbb{T}^3}} \right|{\rho _{q - 1,l}} + \mathrm{Tr}\, \mathfrak{E}_{q - 1}^{Rey} ,
\end{equation*}
where
\begin{equation}
\mathfrak{E}_{q - 1}^{Rey} = \sum\limits_{\left( {\mathbf{k},l} \right),\left( {\mathbf{k}',l'} \right),\mathbf{k} + \mathbf{k}' \ne 0} {{\chi _l}{\chi _{l'}}\int_{{\mathbb{T}^3}} {w_{{\mathbf{k}}l}\left( x \right) \otimes w_{{\mathbf{k}}'l'}\left( x \right)} \, {\mathrm{d}}x}  - \sum\limits_{l \in \left[ {0,{\mu _q}} \right]} {\chi _l^2\int_{{\mathbb{T}^3}} {{{\bar R}_l}\left( x \right)} \, {\mathrm{d}}x}   .  \label{RSRI-P-mene-GAP-proof-1}
\end{equation}
Notice that
\begin{equation*}
{{\hat u}_q} = {{\hat u}_{q - 1}} + w_{q - 1} * {\varsigma _{{{\hat \kappa }_{q}}}} + \mathcal{\hat W}_q^{loc} + {u_{q - 1}} * \left( {{\varsigma _{{{\hat \kappa }_q}}} - {\varsigma _{{{\hat \kappa }_{q - 1}}}}} \right) ,
\end{equation*} 
\begin{equation*}
{{\hat v}_q} = {{\hat v}_{q - 1}} +  w_{q - 1} * {\varsigma _{{{\hat \kappa }_{q}}}} + {v_{q - 1}} * \left( {{\varsigma _{{{\hat \kappa }_q}}} - {\varsigma _{{{\hat \kappa }_{q - 1}}}}} \right)  ,
\end{equation*}
\begin{equation*}
\mathcal{\hat W}_q = \mathcal{\hat W}_{q - 1} + \mathcal{\hat W}_q^{loc} + {\mathcal{W}_{q - 1}} * \left( {{\varsigma _{{{\hat \kappa }_q}}} - {\varsigma _{{{\hat \kappa }_{q - 1}}}}} \right)  ,
\end{equation*}
where $\mathcal{\hat W}_q^{loc} = \mathcal{W}_q^{loc} * {\varsigma _{{{\hat \kappa }_{q}}}}$. We then have
\begin{align*}
3 \left| {{\mathbb{T}^3}} \right| {\rho _{q,l}}  = & {\mathfrak{e} - \mathbb{E} \left\| {{{\hat u}_q}} \right\|_{{L^2}}^2 + \mathbb{E} \left\| {{{ \mathcal{\hat W}}_q}} \right\|_{{L^2}}^2 + 2 \mathbb{E} \int_{0}^{t_l \wedge  \mathfrak{t}} {{{\left( {{{\hat v}_q},{\text{d}}{\mathcal{\hat W}_q}} \right)}_{{\mathbb{T}^3}}}} } - \frac{1}{2} \mathfrak{M} {\delta _{q + 2}} \\
\triangleq & \mathbb{E} {\mathfrak{E}_{q - 1}^{osc} + \mathbb{E} \mathfrak{E}_{q - 1}^{sto} + \mathbb{E} \mathfrak{E}_{q - 1}^{rem}} - \mathbb{E} \mathrm{Tr}\, \mathfrak{E}_{q - 1}^{Rey} ,
\end{align*}
where
\begin{align*}
\mathfrak{E}_{q - 1}^{osc} = & \left\| {{w_{o,q - 1}}} \right\|_{{L^2}}^2 - \left\| {{w_{o,q - 1}} * {\varsigma _{{{\hat \kappa }_{q}}}}} \right\|_{{L^2}}^2 - \left( {{w_{o,q - 1}} * {\varsigma _{{{\hat \kappa }_{q}}}},{w_{c,q - 1}} * {\varsigma _{{{\hat \kappa }_{q}}}}} \right)_{\mathbb{T}^3} \\
& - \left( {{w_{c,q - 1}} * {\varsigma _{{{\hat \kappa }_{q }}}}, w_{q - 1}  * {\varsigma _{{{\hat \kappa }_{q }}}}} \right)_{\mathbb{T}^3} - 2\left( { w_{q - 1}  * {\varsigma _{{{\hat \kappa }_{q }}}},{u_{q - 1}} * {\varsigma _{{{\hat \kappa }_q}}}} \right)_{\mathbb{T}^3}  ,
\end{align*}
\begin{align*}
\mathfrak{E}_{q - 1}^{sto} = & 2\left( {\mathcal{\hat W}_q^{loc},\mathcal{\hat W}_{q - 1} - {{\hat u}_{q - 1}} - w_{q - 1}  * {\varsigma _{{{\hat \kappa }_{q }}}} - {u_{q - 1}} * \left( {{\varsigma _{{{\hat \kappa }_q}}} - {\varsigma _{{{\hat \kappa }_{q - 1}}}}} \right)} \right)_{\mathbb{T}^3}  \\
& + \left\| {{\mathcal{W}_{q - 1}} * \left( {{\varsigma _{{{\hat \kappa }_q}}} - {\varsigma _{{{\hat \kappa }_{q - 1}}}}} \right)} \right\|_{{L^2}}^2 + 2\left( {{\mathcal{W}_{q - 1}} * \left( {{\varsigma _{{{\hat \kappa }_q}}} - {\varsigma _{{{\hat \kappa }_{q - 1}}}}} \right),\mathcal{\hat W}_{q - 1} + \mathcal{\hat W}_q^{loc}} \right)_{\mathbb{T}^3}  \\
& + 2 \int_{0}^{t_l \wedge  \mathfrak{t}} {\left( {{{\hat v}_{q - 1}},{\mathrm{d}}\mathcal{\hat W}_q^{loc} + {\mathrm{d}} {\mathcal{W}_{q - 1}} * \left( {{\varsigma _{{{\hat \kappa }_q}}} - {\varsigma _{{{\hat \kappa }_{q - 1}}}}} \right)} \right)_{\mathbb{T}^3}} \\
& + 2 \int_{0}^{t_l \wedge  \mathfrak{t}} {\left( { w_{q - 1}  * {\varsigma _{{{\hat \kappa }_{q }}}}  + {v_{q - 1}} * \left( {{\varsigma _{{{\hat \kappa }_q}}} - {\varsigma _{{{\hat \kappa }_{q - 1}}}}} \right),{\mathrm{d}}{\mathcal{\hat W}_{q}}} \right)_{\mathbb{T}^3}}  ,
\end{align*}
and
\begin{equation*}
\mathfrak{E}_{q - 1}^{rem} = \frac{1}{2}\left( {{\delta _{q + 1}} - {\delta _{q + 2}}} \right)\mathfrak{M} - \left( {{u_{q - 1}} * \left( {{\varsigma _{{{\hat \kappa }_q}}} - {\varsigma _{{{\hat \kappa }_{q - 1}}}}} \right),{u_{q - 1}} * \left( {{\varsigma _{{{\hat \kappa }_q}}} + {\varsigma _{{{\hat \kappa }_{q - 1}}}}} \right)} \right)_{\mathbb{T}^3} .
\end{equation*}
\par
It follows from \eqref{SJKFHASJJFN} and \eqref{E-OS-SBE-1} that 
\begin{align*}
\left| {\left\| {{w_{o,q - 1}}} \right\|_{{L^2}}^2 - \left\| {{w_{o,q - 1}} * {\varsigma _{{{\hat \kappa }_{q}}}}} \right\|_{{L^2}}^2} \right| = & \left| {{{\left( {{w_{o,q - 1}} - {w_{o,q - 1}} * {\varsigma _{{{\hat \kappa }_{q}}}},{w_{o,q - 1}} + {w_{o,q - 1}} * {\varsigma _{{{\hat \kappa }_{q}}}}} \right) }_{{\mathbb{T}^3}}}} \right|  \\
\lesssim & {{\hat \kappa }_{q}}{\left\| {{w_{o,q - 1}}} \right\|_{{\mathcal{C}^0}}}{\left\| {{w_{o,q - 1}}} \right\|_{{\mathcal{C}^1}}} \\
\lesssim & {\delta _{q + 1}} .
\end{align*}
Employing \eqref{E-OS-SBE-1}, \eqref{E-OS-SBE-2} and the stationary phase lemma, there exists a real number $\alpha \in \left( {0, 1 } \right)$ such that
\begin{align*}
& \left| {\left( {{w_{o,q - 1}} * {\varsigma _{{{\hat \kappa }_{q }}}},{w_{c,q - 1}} * {\varsigma _{{{\hat \kappa }_{q}}}}} \right)_{\mathbb{T}^3} + \left( {{w_{c,{q - 1}}} * {\varsigma _{{{\hat \kappa }_{q }}}}, w_{q - 1} * {\varsigma _{{{\hat \kappa }_{q }}}}} \right)_{\mathbb{T}^3} + \left( {w_{q - 1} * {\varsigma _{{{\hat \kappa }_{q }}}} ,{u_{q - 1}} * {\varsigma _{{{\hat \kappa }_q}}}} \right)_{\mathbb{T}^3}} \right| \\
\lesssim & \left| {\left( {\mathcal{R}{w_{o,q - 1}} * {\varsigma _{{{\hat \kappa }_{q }}}},\nabla \left( {{w_{c,q - 1}} * {\varsigma _{{{\hat \kappa }_{q }}}}} \right)} \right)_{\mathbb{T}^3}} \right| + \left\| {{w_{c,q - 1}} * {\varsigma _{{{\hat \kappa }_{q}}}}} \right\|_{{L^2}}^2 + \left| {\left( {\mathcal{R} \left( { w_{q - 1} * {\varsigma _{{{\hat \kappa }_{q }}}}} \right),\nabla \left( {{u_{q - 1}} * {\varsigma _{{{\hat \kappa }_q}}}} \right)} \right)_{\mathbb{T}^3}} \right| \\
\lesssim & \lambda _q^{\alpha  - 1}  {\left\| {{w_{o,q - 1}}} \right\|_{{\mathcal{C}^0}}}{\left\| {{w_{c,q - 1}}} \right\|_{{\mathcal{C}^1}}} + \left\| {{w_{c,q - 1}}} \right\|_{{\mathcal{C}^0}}^2  + \lambda _q^{\alpha  - 1} {\left\| {{w_{q - 1}}} \right\|_{{\mathcal{C}^0}}}{{\left\| {{u_{q - 1}}} \right\|}_{{\mathcal{C}^1}}} \\
\lesssim & {\delta _{q + 1}}  .
\end{align*}
Thus
\begin{equation}
\left| {\mathfrak{E}_{q - 1}^{osc}} \right| \lesssim {\delta _{q + 1}} .  \label{RSRI-P-mene-GAP-proof-2}
\end{equation}
\par
It follows from \eqref{OS-SBE-C-2}, \eqref{BCBSJFHIG-SFG}, \eqref{E-OS-SBE-1}, \eqref{E-OS-SBE-2} and \eqref{IJSUHD-A-B-2} that
\begin{align*}
& \left| {\left( {\mathcal{\hat W}_q^{loc},\mathcal{\hat W}_{q - 1} - {{\hat u}_{q - 1}} -  w_{q - 1} * {\varsigma _{{{\hat \kappa }_q}}} - {u_{q - 1}} * \left( {{\varsigma _{{{\hat \kappa }_q}}} - {\varsigma _{{{\hat \kappa }_{q - 1}}}}} \right)} \right)_{\mathbb{T}^3}} \right|  \\
\lesssim & \left| {{{\left( {\nabla \left( {\mathcal{\hat W}_{q - 1} - {{\hat u}_{q - 1}}} \right),\mathcal{R}\mathcal{\hat W}_q^{loc}} \right) }_{{\mathbb{T}^3}}}} \right| + \left| {{{\left( {\mathcal{R}\left( { w_{q - 1} * {\varsigma _{{{\hat \kappa }_{q }}}}} \right),\nabla \mathcal{\hat W}_q^{loc}} \right) }_{{\mathbb{T}^3}}}} \right| + {{\hat \kappa }_{q - 1}}{\left\| {\mathcal{\hat W}_q^{loc}} \right\|_{{\mathcal{C}^0}}}{\left\| {{u_{q - 1}}} \right\|_{{\mathcal{C}^1}}}  \\
\lesssim & \lambda _q^{\alpha  - 1}\left( {{{\left\| {\mathcal{\hat W}_q^{loc}} \right\|}_{{\mathcal{C}^0}}} + {{\left\| {\left( { w_{q - 1} * {\varsigma _{{{\hat \kappa }_{q }}}}} \right)} \right\|}_{{\mathcal{C}^0}}}} \right)\left( {{{\left\| {\mathcal{\hat W}_{q - 1}} \right\|}_{{\mathcal{C}^1}}} + {{\left\| {{{\hat u}_{q - 1}}} \right\|}_{{\mathcal{C}^1}}}} \right) + {{\hat \kappa }_{q - 1}}{\left\| {\mathcal{\hat W}_q^{loc}} \right\|_{{\mathcal{C}^0}}}{\left\| {{u_{q - 1}}} \right\|_{{\mathcal{C}^1}}} \\
\lesssim & {\delta _{q + 1}}  ,
\end{align*}
and
\begin{align*}
& \left| {\left\| {{\mathcal{W}_{q - 1}} * \left( {{\varsigma _{{{\hat \kappa }_q}}} - {\varsigma _{{{\hat \kappa }_{q - 1}}}}} \right)} \right\|_{{L^2}}^2 + \left( {{\mathcal{W}_{q - 1}} * \left( {{\varsigma _{{{\hat \kappa }_q}}} - {\varsigma _{{{\hat \kappa }_{q - 1}}}}} \right),\mathcal{\hat W}_{q - 1} + \mathcal{\hat W}_q^{loc}} \right)_{\mathbb{T}^3}} \right| \\
\lesssim & \hat \kappa _{q - 1}^2\left\| {{\mathcal{W}_{q - 1}}} \right\|_{{\mathcal{C}^1}}^2 + {{\hat \kappa }_{q - 1}}\left( {{{\left\| {\mathcal{\hat W}_{q - 1}} \right\|}_{{\mathcal{C}^0}}} + {{\left\| {\mathcal{\hat W}_q^{loc}} \right\|}_{{\mathcal{C}^0}}}} \right){\left\| {{\mathcal{W}_{q - 1}}} \right\|_{{\mathcal{C}^1}}} \\
\lesssim & {\delta _{q + 1}}  .
\end{align*}
Furthermore, we have
\begin{align*}
&\mathbb{E} {\left| {\int_{0}^{t_l \wedge  \mathfrak{t}} {\left( {{{\hat v}_{q - 1}},{\mathrm{d}}\mathcal{\hat W}_q^{loc} + {\mathrm{d}} {\mathcal{W}_{q - 1}} * \left( {{\varsigma _{{{\hat \kappa }_q}}} - {\varsigma _{{{\hat \kappa }_{q - 1}}}}} \right)} \right)_{\mathbb{T}^3}} } \right|^2} \\
& + \mathbb{E}{\left| {\int_{0}^{t_l \wedge  \mathfrak{t}} {\left( { w_{q - 1}  * {\varsigma _{{{\hat \kappa }_{q }}}}  + {v_{q - 1}} * \left( {{\varsigma _{{{\hat \kappa }_q}}} - {\varsigma _{{{\hat \kappa }_{q - 1}}}}} \right),{\mathrm{d}}{\mathcal{\hat W}_{q}}} \right)_{\mathbb{T}^3}} } \right|^2}  \\
\lesssim & \lambda _q^{ - \frac{4}{3} + 2\alpha }  \left\| {\mathcal{R}\left( {\cos \left( {{\lambda _q}{x_3}} \right),\sin \left( {{\lambda _q}{x_3}} \right),0} \right)} \right\|_{{\mathcal{C}^0}}^2\left\| {{{\hat v}_{q - 1}}} \right\|_{{\mathcal{C}^1}}^2  + \hat \kappa _{q - 1}^2{\left( {\sum\limits_{j \leqslant q - 1} {\lambda _j^{\frac{1}{3} + \alpha }} } \right)^2}\left\| {{v_{q - 1}}} \right\|_{{\mathcal{H}^0}}^2 \\
& + \left\| {\mathcal{R}\hat w_{q - 1}} \right\|_{\mathcal{H}}^2\sum\limits_{j \leqslant q} { \lambda _j^{ - \frac{4}{3} + 2\alpha }\left\| {\left( {\cos \left( {{\lambda _{j }}{x_3}} \right),\sin \left( {{\lambda _{j }}{x_3}} \right),0} \right)} \right\|_{\mathcal{H}^1}^2}  + \hat \kappa _{q - 1}^2\left\| {{v_{q - 1}}} \right\|_{\mathcal{H}^1}^2  \\
\lesssim & {\delta _q}\lambda _q^{ - 2} + {\delta _{q - 1}}\lambda _{q - 1}^2\left( {\lambda _q^{ - \frac{{10}}{3} + 2\alpha } + \hat \kappa _{q - 1}^2} \right) .
\end{align*}
Therefore,
\begin{equation}
\left| {\mathbb{E} \mathfrak{E}_{q - 1}^{sto}} \right| \lesssim {\delta _{q + 1}}. \label{RSRI-P-mene-GAP-proof-3}
\end{equation}
\par
Owing to \eqref{IJSUHD-A-B-6}, we obtain
\begin{equation*}
\left| {\left( {{u_{q - 1}} * \left( {{\varsigma _{{{\hat \kappa }_q}}} - {\varsigma _{{{\hat \kappa }_{q - 1}}}}} \right),{u_{q - 1}} * \left( {{\varsigma _{{{\hat \kappa }_q}}} + {\varsigma _{{{\hat \kappa }_{q - 1}}}}} \right)} \right)_{\mathbb{T}^3}} \right| \lesssim {{\hat \kappa }_{q - 1}}{\left\| {{u_{q - 1}}} \right\|_{{\mathcal{C}^1}}} \lesssim {\delta _{q + 1}}  ,
\end{equation*}
which implies that
\begin{equation}
\left| {\mathfrak{E}_{q - 1}^{rem}} \right| \lesssim {\delta _{q + 1}}  . \label{RSRI-P-mene-GAP-proof-4}
\end{equation}
By virtue of the stationary phase lemma and \eqref{IJUHCJD-D-D-A-3}, we deduce that
\begin{equation*}
\left| {\mathrm{Tr} \left( {\sum\limits_{{\mathbf{k}} \ne  - {\mathbf{k}}'} {{\chi _l}{\chi _{l'}}\int_{{\mathbb{T}^3}} {w_{{\mathbf{k}}l}\left( x \right) \otimes w_{{\mathbf{k}}'l'}^u\left( x \right)} \, {\mathrm{d}}x} } \right)} \right| \lesssim \lambda _q^{ - 2}\left\| {a_{{\mathbf{k}}l}^u} \right\|_{{\mathcal{C}^1}}^2 \lesssim {\delta _{q + 1}}  .
\end{equation*}
Since ${{\bar R}_l}$ belongs to the traceless class, we have $\mathrm{Tr} \left( {\sum\limits_{l \in \left[ {0,{\mu _q}} \right]} {\chi _l^2\int_{{\mathbb{T}^3}} {{{\bar R}_l}\left( x \right)} \, {\mathrm{d}}x} } \right) = 0$. Then
\begin{equation}
\left| {\mathrm{Tr}\, \mathfrak{E}_{q - 1}^{Rey}} \right| \lesssim {\delta _{q + 1}} .  \label{RSRI-P-mene-GAP-proof-5}
\end{equation}
\par
Recall that ${\delta _{q + 2}} \sim {a^{ - {b^{q + 1}}\left( {b - 1} \right)}}{\delta _{q + 1}} \leqslant \sigma _q  {\delta _{q + 1}}$ with $1 > \mathop {\lim }\limits_{q \to  + \infty } {\sigma _q} = 0 $. Combining \eqref{RSRI-P-mene-GAP-proof-2}-\eqref{RSRI-P-mene-GAP-proof-5}, we infer that
\begin{equation*}
\left[ {\frac{1}{2}\left( {1 - \sigma _ q} \right)\mathfrak{M} - {C}} \right]{\delta _{q + 1}} \leqslant {\rho _{q,l}}  \leqslant \left( {C + \frac{\mathfrak{M}}{2}} \right){\delta _{q + 1}}  . 
\end{equation*}
We choose a large parameter $\mathfrak{M}$ such that
\begin{equation*}
0 < \frac{1}{2}\left( {1 - \sigma _ q} \right)\mathfrak{M} - {C} \leqslant \left( {C + \frac{\mathfrak{M}}{2}} \right) ,
\end{equation*}
which completes the proof. \qed
%%%%%%%%%%%%%%%%%%%%%%%%%%%%%%%%%%%%%%%%%%%%%%%%%%%%%%%%%%%%%%%%
%%%%%%%%%%%%%%%%%%%%%%%%%%%%%%%%%%%%%%%%%%%%%%%%%%%%%%%%%%%%%%%%
%%%%%%%%%%%%%%%%%%%%%%%%%%%%%%%%%%%%%%%%%%%%%%%%%%%%%%%%%%%%%%%%
\section{The existence of dissipative weak solutions}
In this section, we prove Theorem \ref{CTSE-O-C} in two steps. The first step is to construct the probabilistically strong and analytically weak solutions to the three-dimensional stochastic Euler equations \eqref{BouE1} and establish their uniform boundedness. The second step is to verify the strict energy inequality \eqref{SHUDJ-NENH-B-SIDIS}.
\par
\vspace{1em}
{\textbf{Step 1. The existence of probabilistically strong and analytically weak solutions}}. 
\par
It follows from \eqref{E-OS-SBE-1}, \eqref{E-OS-SBE-2} and \eqref{IJSUHD-A-B-2} that
\begin{equation*}
{\left\| {{w_{q - 1}} + \mathcal{W}_q^{loc}} \right\|_{{\mathcal{C}^0}}} + \lambda _q^{ - 1}{\left\| {{w_{q - 1}} + \mathcal{W}_q^{loc}} \right\|_{{\mathcal{C}^1}}} \lesssim  \delta _q^{\frac{1}{2}} .
\end{equation*}
Then, employing the interpolation inequality, we obtain
\begin{equation*}
{\left\| {{u_q} - {u_{q - 1}}} \right\|_{{\mathcal{C}^\gamma }}} \lesssim  \left\| {{w_{q - 1}} + \mathcal{W}_q^{loc}} \right\|_{{\mathcal{C}^0}}^{1 - \gamma }\left\| {{w_{q - 1}} + \mathcal{W}_q^{loc}} \right\|_{{\mathcal{C}^1}}^\gamma  \lesssim  \delta _q^{\frac{1}{2}}\lambda _q^\gamma  .
\end{equation*}
Choosing $b \to 1$ and $c \to \frac{5}{2}$ so that ${2 cb\gamma  < 1}$ for any $\gamma  \in \left[ {0,\frac{1}{5}} \right)$, we have
\begin{equation*}
\delta _q^{\frac{1}{2}}\lambda _q^\gamma  \lesssim 2{a^{\left( {cb\gamma  - \frac{1}{2}} \right){b^q}}}   \to 0 , \ \ \mbox{as}\ q \to \infty .
\end{equation*}
Therefore, the sequences ${\left\{ {{u_q}} \right\}_{q \in {Z^ + }}}$ is a Cauchy sequence in ${\mathcal{C}^\gamma }\left( {{\mathbb{T}^3}} \right)$. There exists a function $u\left( {t,x} \right) \in {\mathcal{C}^\gamma }\left( {{\mathbb{T}^3}} \right)$ such that
\begin{equation}
{u_q}\left( {t,x} \right) \to u\left( {t,x} \right) , \ \ \mbox{in}\ {\mathcal{C}^\gamma }\left( {{\mathbb{T}^3}} \right) , \ \ \forall \, t \in \left[ {0,T \wedge \mathfrak{t}} \right], \   \mbox{as}\ q \to  \infty .  \label{OSC-SBE-A-proof-a-3}
\end{equation}
\par
Using the fact that $\left( {v \cdot \nabla u,\psi } \right)_{\mathbb{T}^3} = - \left( {v \cdot \nabla \psi ,u} \right)_{\mathbb{T}^3}$ for any $\psi \left( x \right) \in {\mathcal{C}^\infty }\left( {{\mathbb{T}^3};\mathbb{R}} \right) \cap \mathcal{H}$, we derive that
\begin{align*}
& \left| {\int_0^{t \wedge  \mathfrak{t}} {\left( {{u_q}\left( \tau  \right) \cdot \nabla {u_q}\left( \tau  \right) - u\left( \tau  \right) \cdot \nabla u\left( \tau  \right),\psi } \right)_{\mathbb{T}^3}} \, \mathrm{d}\tau } \right| \\
\leqslant & \left| {\int_0^{t \wedge  \mathfrak{t}} {\left( {\left( {{u_q}\left( \tau  \right) - u\left( \tau  \right)} \right) \cdot \nabla {u_q}\left( \tau  \right),\psi } \right)_{\mathbb{T}^3}} \, \mathrm{d}\tau } \right| + \left| {\int_0^{t \wedge  \mathfrak{t}} {\left( {u\left( \tau  \right) \cdot \nabla \left( {{u_q}\left( \tau  \right) - u\left( \tau  \right)} \right),\psi } \right)_{\mathbb{T}^3}} \, \mathrm{d}\tau } \right| \\
\leqslant & \left| {\int_0^{t \wedge  \mathfrak{t}} {\left( {\left( {{u_q}\left( \tau  \right) - u\left( \tau  \right)} \right) \cdot \nabla \psi ,{u_q}\left( \tau  \right)} \right)_{\mathbb{T}^3}} \, \mathrm{d}\tau } \right| + \left| {\int_0^{t \wedge  \mathfrak{t}} {\left( {u\left( \tau  \right) \cdot \nabla \psi ,\left( {{u_q}\left( \tau  \right) - u\left( \tau  \right)} \right)} \right)_{\mathbb{T}^3}} \, \mathrm{d}\tau } \right|  \\
\leqslant & {\left\| \psi  \right\|_{\mathcal{H}^1}}\int_0^{t \wedge  \mathfrak{t}} {{{\left\| {{u_q}\left( \tau  \right) - u\left( \tau  \right)} \right\|}_{{\mathcal{C}^\gamma }}}\left( {{{\left\| {{u_q}\left( \tau  \right)} \right\|}_{\mathcal{H}^0}} + {{\left\| {u\left( \tau  \right)} \right\|}_{\mathcal{H}^0}}} \right)} \, \mathrm{d}\tau  .
\end{align*}
Hence, it follows from \eqref{OSC-SBE-A-proof-a-3} that
\begin{equation}
\left| {\int_0^{t \wedge  \mathfrak{t}} {\left( {{u_q}\left( \tau  \right) \cdot \nabla {u_q}\left( \tau  \right) - u\left( \tau  \right) \cdot \nabla u\left( \tau  \right),\psi } \right)_{\mathbb{T}^3}} \, \mathrm{d}\tau } \right| \to 0 ,\ \ \mbox{as}\ q \to  \infty .   \label{OSC-SBE-A-proof-a-5}
\end{equation}
Invoking the inductive estimates \eqref{IJSUHD-A-B-4} and the interpolation inequality, we obtain
\begin{equation*}
{\left\| {\mathring{R}_q} \right\|_{{\mathcal{C}^\gamma }}}  \lesssim \left\| {\mathring{R}_q} \right\|_{{\mathcal{C}^0}}^{1 - \gamma }\left\| {\mathring{R}_q} \right\|_{{\mathcal{C}^1}}^\gamma   \lesssim {\delta _{q + 1}}\lambda _q^{\gamma  - \alpha } .
\end{equation*}
Due to $c \leqslant \frac{1}{\gamma }$, we have $c \left( {\gamma - \alpha } \right) - 1 < 0$. Then ${\delta _{q + 1}}\lambda _q^{\gamma  - \alpha } \lesssim {a^{{b^{q + 1}}\left( {c \left( {\gamma - \alpha } \right) - 1} \right)}} \to 0$ as $q \to \infty$, which implies that
\begin{equation}
\mathring{R}_q  \left( t  \right) \to 0  , \ \ \mbox{in}\ {\mathcal{C}^\gamma }\left( {{\mathbb{T}^3}} \right) , \ \forall \, t \in \left[ {0,T \wedge \mathfrak{t}} \right] ,\ \mbox{as}\ q \to  \infty .  \label{OSC-SBE-A-proof-a-7}
\end{equation}
\par
It follows from the definition \eqref{Ie-SBE-C-1}, inductive estimates \eqref{IJUHCJD-D-D-A-1}, \eqref{JUIDHI-AS-A}, \eqref{E-OS-SBE-1} and \eqref{E-OS-SBE-2} that
\begin{align*}
& {\left\| {{p_q} - {p_{q - 1}}} \right\|_{{\mathcal{C}^0}}} + \lambda _q^{ - 1}{\left\| {{p_q} - {p_{q - 1}}} \right\|_{{\mathcal{C}^1}}} \\
\lesssim & {\left\| {{w_{o,q - 1}}} \right\|_{{\mathcal{C}^0}}}{\left\| {{w_{c,q - 1}}} \right\|_{{\mathcal{C}^0}}} + {\left\| {{u_{q - 1}} - {{\bar u}_{q - 1}}} \right\|_{{\mathcal{C}^0}}}{\left\| {{w_{q - 1}}} \right\|_{{\mathcal{C}^0}}} + \left\| {{w_{o,q - 1}}} \right\|_{{\mathcal{C}^0}}^2 + \left\| {{w_{c,q - 1}}} \right\|_{{\mathcal{C}^0}}^2 \\
& + \lambda _q^{ - 1}\left( {{{\left\| {{w_{o,q - 1}}} \right\|}_{{\mathcal{C}^0}}}{{\left\| {{w_{c,q - 1}}} \right\|}_{{\mathcal{C}^1}}} + {{\left\| {{w_{c,q - 1}}} \right\|}_{{\mathcal{C}^0}}}{{\left\| {{w_{o,q - 1}}} \right\|}_{{\mathcal{C}^1}}} + {{\left\| {{w_{o,q - 1}}} \right\|}_{{\mathcal{C}^0}}}{{\left\| {{w_{o,q - 1}}} \right\|}_{{\mathcal{C}^1}}}} \right) \\
& + \lambda _q^{ - 1}\left( {{{\left\| {{u_{q - 1}} - {{\bar u}_{q - 1}}} \right\|}_{{\mathcal{C}^0}}}{{\left\| {{w_{q - 1}}} \right\|}_{{\mathcal{C}^1}}} + {{\left\| {{w_{q - 1}}} \right\|}_{{\mathcal{C}^0}}}{{\left\| {{u_{q - 1}} - {{\bar u}_{q - 1}}} \right\|}_{{\mathcal{C}^1}}} + {{\left\| {{w_{c,q - 1}}} \right\|}_{{\mathcal{C}^0}}}{{\left\| {{w_{c,q - 1}}} \right\|}_{{\mathcal{C}^1}}}} \right) \\
\lesssim & \delta _q ,
\end{align*}
which implies that
\begin{equation*}
{\left\| {{p_q} - {p_{q - 1}}} \right\|_{{\mathcal{C}^\gamma }}} \lesssim \left\| {{p_q} - {p_{q - 1}}} \right\|_{{\mathcal{C}^0}}^{1 - \gamma }\left\| {{p_q} - {p_{q - 1}}} \right\|_{{\mathcal{C}^1}}^\gamma  \lesssim \delta _q \lambda _q^\gamma  \to 0 , \ \ \mbox{as}\ q \to \infty .
\end{equation*}
Then, there exists a non-vanishing pressure $p\left( t,x \right) $ such that
\begin{equation*}
{p_q}\left( {t,x} \right) \to p\left( {t,x} \right), \ \ \text{as}\ q \to  \infty ,
\end{equation*}
which implies that
\begin{equation}
\left( {\nabla {p_q} \left( t  \right)  - \nabla p\left( t  \right) ,\psi } \right) _{\mathbb{T}^3}  \to 0, \ \ \forall \, \psi \left( x \right) \in {\mathcal{C}^\infty }\left( {{\mathbb{T}^3}} \right) , \ t \in \left[ {0,T \wedge \mathfrak{t}} \right] , \ \  \text{as}\ q \to  \infty  .   \label{OSC-SBE-A-proof-a-9}
\end{equation}
\par
Similarly, it follows from \eqref{IJSUHD-A-B-2} and the interpolation inequality that there exists a Wiener process $ {\mathcal{W}}\left( {t,x} \right) \in \mathscr{W}$ such that
\begin{equation}
{\mathcal{W}_q}\left( {t,x} \right) \to {\mathcal{W}}\left( {t,x} \right) , \ \ \mbox{in}\ {\mathcal{C}^\gamma }\left( {{\mathbb{T}^3}} \right),\ \forall\, t \in \left[ {0,T \wedge \mathfrak{t}} \right], \ \text{as}\ q \to  \infty .  \label{OSC-SBE-A-proof-a-10}
\end{equation}
Consequently, in view of \eqref{OSC-SBE-A-proof-a-3}-\eqref{OSC-SBE-A-proof-a-10}, the limit $\left( {u , p,\mathcal{W}} \right)$ fulfills the three-dimensional stochastic Euler equations \eqref{BouE1} in distribution.
\par
For any $\xi = \frac{1}{5} - \gamma \in \left( {0,\frac{1}{5}} \right]$, it follows from \eqref{OSC-SBE-A-proof-a-3} that
\begin{align*}
\mathop {\sup }\limits_{t \in \left[ {0,T \wedge \mathfrak{t}} \right]} \left\| {{u}\left( t  \right)} \right\|_{{\mathcal{C}^{\frac{1}{5} - \xi }}} & \lesssim  \mathop {\sup }\limits_{t \in \left[ {0,T \wedge \mathfrak{t}} \right]}  {\mathop {\sup }\limits_{q \in {\mathbb{Z}^ + }} \left\| {{u_q}\left( t  \right)} \right\|_{{\mathcal{C}^{\frac{1}{5} - \xi }}}}    \\
& \leqslant  C\left( {{{ \lambda }_1}} \right) + C \cdot {\left( {\mathop {\sup }\limits_{q \in {\mathbb{Z}^ + }} \sum\limits_{j = 1}^{q - 1} {\delta _{j + 1}^{\frac{1}{2}}\lambda _{j + 1}^{\frac{1}{5} - \xi }} } \right)}   \\
& \leqslant  C\left( {{{ \lambda }_1}} \right)  + {\mathop {\sup }\limits_{q \in {\mathbb{Z}^ + }} \sum\limits_{j = 1}^{q - 1} {{a^{{b^{j + 1}}\left( {\left( {\frac{1}{5} - \xi } \right)bc - \frac{1}{2}} \right)}}} } . 
\end{align*}
Due to $\xi  > \frac{1}{5} - \frac{1}{{2bc}} > 0$, there exists a positive constant ${C\left( {\xi } \right)}$ such that
\begin{equation}
\mathop {\sup }\limits_{t \in \left[ {0,T \wedge \mathfrak{t}} \right]} \left\| {{u}\left( t  \right)} \right\|_{{\mathcal{C}^{\frac{1}{5} - \xi }}}  \leqslant C\left( {{{ \lambda }_1}} \right) +  {\mathop {\sup }\limits_{q \in {\mathbb{Z}^ + }} \sum\limits_{j = 1}^{q - 1} {{a^{ - C\left( {\xi } \right){b^{j + 1}}}}} } .  \label{djufhx-asda-1}
\end{equation}
Since $b > 1$, there exists an integer ${N^*} > 1$ such that ${b^{j + 1}} \geqslant j + 1$ for any $j \geqslant {N^*}$. One then has ${a^{ - C\left( {\xi } \right){b^{j + 1}}}} \leqslant {a^{ - C\left( {\xi } \right)\left( {j + 1} \right)}}$ as $j \geqslant {N^*}$ and $a > 1$. When $q > {N^*} + 1$, we derive that
\begin{align}
\mathop {\lim }\limits_{q \to  + \infty } \sum\limits_{j = {N^*}}^{q - 1} {{a^{ - C\left( {\xi } \right){b^{j + 1}}}}} & \leqslant  \mathop {\lim }\limits_{q \to  + \infty } \sum\limits_{j = {N^*}}^{q - 1} {{a^{ - C\left( {\xi } \right)\left( {j + 1} \right)}}} \nonumber \\
& = \mathop {\lim }\limits_{q \to  + \infty } \frac{{{a^{ - C\left( {\xi } \right)\left( {{N^*} + 1} \right)}}}}{{1 - {a^{ - C\left( {\xi } \right)}}}}\left( {1 - {a^{ - C\left( {\xi } \right)\left( {q - {N^*}} \right)}}} \right) \nonumber \\
& = \frac{{{a^{ - C\left( {\xi } \right)\left( {{N^*} + 1} \right)}}}}{{1 - {a^{ - C\left( {\xi } \right)}}}} . \label{dasdasgkhic}
\end{align}
Hence, it follows from \eqref{djufhx-asda-1} and \eqref{dasdasgkhic} that
\begin{equation*}
\mathop {\sup }\limits_{t \in \left[ {0,T \wedge \mathfrak{t}} \right]} \left\| {{u}\left( t  \right)} \right\|_{{\mathcal{C}^{\gamma }}} \leqslant  C\left( {{{ \lambda }_1}} \right) +  {{\sum\limits_{j = 1}^{N - 1} {{a^{ - C\left( {\xi } \right){b^{j + 1}}}}} } + \frac{{{a^{ - C\left( {\xi } \right)\left( {N + 1} \right)}}}}{{1 - {a^{ - C\left( {\xi } \right)}}}}} .
\end{equation*}
\par
\vspace{1em}
{\textbf{Step 2. Dissipative property}}. 
\par
Denote $\mathcal{\tilde E}_{q}^v \left( t \right) \triangleq \left\| {v_{q}\left( t \right)} \right\|_{\mathcal{H}}^2 $. We have $\mathcal{\tilde E}_q^v\left( t \right) - \mathcal{\tilde E}_{q - 1}^v\left( t \right) = \left\| {{w_{q - 1}}\left( t \right)} \right\|_{\mathcal{H}}^2 + 2\left( {{v_{q - 1}},{w_{q - 1}}} \right)_{\mathbb{T}^3}$. It follows from \eqref{E-OS-SBE-1}, \eqref{E-OS-SBE-2} and \eqref{IJSUHD-A-B-6} that
\begin{equation*}
\mathop {\sup }\limits_{t \in \left[ {0,T \wedge \mathfrak{t}} \right]} \left( {\left\| {{w_{q - 1}}\left( t \right)} \right\|_{\mathcal{H}}^2  + 2\left| {\left( {{v_{q - 1}},{w_{q - 1}}} \right)_{\mathbb{T}^3}} \right| } \right) \lesssim \delta _q^{\frac{1}{2}} ,
\end{equation*}
which implies that
\begin{equation}\label{DIFFFISJA-A-1}
\left| {\mathcal{\tilde E}_q^v\left( t \right) - \mathcal{\tilde E}_{q - 1}^v\left( t \right)} \right|  \to 0 , \ \ \mbox{as}\ q \to \infty .
\end{equation}
Using the same argument as that for \eqref{OSC-SBE-A-proof-a-3}, we deduce that
\begin{equation}\label{DIFFFISJA-A-2}
{{v_{q}}\left( t,x  \right)} \to  {v\left( t,x  \right)},\ \ \mbox{in}\ {\mathcal{C}^\gamma }\left( {{\mathbb{T}^3}} \right),\ \ \forall\, t \in \left[ {0,T \wedge \mathfrak{t}} \right], \ \text{as}\ q \to  \infty ,
\end{equation}
where ${{v}\left( t,x  \right)}  = {{u}\left( t,x  \right)}  - {\mathcal{W}}\left( {t,x} \right) $. Combining \eqref{SJKFJdafj-sagfg}, \eqref{RSRI-P-mene-GAP}, \eqref{DIFFFISJA-A-1} and \eqref{DIFFFISJA-A-2}, there exits a function $\mathcal{\tilde E}^ v \left( t \right) = \left\| {v\left( t \right)} \right\|_{\mathcal{H}}^2$ such that
\begin{equation*}
{\mathfrak{e}} \left( t \right) < {\mathfrak{e}} \left( t_l \right)  \lesssim \mathcal{\tilde E}_q^v \left( t_l \right)  + {\delta _{q + 1}} \to {{ \mathcal{\tilde E}}^v} \left( t_l \right)  < +  \infty  , \ \ \forall \, t \in \left[ {0,T} \right] ,\   \mbox{as}\ q \to  \infty  .
\end{equation*}
This shows that the function ${\mathfrak{e}}\left( t \right)$ is well-defined as $q \to  \infty$.
\par
Employing the It\^{o} formula, we obtain
\begin{align*}
\mathbb{E}\left\| {{{\hat u}_{q - 1}}\left( {t \wedge \mathfrak{t}} \right)} \right\|_{{L^2}}^2 = & \mathbb{E}\left\| {{{\hat u}_{q - 1}}\left( {{t_l} \wedge \mathfrak{t}} \right)} \right\|_{{L^2}}^2 + 2\mathbb{E}\int_{{t_l} \wedge \mathfrak{t}}^{t \wedge \mathfrak{t}} {{{\left( {{{\hat u}_{q - 1}},{A_u}} \right)}_{{\mathbb{T}^3}}}} {\text{d}}\tau   \\
&  - 2\mathbb{E}\int_{{t_l} \wedge \mathfrak{t}}^{t \wedge \mathfrak{t}} {{{\left( {\nabla {{\hat u}_{q - 1}},{{\hat R}_{q - 1}}} \right)}_{{\mathbb{T}^3}}}} {\text{d}}\tau  + \mathbb{E}\left( {t \wedge \mathfrak{t} - {t_l} \wedge \mathfrak{t}} \right) ,
\end{align*}
where ${A_u} \triangleq {\text{div}}\left( {{{\hat u}_{q - 1}} \otimes {{\hat u}_{q - 1}}} \right) - {\text{div}}\left( {{u_{q - 1}} \otimes {u_{q - 1}}} \right)*{\varsigma _{{{\hat \kappa }_{q - 1}}}}$. Then
\begin{align*}
& \mathbb{E}\left\| {{{\hat u}_{q - 1}}\left( {t \wedge \mathfrak{t}} \right)} \right\|_{{L^2}}^2 - \mathbb{E}\left\| {{{\hat u}_{q - 1}}\left( {{t_l} \wedge \mathfrak{t}} \right)} \right\|_{{L^2}}^2 + \mathbb{E}\left\| {{{ \mathcal{\hat W}}_{q - 1}}\left( {{t_l} \wedge \mathfrak{t}} \right)} \right\|_{{L^2}}^2 - \mathbb{E}\left\| {{{ \mathcal{\hat W}}_{q - 1}}\left( {t \wedge \mathfrak{t}} \right)} \right\|_{{L^2}}^2 \\
= & 2\mathbb{E}\int_{{t_l} \wedge \mathfrak{t}}^{t \wedge \mathfrak{t}} {{{\left( {{{\hat u}_{q - 1}},{A_u}} \right)}_{{\mathbb{T}^3}}}} {\text{d}}\tau  - 2\mathbb{E}\int_{{t_l} \wedge \mathfrak{t}}^{t \wedge \mathfrak{t}} {{{\left( {\nabla {{\hat u}_{q - 1}},{{\hat R}_{q - 1}}} \right)}_{{\mathbb{T}^3}}}} {\text{d}}\tau  .
\end{align*}
Owing to ${\left\| {{A_u}} \right\|_{{\mathcal{C}^0}}} \lesssim \hat \kappa _q^2\left\| {{{\hat u}_q}} \right\|_{{\mathcal{C}^1}}^2 \lesssim \delta _q^{\frac{1}{2}}$, we have
\begin{equation*}
\left| {\sum\limits_{l \in \left[ {0,\left\lceil {{\mu _q}T} \right\rceil } \right]} {\chi _l^2\mathbb{E}\int_{{t_l} \wedge \mathfrak{t}}^{t \wedge \mathfrak{t}} {{{\left( {{{\hat u}_q},{A_u}} \right)}_{{\mathbb{T}^3}}}} {\text{d}}\tau } } \right| \lesssim \delta _q^{\frac{1}{2}}  .
\end{equation*}
Notice that
\begin{equation*}
\left| {\sum\limits_{l { \in \left[ {0,\left\lceil {{\mu _q}T} \right\rceil } \right]}} {\chi _l^2 \mathbb{E} \int_{{t_l} \wedge \mathfrak{t}}^{t \wedge \mathfrak{t}} {{{\left( {\nabla {{\hat u}_{q - 1}},{{\hat R}_{q - 1}}} \right)}_{{\mathbb{T}^3}}}} {\text{d}}\tau } } \right| \lesssim \mu _q^{ - 1}\delta _{q - 1}^{\frac{1}{2}}{\delta _q}\lambda _{q - 1}^{1 - \alpha }  .
\end{equation*}
We then infer that
\begin{align*}
& \left| {\sum\limits_{l { \in \left[ {0,\left\lceil {{\mu _q}T} \right\rceil } \right]}} {\chi _l^2  \mathbb{E}\left\| {{{\hat u}_{q - 1}}\left( {t \wedge \mathfrak{t}} \right)} \right\|_{{L^2}}^2 - \mathbb{E}\left\| {{{\hat u}_{q - 1}}\left( {{t_l} \wedge \mathfrak{t}} \right)} \right\|_{{L^2}}^2 + \mathbb{E}\left\| {{{ \mathcal{\hat W}}_{q - 1}}\left( {{t_l} \wedge \mathfrak{t}} \right)} \right\|_{{L^2}}^2 - \mathbb{E}\left\| {{{ \mathcal{\hat W}}_{q - 1}}\left( {t \wedge \mathfrak{t}} \right)} \right\|_{{L^2}}^2}} \right|  \\
\lesssim & \delta _q^{\frac{1}{2}} + \mu _q^{ - 1}\delta _{q - 1}^{\frac{1}{2}}{\delta _q}\lambda _{q - 1}^{1 - \alpha }  .
\end{align*}
Denote ${\rho _{q - 1}} = 3\left| {{\mathbb{T}^3}} \right|\sum\limits_{l { \in \left[ {0,\left\lceil {{\mu _q}T} \right\rceil } \right]}} {\chi _l^2{\rho _{q - 1,l}}} $. Therefore, using the fact that
\begin{equation*}
\left| {\sum\limits_{l { \in \left[ {0,\left\lceil {{\mu _q}T} \right\rceil } \right]}} {\chi _l^2\left( {\mathfrak{e}\left( t \right) - \mathfrak{e}\left( {{t_l}} \right)} \right)} } \right| \lesssim \mu _q^{ - 1}  ,
\end{equation*}
we obtain
\begin{equation}
\left| {\mathfrak{e} - \left( {{\rho _{q - 1}} + \mathbb{E}\left\| {{{\hat u}_{q - 1}}} \right\|_{{L^2}}^2 - \mathbb{E}\left\| {{{ \mathcal{\hat W}}_{q - 1}}} \right\|_{{L^2}}^2} \right)} \right| \lesssim  \delta _q^{\frac{1}{2}} + \mu _q^{ - 1} + \mu _q^{ - 1}\delta _{q - 1}^{\frac{1}{2}}{\delta _q}\lambda _{q - 1}^{1 - \alpha } . \label{KSJFKAH-FAG-AF-1}
\end{equation}
\par
Since
\begin{equation*}
\left| {\mathbb{E}\left\| {{{\hat u}_q}} \right\|_{{L^2}}^2 - \mathbb{E}\left\| {{{\hat u}_{q - 1}}} \right\|_{{L^2}}^2 + \mathbb{E}\left\| {{{ \mathcal{\hat W}}_{q - 1}}} \right\|_{{L^2}}^2 - \mathbb{E}\left\| {{{ \mathcal{\hat W}}_q}} \right\|_{{L^2}}^2} \right| \lesssim \mathbb{E}{\left\| {{w_q}} \right\|_{{L^2}}} + \mathbb{E}{\left\| {\mathcal{W}_q^{loc}} \right\|_{{L^2}}} ,
\end{equation*}
it follows from \eqref{E-OS-SBE-1}, \eqref{E-OS-SBE-2} and \eqref{RSRI-P-mene-GAP} that
\begin{equation}
\left| {\mathbb{E}\left\| {{{\hat u}_q}} \right\|_{{L^2}}^2 - \mathbb{E}\left\| {{{ \mathcal{\hat W}}_q}} \right\|_{{L^2}}^2 - \left( {{\rho _{q - 1}} + \mathbb{E}\left\| {{{\hat u}_{q - 1}}} \right\|_{{L^2}}^2 - \mathbb{E}\left\| {{{ \mathcal{\hat W}}_{q - 1}}} \right\|_{{L^2}}^2} \right)} \right| \lesssim  \delta _q^{\frac{1}{2}}  .  \label{KSJFKAH-FAG-AF-2}
\end{equation}
Combining \eqref{KSJFKAH-FAG-AF-1}, \eqref{KSJFKAH-FAG-AF-2} and the fact that 
\begin{equation*}
\left| {\mathbb{E}\left\| {{{\hat u}_q}} \right\|_{{L^2}}^2 - \mathbb{E}\left\| {{u_q}} \right\|_{{L^2}}^2 + \mathbb{E}\left\| {{\mathcal{W}_q}} \right\|_{{L^2}}^2 - \mathbb{E}\left\| {{{ \mathcal{\hat W}}_q}} \right\|_{{L^2}}^2} \right| \lesssim \delta _q^{\frac{1}{2}} ,
\end{equation*}
we derive that
\begin{align*}
& \left| {\mathfrak{e} - \mathbb{E}\left\| {{u_q}} \right\|_{{L^2}}^2 + \mathbb{E}\left\| {{\mathcal{W}_q}} \right\|_{{L^2}}^2} \right| \\
\lesssim & \left| {\mathfrak{e} - \left( {{\rho _{q - 1}} + \mathbb{E}\left\| {{{\hat u}_{q - 1}}} \right\|_{{L^2}}^2 - \mathbb{E}\left\| {{{\mathcal{\hat W}}_{q - 1}}} \right\|_{{L^2}}^2} \right)} \right| \\
& + \left| {\mathbb{E}\left\| {{{\hat u}_q}} \right\|_{{L^2}}^2 - \mathbb{E}\left\| {{{\mathcal{\hat W}}_q}} \right\|_{{L^2}}^2 - \left( {{\rho _{q - 1}} + \mathbb{E}\left\| {{{\hat u}_{q - 1}}} \right\|_{{L^2}}^2 - \mathbb{E}\left\| {{{\mathcal{\hat W}}_{q - 1}}} \right\|_{{L^2}}^2} \right)} \right|  \\
& + \left| {\mathbb{E}\left\| {{{\hat u}_q}} \right\|_{{L^2}}^2 - \mathbb{E}\left\| {{u_q}} \right\|_{{L^2}}^2 + \mathbb{E}\left\| {{\mathcal{W}_q}} \right\|_{{L^2}}^2 - \mathbb{E}\left\| {{{\mathcal{\hat W}}_q}} \right\|_{{L^2}}^2} \right|  \\
\lesssim &  \delta _q^{\frac{1}{2}} +  \mu _q^{ - 1} + \mu _q^{ - 1}\delta _{q - 1}^{\frac{1}{2}}{\delta _q}\lambda _{q - 1}^{1 - \alpha } ,
\end{align*}
which implies that
\begin{equation}
{\mathfrak{e}}\left( t \right) - \mathbb{E} \left\| {u\left( t \wedge  \mathfrak{t} \right)} \right\|_{{L^2}}^2 + {\mathrm{Tr}} \,  Q  \cdot \mathbb{E}   \left( {t \wedge  \mathfrak{t} } \right) = 0 , \ \ \forall \, t \in \left[ {0,T} \right] .  \label{xiuh-IJHYDJFNVHF}
\end{equation}
Consequently, it follows from \eqref{xiuh-IJHYDJFNVHF} that the strict energy inequality \eqref{SHUDJ-NENH-B-SIDIS} holds. The proof is complete. \qed
%%%%%%%%%%%%%%%%%%%%%%%%%%%%%%%%%%%%%%%%%%%%%%
%%%%%%%%%%%%%%%%%%%%%%%%%%%%%%%%%%%%%%%%%%%%%%
%%%%%%%%%%%%%%%%%%%%%%%%%%%%%%%%%%%%%%%%%%%%%%
%%%%%%%%%%%%%%%%%%%%%%%%%%%%%%%%%%%%%%%%%%%%%%
\section{Regularity threshold for dissipative weak solutions}
In this section, we focus on the regularity threshold for dissipative weak solutions to the three-dimensional stochastic Euler equations \eqref{BouE1}.
%%%%%%%%%%%%%%%%%%%%%%%%%%%%%%%%%%%%%%%%%%%%%%%%%%%
\subsection{Kolmogorov 4/5 law}
For the self-similar, homogeneous and isotropic turbulence, Kolmogorov \cite{Kol1941a,Kol1941b,Kol1941c,Kol1941d} sparked considerable interest in the functional relationship between the $p$-th order moment of increment $u\left( {t,x + h} \right) - u\left( {t,x} \right)$ and the shift length $\left| h \right|$, and originally proposed the Kolmogorov's theory of turbulence. The most impressive one, called Kolmogorov 4/5 law, provides the exact equality (also see \cite{TurbulenceFrisch})
\begin{equation*}
\mathbb{E}{\left( {\left( {u\left( {x + h} \right) - u\left( x \right)} \right) \cdot \frac{h}{{\left| h \right|}}} \right)^3} =  - \frac{4}{5}\varepsilon \left| h \right| , \ \ \left| h \right| \in \left[ {{l_D},{l_I}} \right] , 
\end{equation*}
where $\varepsilon > 0$ is the dissipation rate of energy, and $\left[ {{l_D},{l_I}} \right]  \subset \mathbb{R}^+$ constitutes the inertial range. The Kolmogorov 4/5 law has been rigorously established for the stochastic Navier--Stokes and the deterministic Euler equations (see \cite{MR3943488,Zbl07892740,EyinkLocal,arXiv:2304.14470,Zbl1553.76041,MR4260784}). However, the rigorous proof of the Kolmogorov 4/5 law for the three-dimensional stochastic Euler equations \eqref{BouE1} remains an open problem. Therefore, we first establish the Kolmogorov 4/5 law of dissipative solutions for the three-dimensional stochastic Euler equations \eqref{BouE1}.
\par
For the dissipative, probabilistically strong and analytically weak solution ${u}\left( {t,x} \right)$ of Eq.\eqref{BouE1}, we denote ${\mathscr{T}_h}u  \left( {t,x} \right) \triangleq u  \left( {t,x + h} \right)$,
\begin{equation*}
\Gamma  \left( {t,h} \right) \triangleq \int_{{\mathbb{T}^3}} {u  \left( {t,x} \right) \otimes {\mathscr{T}_h}u  \left( {t,x} \right)} \, \mathrm{d}x,
\end{equation*}
\begin{align*}
{\mathscr{D}_i}  \left( {t,h} \right) \triangleq  \int_{{\mathbb{T}^3}} & \left[ { \left( {{\mathscr{T}_h}{u}\left( {t,x} \right) - {u}\left( {t,x} \right)} \right) \otimes \left( {{\mathscr{T}_h}{u}\left( {t,x} \right) - {u}\left( {t,x} \right)} \right)} \right] \\
& \left( {{\mathscr{T}_h}u_i \left( {t,x} \right) - u_i \left( {t,x} \right)} \right) \mathrm{d}x,\ \ i = 1, 2 , 3 ,
\end{align*}
and
\begin{equation*}
\mathscr{K}\left( h \right) \triangleq \int_{{\mathbb{T}^3}} {\sum\limits_{j \in \mathbb{Z}^+}  {{b_j^2} \lambda _j^{ - \frac{4}{3} } \left( {{\mathcal{G}_j}\left( x \right) \otimes {\mathscr{T}_h}{\mathcal{G}_j}\left( x \right)} \right)} } \, \mathrm{d}x ,
\end{equation*}
where ${\mathcal{G}_j}\left( x \right) \triangleq \left( {\cos \left( {{\lambda _j} {x_3}} \right),\sin \left( {{\lambda _j} {x_3}} \right),0} \right)$. Define the real matrix inner product
\begin{equation*}
{\left( {{a_{ij}}} \right)_{i,j \in {\mathbb{Z}^ + }}}:{\left( {{b_{ij}}} \right)_{i,j \in {\mathbb{Z}^ + }}} \triangleq \sum\limits_{i,j \in {\mathbb{Z}^ + }} {{a_{ij}}{b_{ij}}} , \ \  {for}\ {a_{ij}}\in {\mathbb{R}^1}, \ {b_{ij}} \in {\mathbb{R}^1}. 
\end{equation*}
Employing the same argument as Bedrossian, Coti Zelati, Punshon--Smith and Weber \cite[Proposition 3.1]{MR3943488}, it is straightforward to establish the K\'{a}rm\'{a}n-Howarth-Monin relationship as follows.
\par
\begin{lemma}\label{KHMR-LHS}
Let $\phi \left( {l} \right)$ and $\varphi \left( {l} \right)$ be smooth and compactly supported on $\left( {0,+ \infty } \right)$ satisfying
\begin{equation*}%\label{CSFOR3T}
\int_{0}^\infty {{l^2}\phi \left( l \right)} \mathrm{d}l < + \infty ,\ \ \ \  \int_{0}^\infty {{l^2}\varphi \left( l \right)} \mathrm{d}l < + \infty .
\end{equation*}
Suppose that
\begin{equation*}
\eta \left( h \right) = \phi \left( {\left| h \right|} \right)I + \varphi \left( {\left| h \right|} \right)\left( {\tilde h \otimes \tilde h} \right), \ \ \ \ \tilde h = {\left| h \right|^{ - 1}}h, \ h \in \mathbb{R}^3 .
\end{equation*}
Then, we have
\begin{align}
& \mathbb{E} \int_{{\mathbb{R}^3}} {\eta \left( h \right):\left( {\Gamma  \left( {t,h} \right) - \Gamma  \left( {s,h} \right)} \right)} \mathrm{d}h  + \frac{1}{2} \mathbb{E} \int_{s}^t {\int_{{\mathbb{R}^3}} {\sum\limits_{i = 1}^3 {\left[ {\frac{\partial }{{\partial {h_i}}}\eta \left( h \right):{\mathscr{D}_i } \left( {\tau ,h} \right)} \right]} } \mathrm{d}h} \, \mathrm{d}\tau   \nonumber \\
 = &  \int_{s}^t {\int_{{\mathbb{R}^3}} {\eta \left( h \right):\mathscr{K}\left( h \right)} \mathrm{d}h} \, \mathrm{d}\tau  ,\ \  \forall \, 0 \leqslant s < t \leqslant T . \label{KHMR-LHS-1}
\end{align}
\end{lemma}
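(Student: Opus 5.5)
We derive the identity \eqref{KHMR-LHS-1} from the It\^{o} formula for the two-point correlation $\Gamma(t,h)$, in the spirit of \cite[Proposition 3.1]{MR3943488}. Since $u$ is only a weak solution (H\"{o}lder in space), we first mollify in space: let $u^\epsilon = u * \varsigma_\epsilon$. By Definition \ref{WMStoBous}, $u^\epsilon$ satisfies, pointwise in $x$ and $\mathbb{P}$-a.s.,
\begin{equation*}
\mathrm{d}u^\epsilon + \mathrm{div}\left( (u\otimes u)*\varsigma_\epsilon \right)\mathrm{d}t + \nabla p^\epsilon \,\mathrm{d}t = \mathrm{d}\mathcal{W}^\epsilon ,
\end{equation*}
which is a finite-dimensional-in-effect semimartingale with smooth spatial dependence. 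We apply the It\^{o} product rule to $u^\epsilon(t,x)\otimes u^\epsilon(t,x+h)$ and integrate over $x\in\mathbb{T}^3$ to obtain $\mathrm{d}\Gamma^\epsilon(t,h)$. The quadratic covariation term is $\mathrm{d}\langle \mathcal{W}^\epsilon(x),\mathcal{W}^\epsilon(x+h)\rangle$. Using the decomposition \eqref{BDHFYSJF-SD} with independent $\mathfrak{B}_j$, this term equals $\sum_j b_j^2\lambda_j^{-4/3}\,\mathcal{G}_j^\epsilon(x)\otimes\mathscr{T}_h\mathcal{G}_j^\epsilon(x)\,\mathrm{d}t$. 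Integrated in $x$, it gives $\mathscr{K}^\epsilon(h)\,\mathrm{d}t$. The It\^{o} integrals $\int (\mathrm{d}\mathcal{W}^\epsilon\otimes \mathscr{T}_h u^\epsilon + u^\epsilon\otimes \mathrm{d}\mathscr{T}_h\mathcal{W}^\epsilon)\,\mathrm{d}x$ are true martingales up to the stopping time, because $u$ is bounded by \eqref{djfkshhgss} and $\mathrm{Tr}\,Q<\infty$. Hence they vanish in expectation.

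Next we handle the drift terms, testing against $\eta(h)$ and integrating in $h$. The pressure contributions vanish: after the change of variables $x\mapsto x-h$ they become $\int \partial_{x_i}p^\epsilon\, \mathscr{T}_h u^\epsilon_j\,\mathrm{d}x$ and similar terms, which vanish by $\mathrm{div}\,u=0$ (integrate by parts in $x$). For the nonlinear terms, we use that translation commutes with derivatives, $\partial_{x}\mathscr{T}_h = \partial_h\mathscr{T}_h$. This rewrites $\int \mathrm{div}(u\otimes u)^\epsilon\otimes\mathscr{T}_h u^\epsilon + u^\epsilon\otimes \mathscr{T}_h\mathrm{div}(u\otimes u)^\epsilon\,\mathrm{d}x$ as an $h$-divergence of cubic correlations. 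Integrating by parts in $h$ against $\eta$ moves the derivative onto $\partial_{h_i}\eta$; this is legitimate because $\phi,\varphi$ are compactly supported in $(0,\infty)$, so there are no boundary terms and no singularity at $h=0$. We then expand $\mathscr{D}_i$ via $\delta_h u = \mathscr{T}_hu-u$. By incompressibility and translation invariance, the terms like $\int u\otimes u\, u_i\,\mathrm{d}x$ and $\int u\otimes u\,\mathscr{T}_h u_i\,\mathrm{d}x$ that do not depend on $h$, or are $h$-divergence-free, drop out after pairing with $\partial_{h_i}\eta$. The surviving cubic correlations are exactly $-\frac12\sum_i \partial_{h_i}\eta:\mathscr{D}_i$. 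This is the standard algebra of \cite{MR3943488}, and it uses the symmetry $\eta(h)=\eta(-h)$ of the isotropic form of $\eta$.

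Finally, we integrate in time from $s$ to $t$ (stopped at $\mathfrak{t}$), take expectations, and let $\epsilon\to0$. The boundedness \eqref{djfkshhgss} together with the uniform $\mathcal{C}^\gamma$ convergence $u^\epsilon\to u$ gives convergence of $\Gamma^\epsilon$ and $\mathscr{D}_i^\epsilon$ by dominated convergence. The convergence $\mathscr{K}^\epsilon\to\mathscr{K}$ follows from $\sum_j b_j^2\lambda_j^{-4/3}<\infty$.

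The main obstacle is the commutator $(u\otimes u)*\varsigma_\epsilon - u^\epsilon\otimes u^\epsilon$ in the nonlinear term. With only $\gamma$-H\"{o}lder regularity it is not small in any strong sense. However, after pairing with $\partial_h\eta$ (smooth, with the derivative already moved off $u$) we only need its $L^1$ convergence to zero, which follows from $\|u\|_{\mathcal{C}^0}$ bounds and uniform continuity. A secondary point is justifying the It\^{o}--Wentzell / product rule for the translated field. This is fine after mollification, since every object is then smooth in $x$.
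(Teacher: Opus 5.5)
Your route (mollify, apply the It\^{o} product rule to $u^\epsilon(x)\otimes u^\epsilon(x+h)$, read off $\mathscr{K}$ from the quadratic covariation, integrate by parts in $h$, let $\epsilon\to0$) is the Bedrossian--Coti Zelati--Punshon-Smith--Weber argument that the paper invokes without further detail. However, your pressure step fails as written.

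The claim that $\int_{\mathbb{T}^3}\partial_{x_i}p\,\mathscr{T}_hu_j\,\mathrm{d}x$ vanishes by $\mathrm{div}\,u=0$ is false for a fixed pair $(i,j)$. Integrating by parts gives $-\partial_{h_i}\pi_j(h)$ with $\pi_j(h)\triangleq\int_{\mathbb{T}^3}p\,\mathscr{T}_hu_j\,\mathrm{d}x$. Incompressibility only yields $\sum_j\partial_{h_j}\pi_j=0$, i.e.\ the trace vanishes, not the individual components. The same applies to $\tilde\pi_i(h)\triangleq\int_{\mathbb{T}^3}u_i\,\mathscr{T}_hp\,\mathrm{d}x$ from the other factor.

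These terms disappear only after pairing with $\eta$ and integrating in $h$, and this is exactly where the isotropic form of $\eta$ is needed. One computes $\sum_i\partial_{h_i}\eta_{ij}(h)=\partial_{h_j}\bigl[\Psi(|h|)\bigr]$ with
\[
\Psi(r)=\phi(r)+\varphi(r)-2\int_r^{\infty}\frac{\varphi(\sigma)}{\sigma}\,\mathrm{d}\sigma ,
\]
which is smooth and compactly supported on $\mathbb{R}^3$ (constant near $h=0$). Hence $\sum_{i,j}\int\eta_{ij}\,\partial_{h_i}\pi_j\,\mathrm{d}h=\int\Psi(|h|)\,\mathrm{div}_h\pi\,\mathrm{d}h=0$, and similarly for $\tilde\pi$.

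You place the isotropy in the cubic algebra instead, but that step uses only $\eta^{\top}=\eta$. If $N$ denotes the nonlinear part of $\partial_t\Gamma$, then $\sum_i\partial_{h_i}\mathscr{D}_i=N+N^{\top}$ pointwise in $h$ (the two $h$-divergence-free cubic terms drop out). Therefore $\int\eta:N\,\mathrm{d}h=-\frac12\sum_i\int\partial_{h_i}\eta:\mathscr{D}_i\,\mathrm{d}h$ for every symmetric $\eta$, and evenness in $h$ plays no role.

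Three smaller points.

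First, Definition \ref{WMStoBous} tests only against divergence-free $\psi$, so it does not determine $p$. Before writing the mollified equation pointwise in $x$ you have to fix $p=(-\Delta)^{-1}\partial_i\partial_j(u_iu_j)$; the noise is divergence-free and contributes nothing. It is this $p$ that makes $\pi,\tilde\pi$ well defined and divergence-free in $h$.

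Second, the commutator you call the main obstacle is harmless. $(u\otimes u)*\varsigma_\epsilon\to u\otimes u$ uniformly, with rate $\epsilon^{\gamma}$ under \eqref{djfkshhgss}. Once the $h$-derivative sits on $\eta$, every term converges uniformly, and the cubic algebra can be carried out on the limit using $\mathrm{div}\,u=0$ in the sense of distributions.

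Third, with the stopping time the right-hand side should be $\mathbb{E}\int_{s\wedge\mathfrak{t}}^{t\wedge\mathfrak{t}}\int\eta:\mathscr{K}\,\mathrm{d}h\,\mathrm{d}\tau$. This is the form actually used in Proposition \ref{LHS-3-4-and-5-4-L-L}.
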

\par
Since the regularity \eqref{djfkshhgss} implies the integrability condition $\mathbb{E}\int_{s \wedge  \mathfrak{t}}^{t \wedge  \mathfrak{t} } {\left\| {u\left( \tau  \right)} \right\|_{{L^3}}^3} \, \mathrm{d}\tau  <  + \infty $, we can define the averaged third-order longitudinal structure function
\begin{equation*}
{S_{\shortparallel}}\left( {t,l} \right) \triangleq \frac{1}{{4\pi }}\int_{{\mathbb{S}^2}} {\int_{{\mathbb{T}^3}} {{{\left( {\left( {u  \left( {t,x + {l \mathbf{n}}} \right) - u\left( {t,x} \right)} \right) \cdot \mathbf{n}} \right)}^3}} \mathrm{d}x} \mathrm{d}S\left( {\mathbf{n}} \right),
\end{equation*}
where ${\mathbb{S}^2}$ denotes the unit sphere with the outward normal vector ${\mathbf{n}}$, and the Kolmogorov scale $l $ is positive. The following result reveals that the strict energy inequality \eqref{SHUDJ-NENH-B-SIDIS} and the integrability condition provide the sufficient condition for establishing the Kolmogorov 4/5 law of the three-dimensional stochastic Euler equations \eqref{BouE1}.
\par
\begin{proposition}\label{LHS-3-4-and-5-4-L-L}
Suppose that the probabilistically strong and analytically weak solution ${u}\left( {t,x} \right)$ of Eq.\eqref{BouE1} satisfies the strict energy inequality \eqref{SHUDJ-NENH-B-SIDIS}. Then, we have
\begin{equation}\label{LHS-5-4-L-L-L}
\mathop {\lim }\limits_{l \to 0}  \mathbb{E} \int_{s \wedge  \mathfrak{t}}^{t \wedge  \mathfrak{t}} {\frac{{S_{\shortparallel}\left( {\tau ,l} \right)}}{l}} \, \mathrm{d}\tau   = - \frac{4}{5}\varepsilon ,\ \  \forall \, 0 \leqslant s < t \leqslant T   ,
\end{equation}
where $2 \varepsilon  = \mathbb{E}\left\| {u \left( s \wedge  \mathfrak{t} \right)} \right\|_{{L^2}}^2 - \mathbb{E}\left\| {u\left( t \wedge  \mathfrak{t} \right)} \right\|_{{L^2}}^2 + {\mathrm{Tr}} \,  Q \cdot \mathbb{E} \left( {t \wedge  \mathfrak{t} - s \wedge  \mathfrak{t}} \right) > 0$.
\end{proposition}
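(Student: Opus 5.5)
We follow the strategy of Bedrossian, Coti Zelati, Punshon-Smith and Weber: insert a one-parameter family of isotropic test functions $\eta_\ell$ into the K\'{a}rm\'{a}n-Howarth-Monin relation of Lemma \ref{KHMR-LHS} and let the scale $\ell \to 0$. The relation is applied on the stopped interval, i.e.\ between $s\wedge\mathfrak{t}$ and $t\wedge\mathfrak{t}$; the stopped version follows from the same It\^{o} computation by optional stopping. Fix radial profiles $\phi,\varphi$ compactly supported in $(0,\infty)$ and set $\eta_\ell(h)=\ell^{-3}\eta(h/\ell)$. In \eqref{KHMR-LHS-1} we then treat three groups of terms separately: the correlation increment $\mathbb{E}\int\eta_\ell:(\Gamma(t\wedge\mathfrak{t},h)-\Gamma(s\wedge\mathfrak{t},h))\,\mathrm{d}h$, the flux term containing $\partial_{h_i}\eta_\ell:\mathscr{D}_i$, and the forcing term containing $\eta_\ell:\mathscr{K}$.

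\textbf{Step 1 (small-scale limits of the correlation and forcing terms).} As $\ell\to0$, $\eta_\ell$ concentrates at $h=0$. By the spatial translation continuity of $u$ in $L^2$, which follows from \eqref{djfkshhgss} via $\|\mathscr{T}_hu-u\|_{L^2}\lesssim C(\gamma)|h|^\gamma$ uniformly in time, we get $\Gamma(\tau,h)\to\Gamma(\tau,0)=\int u\otimes u$ uniformly for $\tau\in[0,T\wedge\mathfrak{t}]$. Hence the first term converges to $\big(\int\eta\big):\big(\mathbb{E}\int u\otimes u\,|_{t\wedge\mathfrak t}-\mathbb{E}\int u\otimes u\,|_{s\wedge\mathfrak t}\big)$. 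We next choose $\phi,\varphi$ so that $\int\eta$ is a multiple of the identity, namely $\int\eta=\big(4\pi\int l^2\phi+\tfrac{4\pi}{3}\int l^2\varphi\big)I$. With this choice the limit involves only $\mathbb{E}\|u\|_{L^2}^2$. In the same way, $\mathscr{K}(h)\to\mathscr{K}(0)$, whose trace is $\sum_j b_j^2\lambda_j^{-4/3}=\mathrm{Tr}\,Q$. The time integral of the forcing term then gives $\mathrm{Tr}\,Q\cdot\mathbb{E}(t\wedge\mathfrak t-s\wedge\mathfrak t)$ times the same constant. Combining the two limits, we obtain $-2\varepsilon\cdot c_\eta$, with $\varepsilon>0$ by the strict inequality \eqref{SHUDJ-NENH-B-SIDIS}.

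\textbf{Step 2 (reduction of the flux term to $S_\shortparallel$).} Write $h=l\mathbf n$, so that $\partial_{h_i}\eta_\ell$ has radial and angular parts. We average $\mathscr{D}_i$ over spheres, using isotropy of the test function and the standard tensor identities: incompressibility together with the $\mathbb{T}^3$-translation average relates the full third-order tensor $\int\delta u\otimes\delta u\,\delta u$ to its longitudinal component. After this averaging, the flux term becomes $\mathbb{E}\int_{s\wedge\mathfrak t}^{t\wedge\mathfrak t}\int_0^\infty \ell^{-4}\,\Theta(l/\ell)\,S_\shortparallel(\tau,l)\,l^2\,\mathrm{d}l\,\mathrm{d}\tau$ for an explicit kernel $\Theta$ built from $\phi,\varphi$. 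Following \cite[Section 3]{MR3943488}, we pick $(\phi,\varphi)$ (first for $S_\shortparallel$ with the combination yielding the factor $4/5$) so that this equals $\int \Theta_0(r)\,\mathbb{E}\int S_\shortparallel(\tau,\ell r)/(\ell r)\,\mathrm{d}\tau\,\mathrm{d}r$ with $\int\Theta_0=1$. Equating with Step 1 gives a weighted-average form of \eqref{LHS-5-4-L-L-L}.

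\textbf{Step 3 (from averaged to pointwise limit in $l$).} The main obstacle is to remove the averaging in $r$ and obtain the genuine limit $l\to0$, while also justifying the $\ell$-limits under the merely weak, H\"{o}lder-regular solution and the stopping time. The integrability $\mathbb{E}\int\|u\|_{L^3}^3<\infty$ guarantees that all terms are finite. To pass to the pointwise limit, we take $\Theta_0$ approximating a Dirac mass at $r=1$, which works if $l\mapsto \mathbb{E}\int S_\shortparallel/l$ is continuous on $(0,\infty)$; this continuity follows from the translation continuity of $u$ in $L^3$. The delicate point is interchanging the two limits $\ell\to0$ and $\Theta_0\to\delta_1$. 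We would handle this by proving that the averaged identity holds for every admissible $\Theta_0$ with an error $o(1)$ that is uniform once $\Theta_0$ has a fixed compact support. Any residual term from two-point correlations at distinct times is controlled by the uniform H\"{o}lder bound \eqref{djfkshhgss}.
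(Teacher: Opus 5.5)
There is a genuine gap in Step 3, and it already originates in how Step 2 is set up. For $\eta=\phi(|h|)I+\varphi(|h|)\tilde h\otimes\tilde h$, the spherical average of $\sum_i\partial_{h_i}\eta:\mathscr{D}_i$ equals $4\pi l^2\big[(\phi'+2\varphi/l)S_0+(\varphi'-2\varphi/l)S_\shortparallel\big]$. Incompressibility does not convert $S_0$ into $S_\shortparallel$, because there is no statistical isotropy here. So a pure $S_\shortparallel$ flux, as you claim, forces $\varphi=-\tfrac12 l\phi'$.

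With this choice and $\phi$ compactly supported in $(0,\infty)$, as Lemma \ref{KHMR-LHS} requires, the kernel multiplying $S_\shortparallel(\tau,\ell r)/(\ell r)$ is $\Theta_0(r)\propto r^3\phi'-r^4\phi''=-r^5(\phi'/r)'$. Hence $\int_0^\infty\Theta_0r^{-5}\,\mathrm{d}r=\int_0^\infty\Theta_0r^{-3}\,\mathrm{d}r=0$ for every admissible choice. This has two consequences:
\begin{enumerate}
\item[(a)] A $\Theta_0$ with fixed compact support can never approach $\delta_1$, since its pairing with $r^{-3}$ would tend to $1$.
\item[(b)] More decisively, every averaged identity of the form you derive is unchanged if $F(l)=\mathbb{E}\int_{s\wedge\mathfrak t}^{t\wedge\mathfrak t}S_\shortparallel(\tau,l)/l\,\mathrm{d}\tau$ is replaced by $F(l)+Al^{-3}+Bl^{-5}$. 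That function is continuous on $(0,\infty)$ but has no finite limit at $0$.
\end{enumerate}
So no uniformity or limit-interchange argument built from averaged identities plus continuity of $F$ can deliver \eqref{LHS-5-4-L-L-L}.

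What is missing is boundary information at $l=0$. Since $|S_0(\tau,l)|+|S_\shortparallel(\tau,l)|\lesssim\|u(\tau)\|_{L^3}^3$, the quantities $l^2S_0$ and $l^4S_\shortparallel$ (after applying $\mathbb{E}\int\mathrm{d}\tau$) vanish as $l\to0$, and this rules out the homogeneous modes $l^{-3},l^{-5}$. Equivalently, you must test with profiles that do not vanish near the origin, such as $\mathbf 1_{[0,l]}$.

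That is the paper's route:
\begin{enumerate}
\item[(1)] It reads the K\'{a}rm\'{a}n--Howarth--Monin relation as distributional ODEs in $l$, one for $\partial_l(l^2S_0)$ and one for $l^{-2}\partial_l(l^4S_\shortparallel)-2lS_0$.
\item[(2)] It integrates them from $0$ to $l$, obtaining the exact formulas \eqref{LHS-3-4-L-L-4} and \eqref{LHS-5-4-L-L-P-3} for each fixed $l>0$.
\item[(3)] It lets $l\to0$ term by term. This needs only translation continuity of $u$ for the $\bar\Gamma$, $\mathfrak J$ and $\mathscr K$ terms, and the 4/3 law \eqref{LHS-3-4-L-L-L} for the $S_0$ term, so no interchange of limits ever occurs.
\end{enumerate}

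Your $S_0$-free combination is a reasonable way to bypass the separate 4/3 law. It does produce the constant $-\tfrac45\varepsilon$, since then $\int\eta=6\pi\int_0^\infty r^2\phi\,\mathrm{d}r\,I$ and $\int_0^\infty(r^3\phi'-r^4\phi'')\,\mathrm{d}r=-15\int_0^\infty r^2\phi\,\mathrm{d}r$. But it only yields the pointwise limit once it is integrated from the origin using this boundary condition.
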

\par
{\noindent}{\bf{Proof}}. 
We first establish the classical Kolmogorov 4/3 law. The Kolmogorov 4/5 law is then derived from the Kolmogorov 4/3 law.
\par
\vspace{1em}
{\textbf{Step 1. Deriving the Kolmogorov 4/3 law}}.
\par
Define the averaged third-order longitudinal structure function
\begin{equation*}
{S_0}\left( {t,l} \right) \triangleq  \frac{1}{{4\pi }}\int_{{\mathbb{S}^2}} \int_{{\mathbb{T}^3}} {{\left| {u  \left( {t,x + {l \mathbf{n}}} \right) - u\left( {t,x} \right)} \right|}^2}  \left( {u  \left( {t,x + {l \mathbf{n}}} \right) - u\left( {t,x} \right)} \right) \cdot \mathbf{n} \mathrm{d}x \mathrm{d}S\left( {\mathbf{n}} \right) .
\end{equation*}
Then, choosing $\eta \left( h \right) = \phi \left( {\left| h \right|} \right)I$ in the K\'{a}rm\'{a}n-Howarth-Monin relationship \eqref{KHMR-LHS-1}, we deduce that
\begin{align}
& \mathbb{E} \int_{{\mathbb{R}^3}} {\phi \left( {\left| h \right|} \right)I :\left( {\Gamma \left( {t \wedge  \mathfrak{t} ,h} \right) - \Gamma   \left( {s \wedge  \mathfrak{t} ,h} \right)} \right)} \mathrm{d}h  + \frac{1}{2} \mathbb{E} \int_{s \wedge  \mathfrak{t}}^{t \wedge  \mathfrak{t}}  {\int_{{\mathbb{R}^3}} {\sum\limits_{i = 1}^3 {\left[ {\frac{\partial }{{\partial {h_i}}}\phi \left( {\left| h \right|} \right)I :{\mathscr{D}_i }\left( {\tau ,h} \right)} \right]} } \, \mathrm{d}h} \mathrm{d}\tau \nonumber \\
= & \mathbb{E} \int_{s \wedge  \mathfrak{t}}^{t \wedge  \mathfrak{t}}  {\int_{{\mathbb{R}^3}} {\phi \left( {\left| h \right|} \right)I:\mathscr{K}\left( h \right)} \, \mathrm{d}h} \mathrm{d}\tau . \label{LHS-3-4-L-L-1}
\end{align}
Denote
\begin{equation*}
{\bar \Gamma} \left( {t,l} \right) \triangleq \frac{1}{{4\pi }}\int_{{\mathbb{S}^2}} {I:{\Gamma }\left( {t,l {\mathbf{n}}} \right)} \mathrm{d}S\left( {\mathbf{n}} \right),
\end{equation*}
and
\begin{equation*}
\mathscr{\bar K}\left( l \right) \triangleq \frac{1}{{4\pi }}\int_{{\mathbb{S}^2}} {\sum\limits_{j \in \mathbb{Z}^+}  {{b_j^2} \lambda _j^{ - \frac{4}{3}  }\left( {{\mathcal{G}_j}\left( x \right),{\mathscr{T}_{l\mathbf{n}}}{\mathcal{G}_j}\left( x \right)} \right)} } \mathrm{d}S\left( {\mathbf{n}} \right).
\end{equation*}
It follows from \eqref{LHS-3-4-L-L-1} that
\begin{align*}
&2 \mathbb{E} \int_{{\mathbb{R}^ + }} {{l^2}\phi \left( l \right)\left( {{{\bar \Gamma} }\left( {t \wedge  \mathfrak{t} ,l} \right) - {{\bar \Gamma} }\left( {s \wedge  \mathfrak{t} ,l} \right)} \right)} \mathrm{d}l + \mathbb{E} \int_{s \wedge  \mathfrak{t}}^{t \wedge  \mathfrak{t}}  {\int_{{\mathbb{R}^ + }} {{l^2}{\partial _l}\phi \left( l \right)S_0\left( {\tau ,l} \right)} } \mathrm{d}l\mathrm{d}\tau   \\
= & 2 \mathbb{E} \int_{s \wedge  \mathfrak{t}}^{t \wedge  \mathfrak{t}}  {\int_{{\mathbb{R}^ + }} {{l^2}\phi \left( l \right)\mathscr{\bar K}\left( l \right)} } \mathrm{d}l\mathrm{d}\tau .
\end{align*}
Thus, the following identity holds in the sense of distributions
\begin{equation*}
2{l^2} \mathbb{E} \left( {{{\bar \Gamma} }\left( {t \wedge  \mathfrak{t} ,l} \right) - {{\bar \Gamma} }\left( {s \wedge  \mathfrak{t} ,l} \right)} \right) - \mathbb{E} \int_{s \wedge  \mathfrak{t}}^{t \wedge  \mathfrak{t}}  {{\partial _l}\left( {{l^2}S_0\left( {\tau ,l} \right)} \right)} \, \mathrm{d}\tau =  2{l^2} \mathbb{E} \int_{s \wedge  \mathfrak{t}}^{t \wedge  \mathfrak{t}}  { \mathscr{\bar K}\left( l \right)} \, \mathrm{d}\tau ,
\end{equation*}
which implies that
\begin{equation}
\mathbb{E} \int_{s \wedge  \mathfrak{t}}^{t \wedge  \mathfrak{t}}  {\frac{{S_0\left( {\tau ,l} \right)}}{l}} \, \mathrm{d}\tau   =   \frac{2}{l^3}\int_0^l {{r^2}\mathbb{E}{\left( {{{\bar \Gamma} }\left( {t \wedge  \mathfrak{t} ,r} \right) - {{\bar \Gamma} }\left( {s \wedge  \mathfrak{t} ,r} \right)} \right)}} \mathrm{d}r  - \frac{2}{l^3} \mathbb{E} \int_{s \wedge  \mathfrak{t}}^{t \wedge  \mathfrak{t}}  {\int_0^l {{r^2}\mathscr{\bar K}\left( r \right)} \mathrm{d}r} \, \mathrm{d}\tau . \label{LHS-3-4-L-L-4}
\end{equation}
\par
Consider the two-point correlation function of the velocity. Since 
\begin{equation*}
\mathbb{E}\left\| {u\left( t \wedge  \mathfrak{t} \right)} \right\|_\mathcal{H}^2 < \mathbb{E}\left\| {u\left( 0 \right)} \right\|_\mathcal{H}^2 + \mathrm{Tr} \, Q  \cdot \mathbb{E} \left( {t \wedge  \mathfrak{t}} \right) < + \infty ,
\end{equation*}
it follows from the H\"{o}lder inequality that
\begin{align}
\frac{2}{{l^3}} \mathbb{E} \left| \int_0^l {{r^2}{\left( {{{\bar \Gamma} }\left( {t \wedge  \mathfrak{t},r} \right) - {{\bar \Gamma} }\left( {t \wedge  \mathfrak{t},0} \right)} \right)}} \mathrm{d}r \right| \lesssim & \frac{2}{l^3}\int_0^{{l}} {{r^2}\mathbb{E}\left| {\left( {{u}\left( t \wedge  \mathfrak{t} \right),{\mathscr{T}_{r\mathbf{n}}}{u} \left( t \wedge  \mathfrak{t} \right) - {u}\left( t \wedge  \mathfrak{t} \right)} \right)} \right|} \mathrm{d}r  \nonumber \\
\lesssim & \frac{2}{{l^3}}\int_0^l {{r^2}{{\left( {\mathbb{E}\left\| {{u}\left( t \wedge  \mathfrak{t} \right) - {\mathscr{T}_{r\mathbf{n}}}{u}\left( t \wedge  \mathfrak{t} \right)} \right\|_{\mathcal{H}}^2} \right)}^{\frac{1}{2}}}} \mathrm{d}r    . \label{MNCYDHFU-1}
\end{align}
Using the translation continuity of ${u} \in \mathcal{H}$, we have
\begin{equation*}
{\left\| {u \left( {t \wedge \mathfrak{t}} \right) - {\mathscr{T}_{r{\mathbf{n}}}}u \left( {t \wedge \mathfrak{t}} \right)} \right\|_\mathcal{H}} \to 0 , \ \ \mbox{as}\ r \to 0 , \ \mathbb{P}\mbox{-a.s.}
\end{equation*}
Applying the dominated convergence theorem, we conclude that
\begin{equation}
\mathbb{E}\left\| {u \left( {t \wedge \mathfrak{t}} \right) - {\mathscr{T}_{r{\mathbf{n}}}}u \left( {t \wedge \mathfrak{t}} \right)} \right\|_\mathcal{H}^2 \to 0 , \ \ \mbox{as}\ r \to 0. \label{MNCYDHFU-2}
\end{equation}
Combining \eqref{MNCYDHFU-1} and \eqref{MNCYDHFU-2}, we obtain
\begin{align}
& \mathop {\lim }\limits_{l \to 0}\frac{2}{{l^3}} \mathbb{E} \left| {\int_0^l {{r^2}{\left( {{{\bar \Gamma} }\left( {t \wedge  \mathfrak{t} ,r} \right) - {{\bar \Gamma} }\left( {t \wedge  \mathfrak{t},0} \right)} \right)}} \mathrm{d}r} \right| \nonumber \\
\lesssim & \mathop {\lim }\limits_{l \to 0} \mathop {\sup }\limits_{r \in \left[ {0,l} \right]} {\left( {\mathbb{E}\left\| {u \left( {t \wedge \mathfrak{t}} \right) - {\mathscr{T}_{r{\mathbf{n}}}}u \left( {t \wedge \mathfrak{t}} \right)} \right\|_\mathcal{H}^2} \right)^{\frac{1}{2}}} = 0   , \ \ \forall \, t \in \left[ {0,T} \right] .  \label{MNCYDHFU-3}
\end{align}
\par
Thanks to \eqref{MNCYDHFU-3}, we have
\begin{align}
\mathop {\lim }\limits_{l \to 0} \frac{2}{{l^3}}\int_0^l {{r^2}\mathbb{E}\left( {{{\bar \Gamma} }\left( {t \wedge  \mathfrak{t},r} \right) - {{\bar \Gamma} }\left( {s \wedge  \mathfrak{t},r} \right)} \right)} \mathrm{d}r = & \mathop {\lim }\limits_{l \to 0} \frac{2}{{l^3}}\int_0^l {{r^2}\mathbb{E}\left( {{{\bar \Gamma} }\left( {t \wedge  \mathfrak{t},r} \right) - {{\bar \Gamma} }\left( {t \wedge  \mathfrak{t},0} \right)} \right)} \mathrm{d}r \nonumber \\
& + \mathop {\lim }\limits_{l \to 0} \frac{2}{{l^3}}\int_0^l {{r^2}\mathbb{E}\left( {{{\bar \Gamma} }\left( {t \wedge  \mathfrak{t},0} \right) - {{\bar \Gamma} }\left( {s \wedge  \mathfrak{t},0} \right)} \right)} \mathrm{d}r  \nonumber \\
& +  \mathop {\lim }\limits_{l \to 0} \frac{2}{{l^3}}\int_0^l {{r^2}\mathbb{E}\left( {{{\bar \Gamma} }\left( {s \wedge  \mathfrak{t},0} \right) - {{\bar \Gamma} }\left( {s \wedge  \mathfrak{t},r} \right)} \right)} \mathrm{d}r   \nonumber \\
= & \frac{2}{{3}}\left( { \mathbb{E}\left\| {{u}\left( t \wedge  \mathfrak{t} \right)} \right\|_{\mathcal{H}}^2  -  \mathbb{E}\left\| {{u\left( s \wedge  \mathfrak{t} \right)}} \right\|_{\mathcal{H}}^2} \right). \label{LHS-3-4-L-L-9} 
\end{align}
Using the same argument as \eqref{LHS-3-4-L-L-9}, it is straightforward to derive that
\begin{equation}\label{LHS-3-4-L-L-7sdf5s}
\mathop {\lim }\limits_{l \to 0} \frac{2}{{{l^3}}} \mathbb{E} \int_{s \wedge  \mathfrak{t}}^{t \wedge  \mathfrak{t}}  {\int_0^l {{r^2} \mathcal{\bar K}\left( r \right)} {\text{d}}r} \, {\text{d}}\tau  = \frac{2}{3}  {\mathrm{Tr}} \,  Q \cdot \mathbb{E} \left( {t \wedge  \mathfrak{t} - s \wedge  \mathfrak{t}} \right) .
\end{equation}
Therefore, it follows from \eqref{LHS-3-4-L-L-4}, \eqref{LHS-3-4-L-L-9}, \eqref{LHS-3-4-L-L-7sdf5s} and the strict energy inequality \eqref{SHUDJ-NENH-B-SIDIS} that
\begin{align}
\mathop {\lim }\limits_{l \to 0} \mathbb{E} \int_{s \wedge  \mathfrak{t}}^{t \wedge  \mathfrak{t}}  {\frac{{S_0\left( {\tau ,l} \right)}}{l}} \, \mathrm{d}\tau  = &  - \frac{4}{{3}}\left( {\frac{1}{2}\mathbb{E}\left\| {{u}\left( s \wedge  \mathfrak{t} \right)} \right\|_{\mathcal{H}}^2 - \frac{1}{2}\mathbb{E}\left\| {{u}\left( t \wedge  \mathfrak{t} \right)} \right\|_{\mathcal{H}}^2  + \frac{1}{2}  {\mathrm{Tr}} \,  Q \cdot \mathbb{E} \left( {t \wedge  \mathfrak{t} - s \wedge  \mathfrak{t}} \right) } \right) \nonumber \\
= & - \frac{4}{3}{\varepsilon } .  \label{LHS-3-4-L-L-L}
\end{align}
\par
\vspace{1em}
{\textbf{Step 2. Verification of the Kolmogorov 4/5 law \eqref{LHS-5-4-L-L-L}}}. 
\par
Picking $\eta \left( h \right) = \varphi \left( {\left| h \right|} \right){\mathbf{n} \otimes \mathbf{n}}$ in the K\'{a}rm\'{a}n-Howarth-Monin relationship \eqref{KHMR-LHS-1}, we have
\begin{align*}
& \mathbb{E} \int_{{\mathbb{R}^3}} {\left( {\varphi \left( {\left| h \right|} \right){\mathbf{n} \otimes \mathbf{n}}} \right):\left( {{\Gamma }\left( {t \wedge  \mathfrak{t},h} \right) - {\Gamma }\left( {s \wedge  \mathfrak{t},h} \right)} \right)} \mathrm{d}h   \\
& + \frac{1}{2}\mathbb{E} \int_{s \wedge  \mathfrak{t}}^{t \wedge  \mathfrak{t}}  {\int_{{\mathbb{R}^3}} {\sum\limits_{i = 1}^3 {{\partial _{{h_i}}}\left( {\varphi \left( {\left| h \right|} \right){\mathbf{n} \otimes \mathbf{n}}} \right):\mathscr{D}_i^{{u}}} } \mathrm{d}h} \, \mathrm{d}\tau   \\
= & \mathbb{E} \int_{s \wedge  \mathfrak{t}}^{t \wedge  \mathfrak{t}}  {\int_{{\mathbb{R}^3}} {\left( {\varphi \left( {\left| h \right|} \right){\mathbf{n} \otimes \mathbf{n}}} \right):\mathscr{K}\left( h \right)} \mathrm{d}h} \, \mathrm{d}\tau . 
\end{align*}
Denote
\begin{equation*}
{\mathfrak{J}}\left( {t,l} \right) \triangleq \int_{{\mathbb{T}^3}} {\int_{{\mathbb{S}^2}} {\left( {\mathbf{n} \otimes \mathbf{n}} \right):\left( {{u}\left( {t,x} \right) \otimes {\mathscr{T}_{l\mathbf{n}}}{u}\left( {t,x} \right)} \right)} \mathrm{d}S\left( {\mathbf{n}} \right)} \, \mathrm{d}x ,
\end{equation*}
and
\begin{equation*}
\mathscr{\tilde K}\left( l \right) \triangleq \frac{1}{{4\pi }}\int_{{\mathbb{T}^3}} {\int_{{\mathbb{S}^2}} {\sum\limits_{j \in \mathbb{Z}^+}  {{b_j^2} \lambda _j^{ - \frac{4}{3} }\left( {\mathbf{n} \otimes \mathbf{n}} \right):{\left( {{\mathcal{G}_j}\left( x \right) \otimes {\mathscr{T}_{l\mathbf{n}}}{\mathcal{G}_j}\left( x \right)} \right)}} } \mathrm{d}S\left( {\mathbf{n}} \right)} \, \mathrm{d}x.
\end{equation*}
We derive that
\begin{align*}
& \mathbb{E} \int_{s \wedge  \mathfrak{t}}^{t \wedge  \mathfrak{t}}  {\int_{{\mathbb{R}^ + }} {\left( {{l^2}{\partial _l}\varphi \left( l \right) - 2l\varphi \left( l \right)} \right)S_{\shortparallel}\left( {\tau ,l} \right)} \mathrm{d}l} \, \mathrm{d}\tau \\
= & 2\mathbb{E} \int_{s \wedge  \mathfrak{t}}^{t \wedge  \mathfrak{t}}  {\int_{{\mathbb{R}^ + }} {{l^2}\varphi \left( l \right) \mathscr{\tilde K}\left( l \right)} \mathrm{d}l} \, \mathrm{d}\tau - 2\mathbb{E} \int_{s \wedge  \mathfrak{t}}^{t \wedge  \mathfrak{t}}  {\int_{{\mathbb{R}^ + }} {l\varphi \left( l \right)S_0\left( {\tau ,l} \right)} \mathrm{d}l} \, \mathrm{d}\tau \\
&  - \frac{1}{{2\pi }} \mathbb{E} \int_{{\mathbb{R}^ + }} {{l^2}{\varphi \left( l \right)}\left( {{\mathfrak{J}}\left( {t \wedge  \mathfrak{t},l} \right) - {\mathfrak{J}}\left( {s \wedge  \mathfrak{t},l} \right)} \right)} \mathrm{d}l .
\end{align*}
Notice that ${l^2}{\partial _l}\varphi \left( l \right) - 2l\varphi \left( l \right) = {l^4}{\partial _l}\left( {{l^{ - 2}}\varphi \left( l \right)} \right)$. Thus, the following identity holds in the sense of distributions
\begin{align*}
& - \mathbb{E} \int_{s \wedge  \mathfrak{t}}^{t \wedge  \mathfrak{t}}  {{l^{ - 2}}{\partial _l}\left( {{l^4} S_{\shortparallel}\left( {\tau ,l} \right)} \right)} \, \mathrm{d}\tau + 2 \mathbb{E} \int_{s \wedge  \mathfrak{t}}^{t \wedge  \mathfrak{t}}  {lS_0\left( {\tau ,l} \right)} \, \mathrm{d}\tau  \\
= &  2 \mathbb{E} \int_{s \wedge  \mathfrak{t}}^{t \wedge  \mathfrak{t}}  {{l^2}\mathscr{\tilde K}\left( l \right)} \, \mathrm{d}\tau - \frac{{l^2}}{{2\pi }}\mathbb{E} \left( {{\mathfrak{J}}\left( {t \wedge  \mathfrak{t},l} \right) - {\mathfrak{J}}\left( {s \wedge  \mathfrak{t},l} \right)} \right)  .
\end{align*}
Consequently,
\begin{align}
\mathbb{E} \int_{s \wedge  \mathfrak{t}}^{t \wedge  \mathfrak{t}}  {\frac{{S_{\shortparallel}\left( {\tau ,l} \right)}}{l}} \, \mathrm{d}\tau  = & \frac{1}{{2\pi {l^5}}}\int_0^l {{r^4}\mathbb{E}\left( {{\mathfrak{J}}\left( {t \wedge  \mathfrak{t},r} \right) - {\mathfrak{J}}\left( {s \wedge  \mathfrak{t},r} \right)} \right)} \mathrm{d}r \nonumber \\
& + \frac{2}{{l^5}}\mathbb{E} \int_{s \wedge  \mathfrak{t}}^{t \wedge  \mathfrak{t}}  {\int_0^l {{r^3}S_0\left( {\tau ,r} \right)} \mathrm{d}r} \, \mathrm{d}\tau - \frac{2}{{l^5}} \mathbb{E} \int_{s \wedge  \mathfrak{t}}^{t \wedge  \mathfrak{t}}  {\int_0^l {{r^4}\mathscr{\tilde K}\left( r \right)} \mathrm{d}r} \, \mathrm{d}\tau . \label{LHS-5-4-L-L-P-3}
\end{align}
\par
Using the same argument as \eqref{MNCYDHFU-3}, we obtain
\begin{equation*}
\mathop {\lim }\limits_{l \to 0} \mathbb{E} \left| {\frac{1}{{2\pi {l^5}}}\int_0^l {{r^4}\left( {{\mathfrak{J}}\left( {t \wedge  \mathfrak{t},0} \right) - {\mathfrak{J}}\left( {t \wedge  \mathfrak{t},r} \right)} \right)} \mathrm{d}r} \right| = 0 , \ \ \forall \, t \in \left[ {0,T } \right].
\end{equation*}
Then
\begin{align}
\mathop {\lim }\limits_{l \to 0} \frac{1}{{2\pi {l^5}}}\int_0^l {{r^4}\mathbb{E}\left( {{\mathfrak{J}}\left( {t \wedge  \mathfrak{t} ,r} \right) - {\mathfrak{J}}\left( {s \wedge  \mathfrak{t},r} \right)} \right)} \mathrm{d}r  = & \mathop {\lim }\limits_{l \to 0} \frac{1}{{2\pi {l^5}}}\int_0^l {{r^4}\mathbb{E}\left( {{\mathfrak{J}}\left( {t \wedge  \mathfrak{t},r} \right) - {\mathfrak{J}}\left( {t \wedge  \mathfrak{t},0} \right)} \right)} \mathrm{d}r  \nonumber \\
& + \mathop {\lim }\limits_{l \to 0} \frac{1}{{2\pi {l^5}}}\int_0^l {{r^4}\mathbb{E}\left( {{\mathfrak{J}}\left( {t \wedge  \mathfrak{t},0} \right) - {\mathfrak{J}}\left( {s \wedge  \mathfrak{t},0} \right)} \right)} \mathrm{d}r \nonumber \\
& + \mathop {\lim }\limits_{l \to 0} \frac{1}{{2\pi {l^5}}}\int_0^l {{r^4}\mathbb{E}\left( {{\mathfrak{J}}\left( {s \wedge  \mathfrak{t},0} \right) - {\mathfrak{J}}\left( {s \wedge  \mathfrak{t},r} \right)} \right)} \mathrm{d}r  \nonumber \\
= & \frac{2}{{15}}\left( {\mathbb{E}\left\| {{u}\left( t \wedge  \mathfrak{t} \right)} \right\|_{\mathcal{H}}^2 - \mathbb{E}\left\| {{u } \left( s \wedge  \mathfrak{t} \right)} \right\|_{\mathcal{H}}^2} \right) . \label{LHS-5-4-L-L-P-7} 
\end{align}
Invoking the Kolmogorov 4/3 law \eqref{LHS-3-4-L-L-L}, we have
\begin{equation}\label{LHS-5-4-L-L-P-8}
\mathop {\lim }\limits_{l \to 0} \frac{2}{{l^5}}\mathbb{E} \int_{s \wedge  \mathfrak{t}}^{t \wedge  \mathfrak{t}}  {\int_0^l {{r^3}S_0\left( {\tau ,r} \right)} \mathrm{d}r} \, \mathrm{d}\tau  = - \frac{8}{{15}}\varepsilon   .
\end{equation}
Furthermore, 
\begin{equation}\label{LHS-5-4-La1245fasd}
\mathop {\lim }\limits_{l \to 0} \frac{2}{{{l^5}}}\mathbb{E}  \int_{s \wedge  \mathfrak{t}}^{t \wedge  \mathfrak{t}}  {\int_0^l {{r^4} \mathcal{\tilde K}\left( r \right)} {\text{d}}r} \, {\text{d}}\tau  = \frac{2}{{15}} {\mathrm{Tr}} \,  Q \cdot \mathbb{E} \left( {t \wedge  \mathfrak{t} - s \wedge  \mathfrak{t}} \right)  .
\end{equation}
Consequently, it follows from \eqref{LHS-5-4-L-L-P-3}-\eqref{LHS-5-4-La1245fasd} and the strict energy inequality \eqref{SHUDJ-NENH-B-SIDIS} that
\begin{align*}
\mathop {\lim }\limits_{l \to 0} \mathbb{E} \int_{s \wedge  \mathfrak{t}}^{t \wedge  \mathfrak{t}}  {\frac{{S_{\shortparallel}\left( {\tau ,l} \right)}}{l}} \, \mathrm{d}\tau  = & - \frac{4}{5}\left( {\frac{1}{2}\mathbb{E}\left\| {{u}\left( s \wedge  \mathfrak{t} \right)} \right\|_{\mathcal{H}}^2 - \frac{1}{2}\mathbb{E}\left\| {{u}\left( t \wedge  \mathfrak{t} \right)} \right\|_{\mathcal{H}}^2  + \frac{1}{2} {\mathrm{Tr}} \,  Q \cdot \mathbb{E} \left( {t \wedge  \mathfrak{t} - s \wedge  \mathfrak{t}} \right) } \right) \\
= & - \frac{4}{5}\varepsilon .
\end{align*}
The proof is complete. $\hfill\square$
%%%%%%%%%%%%%%%%%%%%%%%%%%%%%%%%%%%%%%%%%%%%%
\subsection{The proof of nonexistence in Theorem \ref{CTSE-O-C}}
Our proof proceeds by contradiction. We assume that Eq.\eqref{BouE1} possess analytically weak solutions satisfying the regularity \eqref{djfkshhgss} and the strict energy inequality \eqref{SHUDJ-NENH-B-SIDIS}. Denote
\begin{equation*}
{{\tilde S}} \left( {t,l} \right) \triangleq  \int_{{\mathbb{S}^2}} {\int_{{\mathbb{T}^3}} {{{\left| {\left( {{\mathscr{T}_{l\mathbf{n}}}{u}\left( {t ,x} \right) - {u}\left( {t ,x} \right)} \right) } \right|}^3}} \, \mathrm{d}x} \mathrm{d}S\left( {\mathbf{n}} \right) .
\end{equation*}
It follows from the Kolmogorov 4/5 law \eqref{LHS-5-4-L-L-L} that
\begin{equation*}
\mathop {\lim }\limits_{l \to 0}  \frac{1}{l}\mathbb{E} \int_{s \wedge  \mathfrak{t}}^{t \wedge  \mathfrak{t}}  {S_{\shortparallel}\left( {\tau ,l} \right)} \, \mathrm{d}\tau  = - \frac{4}{5} \varepsilon  ,
\end{equation*}
which implies that
\begin{equation*}
\mathop {\lim \inf}\limits_{l \to 0} \frac{1}{l}\mathbb{E} \int_{s \wedge  \mathfrak{t}}^{t \wedge  \mathfrak{t}}  {\left| {{{\tilde S}} \left( {\tau ,l} \right)} \right|} \, \mathrm{d}\tau \geqslant C \varepsilon  .
\end{equation*}
Hence,
\begin{equation}
\mathop {\lim \inf}\limits_{l \to 0}   \ln \left( {\frac{1}{l} \mathbb{E} \int_{s \wedge  \mathfrak{t}}^{t \wedge  \mathfrak{t}}  {{{\tilde S}} \left( {\tau,l} \right)} \, \mathrm{d}\tau} \right) \geqslant   \ln \left( {C \varepsilon } \right). \label{xinshdkj-4}
\end{equation}
\par
Owing to \eqref{djfkshhgss}, the velocity field ${u}\left( {t,x} \right)$ is bounded in $C\left( {\left[ {0,T \wedge {\mathfrak{t}}} \right];{C^\gamma }\left( {{\mathbb{T}^3}} \right)} \right)$ with probability $1$. Then, we have
\begin{equation*}
\mathop {\sup }\limits_{x \in {\mathbb{T}^3}} \int_{s \wedge  \mathfrak{t}}^{t \wedge  \mathfrak{t}}  {\frac{{{{\left| {{\mathscr{T}_{l\mathbf{n}}}{u}\left( {\tau,x} \right) - {u}\left( {\tau ,x} \right)} \right|}^3}}}{{{l^{3\gamma }}}}} \, \mathrm{d}\tau  \leqslant C \int_{s \wedge  \mathfrak{t}}^{t \wedge  \mathfrak{t}}  {\left\| {{u}\left( \tau  \right)} \right\|_{{\mathcal{C}^\gamma }}^3} \, \mathrm{d}\tau  \leqslant C  , \ \  \mathbb{P}\mbox{-a.s.},
\end{equation*}
which implies that
\begin{equation*}
\mathbb{E} \int_{s \wedge  \mathfrak{t}}^{t \wedge  \mathfrak{t}}  {{{\tilde S}} \left( {\tau,l} \right)} \, \mathrm{d}\tau  \leqslant C  {l^{3\gamma }} ,
\end{equation*}
Therefore,
\begin{equation}
 \ln \left( {\frac{1}{l} \mathbb{E} \int_{s \wedge  \mathfrak{t}}^{t \wedge  \mathfrak{t}}  { {{\tilde S}} \left( {\tau,l} \right)} \, \mathrm{d}\tau} \right) \leqslant \left( {3\gamma  - 1} \right) \ln l + \ln \left( {C \left| {{\mathbb{T}^3}} \right|} \right) ,\ \ \forall \ l \in \left( {0,1} \right) .\label{CTSE-O-C-p-2}
\end{equation}
It follows from \eqref{xinshdkj-4} and \eqref{CTSE-O-C-p-2} that
\begin{equation*}
\gamma  \leqslant \frac{1}{3} ,
\end{equation*}
which contradicts the condition $\gamma  > \frac{1}{3} $. The proof is complete.  $\hfill\square$
%%%%%%%%%%%%%%%%%%%%%%%%%%%%%%%%%%%%%%%%%%%%%%
%%%%%%%%%%%%%%%%%%%%%%%%%%%%%%%%%%%%%%%%%%%%%%
%%%%%%%%%%%%%%%%%%%%%%%%%%%%%%%%%%%%%%%%%%%%%%
%%%%%%%%%%%%%%%%%%%%%%%%%%%%%%%%%%%%%%%%%%%%%%
\section*{Declarations}
\noindent \textbf{Acknowledgements}: This work is supported by NSFC (Grants 12531009 and 12571183).

\vspace{2mm}
%\noindent \textbf{Author Contributions}:

%{\textbf{Zhenxin Liu}}: equal contribution; editing, review, methodology, investigation, suggestions and improvements. 

%{\textbf{Wenhu Zhong}}: equal contribution; writing—original draft, methodology, conceptualization, investigation and formal analysis.

\noindent \textbf{Conflict of Interest Statement}: The authors have no conflicts to disclose.

\vspace{2mm}

\noindent \textbf{Data Availability}: No data was used for the research described in the article.

%%%%%%%%%%%%%%%%%%%%%%%%%%%%%%%%%%%%%%%%%%%%%%
%%%%%%%%%%%%%%%%%%%%%%%%%%%%%%%%%%%%%%%%%%%%%%
%%%%%%%%%%%%%%%%%%%%%%%%%%%%%%%%%%%%%%%%%%%%%%

\end{document}